\renewcommand{\leq}{\leqslant}
\renewcommand{\geq}{\geqslant}
\theoremstyle{plain}
\newtheorem{theorem}{Theorem}[section]
\newtheorem{corollary}[theorem]{Corollary}
\newtheorem{lemma}[theorem]{Lemma}
\newtheorem{proposition}[theorem]{Proposition}
\theoremstyle{definition}
\newtheorem{definition}[theorem]{Definition}
\newtheorem{example}[theorem]{Example}
\newtheorem{conjecture}[theorem]{Conjecture}
\newtheorem{remark}[theorem]{Remark}
\newtheorem{problem}[theorem]{Problem}
\numberwithin{equation}{subsection}
\newtheorem{question}[theorem]{Question}
\newcommand{\R}[1]{R^{1/p^{#1}}}
\newcommand{\m}{\mathfrak{m}}
\newcommand{\n}{\mathfrak{n}}
\newcommand{\fpt}{{\operatorname{fpt}}}
\newcommand{\RR}{\mathbb{R}}
\newcommand{\NN}{\mathbb{Z}_{\geq 0}}
\newcommand{\ZZ}{\mathbb{Z}}
\newcommand{\QQ}{\mathbb{Q}}
\newcommand{\KK}{\mathbb{k}}
\newcommand{\IN}{\operatorname{in}}
\newcommand{\fp}{\mathfrak{p}}
\newcommand{\cP}{\mathcal{P}}
\newcommand{\Cech}{ \check{\rm{C}}}
\newcommand{\reg}{\operatorname{reg}}
\newcommand{\pd}{\operatorname{pd}}
\newcommand{\sdim}{\operatorname{sdim}}
\newcommand{\Depth}{\operatorname{depth}}
\newcommand{\Hom}{\operatorname{Hom}}
\newcommand{\gr}{\operatorname{gr}}
\newcommand{\Ext}{\operatorname{Ext}}
\newcommand{\MaxSpec}{\operatorname{MaxSpec}}
\newcommand{\Supp}{\operatorname{Supp}}
\newcommand{\Ker}{\operatorname{Ker}}
\newcommand{\depth}{\operatorname{depth}}	
\newcommand{\Min}{\operatorname{min}}
\newcommand{\HT}{\operatorname{height}}
\renewcommand{\k}{\mathbb{k}}
\def \cI{\mathcal I}
\def \I{\mathcal I}
\def \R{\mathcal R}
\DeclareMathOperator{\cone}{cone}
\newcommand{\ls}{\leqslant}%
\newcommand{\gs}{\geqslant}
\newcommand{\p}{\mathfrak{p}}
\newcommand{\cd}{\operatorname{cd}}
\newcommand{\ara}{\operatorname{ara}}
\author[A. De Stefani]{Alessandro De Stefani$^{*}$}
\address{Dipartimento di Matematica, Universit{\`a} degli Studi di Genova, Via Dodecaneso 35, 16146 Genova, Italy}
\email{destefani@dima.unige.it}
\thanks{$^{*}$ The fist author was partially supported by the PRIN 2020 project 2020355B8Y ``Squarefree Gr{\"o}bner degenerations, special varieties and related topics''}
\author[J. Monta{\~n}o]{Jonathan Monta{\~n}o$^{**}$}
\address{Department of Mathematical Sciences, New Mexico State University, PO Box 30001, Las Cruces, NM 88003-8001}
\email{jmon@nmsu.edu}
\thanks{$^{**}$ The second author is  supported by NSF Grant DMS \#2001645.}
\author[L. N{\'u}{\~n}ez-Betancourt]{Luis N{\'u}{\~n}ez-Betancourt$^{***}$}
\address{Centro de Investigaci{\'o}n en Matem{\'a}ticas, Guanajuato, Gto., M{\'e}xico}
\email{luisnub@cimat.mx}
\thanks{$^{***}$ The third author was partially supported by  CONACyT Grant 284598 and C\'atedras Marcos Moshinsky.}
\subjclass[2010]{Primary ; Secondary }
\keywords{}
\begin{document}

\title{ Frobenius methods in combinatorics}

\dedicatory{Dedicated to Professor Rafael H. Villarreal  on the occasion of his seventieth birthday.}

\begin{abstract} 
We survey results produced from the  interaction between  methods in prime characteristic and  combinatorial commutative algebra. 
We showcase results for edge ideals, toric varieties, Stanley-Reisner rings, and initial ideals that were proven via Frobenius. 
We also discuss results for monomial ideals obtained using Frobenius-like maps.
Finally, we present results for $F$-pure rings that were inspired by work done for Stanley-Reisner rings.
\end{abstract}

\maketitle
\setcounter{secnumdepth}{1}
\setcounter{tocdepth}{1}
 \tableofcontents

\section{Introduction}\label{SecIntro}

The interplay between combinatorics and commutative  algebra 
has been a successful and fruitful interlinkage for both areas \cite{StanleyV1,StanleyV2,StanleyCCA,MSAlgComb}. In this survey, 
we focus on the interaction between  methods in prime characteristic and combinatorial 
commutative algebra.

We showcase several combinatorial results that were proven using methods of prime characteristic. 
For example, we discuss several properties of the ordinary and symbolic powers of determinantal ideals (see Section \ref{AsympSec}).
We also present the characterization of Gorenstein binomial edge ideals obtained  by Gonz\'alez-Mart\'inez \cite{GMBEI} (see Theorem \ref{Thm Gor BEI}). 
We  give a prime characteristic proof of  a result by Sturmfels  on the regularity of normal toric varieties. This proof relies on the Hochster-Roberts Theorem \cite{HRFpurity} regarding $a$-invariants of graded $F$-pure rings.

Stanley-Reisner rings are one of the main bridges between combinatorics and commutative algebra. They are also one of the main examples of $F$-pure rings. One idea that has inspired recent research, and that we showcase in this survey, is that ``a property that holds for Stanley-Reisner rings is likely to hold for $F$-pure rings''. For instance,
after it was shown that squarefree monomial ideals satisfy Harbourne's Conjecture (Conjecture \ref{ConjHarbourne}), Grifo and Huneke \cite{GH} proved that this is also satified for ideals defining $F$-pure rings (see Theorem \ref{ThmGH}). Yanagawa \cite{YStr} gave a formula of the Lyubeznik numbers in terms of the dimension of the zero degree part of certain $\Ext$ modules (see Theorem \ref{thm Lyubeznik}). The same formula was later obtained by Grifo, the first and third authors for $F$-pure rings \cite{DSGNB} (see Theorem \ref{thm Lyubeznik}).
In the other direction, there are results that have been obtained for Stanley-Reisner rings in all characteristic using  Frobenius-like morphisms. In particular, the limit of depths and normalized regularities are shown to exist for symbolic powers of these ideals. 
In Section \ref{AsympSec} we discuss these methods and include generalizations of the known results.  In this section we also survey the results in the literature regarding the limits mentioned above and highlight the open questions that remain on this topic.

There are results where the interactions of combinatorial commutative algebra and methods in prime characteristic have gone full circle.
Musta\c{t}\v{a} \cite{MustataLC} showed that if $I$ is a squarefree monomial ideal in a polynomial ring $S=\KK[x_1,\ldots,x_d]$, then the natural map $\Ext^i_S(S/I,S) \to H^i_I(S)$ is injective for all $i \in \ZZ$ (see Corollary \ref{coroll Mustata CF}). This fact has useful consequences on the projective dimension of this type of ideals. Later Singh and Walther \cite{AnuragUliPure} showed that this property in fact holds for ideals defining $F$-pure rings. 
Ma and Quy coined the term $F$-full for a local ring $(R,\m,\KK)$ such that the image of the Frobenius map on $H^i_\m(R)$ generates 
$H^i_\m(R)$ as an $R$-module. It turns out that if $S/I$ is $F$-full, then the natural map $\Ext^i_S(S/I,S) \to H^i_I(S)$ is injective for all $i \in \ZZ$ (see Proposition \ref{propCF}).
In fact, Singh and Walther's proof \cite{AnuragUliPure} gives that every $F$-pure rings is $F$-full (see Theorem \ref{thm Fsplit is CF}, and also \cite{MaFiniteness}). This inspired Dao, Ma and the first author \cite{DDSM} to develop the theory of cohomologically full rings (see Definition \ref{DefnCF}). This theory was employed by Conca and Varbaro \cite{CV} to show that the extremal Betti numbers of a squarefree Gr{\"o}bner deformation coincide with those of the original ideal (see Theorem \ref{thm Conca Varbaro}). 

We point out that this survey is not comprehensive. For instance, we did not include recent connections of combinatorial commutative algebra with code theory \cite{MBPV}, where there are results that were obtained using $F$-pure rings \cite{NBPV}.

\section{Background}

Throughout this paper, $R$ always denotes a Noetherian commutative ring with identity.

\subsection{Graded algebras}

A $\NN$-graded ring is a ring $R$ which admits a direct sum decomposition $R= \bigoplus_{n \gs 0} R_n$ of Abelian groups, with $R_{i} \cdot R_{j} \subseteq R_{i+j}$ for all $i$ and $j$. Note that $R_0$ is a Noetherian commutative ring with identity, and $R$ is an $R_0$-algebra.
In this setup, let $M = \bigoplus_{n\in \ZZ} M_n$ and $N=\bigoplus_{n \in \ZZ} N_n$ be graded $R$-modules. An $R$-homomorphism $\varphi:M \to N$ is called {\it homogeneous of degree $c$} if $\varphi(M_n) \subseteq N_{n+c}$ for all $n \in \ZZ$. The set of all graded homomorphisms $M \to N$ of all degrees form a graded submodule of $\Hom_R(M,N)$. In general, it can be a proper submodule, but it coincides with $\Hom_R(M,N)$ when $M$ is finitely generated \cite{BrHe}. 

Given a $\NN$-graded ring $R$, there exist $f_1,\ldots, f_r \in R$  homogeneous elements such that $R= R_0[f_1,\ldots, f_r]$, which is equivalent to  $\oplus_{n>0}R_n=(f_1,\ldots, f_r)$ \cite[Proposition 1.5.4]{BrHe}. If $R_0$ is local, or $\NN$-graded over a field, choosing the elements $f_1,\ldots,f_r$ minimally gives rise to a unique set of integers $\{d_1,\ldots, d_r\}$, namely the degrees of such elements. We call these numbers the {\it generating degrees of $R$ as an $R_0$-algebra}. 

Let $S=R_0[y_1,\ldots, y_r]$ be a polynomial ring over $R_0$ with $\deg(y_i)=d_i$ for  $1\ls i\ls r$, and let $\phi: S\to R$ be an $R_0$-algebra homomorphism defined by $\phi(y_i)=f_i$ for  $1\ls i\ls r$. Consider the ideal $\cI=\Ker(\phi)$. We call any minimal set of homogeneous generators of $\cI$ the {\it defining equations of $R$} over $R_0$.

 
\subsection{Stanley-Reisner rings and monomial edge ideals}\label{SubsecSR}

In this subsection, we recall the basic notions of Stanley-Reisner theory. For  more details we refer to a survey  \cite{SRsurvey} and a book \cite{MSAlgComb} on this subject. We also refer to Villarreal's book on monomial algebras for this and related topics \cite{MonomialBook}.

\begin{definition} 
A  {\it simplicial complex} on  $[d]$ is a collection $\Delta$ of subsets, called {\it faces}, of $[d]$ such that for given  $\sigma \in \Delta$, if $\theta \subseteq \sigma$, then $\theta \in \Delta$. A  {\it facet} is a face that is maximal under inclusion.
The {\it dimension of a face} $\sigma \in \Delta$ is $|\sigma|-1$, and the
 {\it dimension of} $\Delta$ is 
 $\max\{\dim(\sigma)\; | \; \sigma\in\Delta\}$.
\end{definition}

\begin{definition}\label{DefHvectorSR} 
The   $f${\it -vector} of a simplicial complex $\Delta$ of dimension $r-1$ is defined by
$$
f(\Delta)=(f_{-1}(\Delta),\ldots, f_{r-1}(\Delta)),
$$
where $f_i(\Delta)$ denotes the number of faces in $\Delta$ of dimension $i$.
The {\it $h$-vector of }$\Delta$ is defined by
$$
h(\Delta)=(h_{0}(\Delta),\ldots, h_{r}(\Delta)),
$$
where $h_i(\Delta)$ is given by the identity
$$
\sum^r_{i=0} f_{i-1}(t-1)^{r-i}=\sum^r_{i=0} h_i(\Delta)t^{r-i}.
$$
\end{definition}

Let $S =\KK[x_1, \ldots, x_d]$ denote the polynomial ring in $d$ variables over a field $\KK$, and $\m = \left( x_1, \ldots, x_d \right)$. Let $[d] = \{1,\ldots,d\}$. There is a bijection between the squarefree monomial ideals in $S$ and  simplicial complexes on $d$ vertices via the {\it Stanley-Reisner correspondence}. 

\begin{definition} Given a simplicial complex $\Delta$, the Stanley-Reisner ideal  of $\Delta$ is defined by 
$$I_{\Delta} = \left( x_{i_1} \cdots x_{i_s} \mid \left\lbrace i_1, \ldots, i_s \right\rbrace \notin \Delta \right).$$
The quotient $\KK\left[ \Delta \right] = S / I_{\Delta}$ is called the  {\it Stanley-Reisner ring} associated to $\Delta$. Given a squarefree monomial ideal $I\subseteq S$, the  Stanley-Reisner complex of $I$ is given by
$$\Delta_I = \left\lbrace \left\lbrace i_1, \ldots, i_s \right\rbrace \subseteq [d] \, | \, x_{i_1} \ldots x_{i_s} \notin I \right\rbrace.$$
\end{definition}

\begin{theorem}[Stanley-Reisner Correspondence]
There is a bijective correspondence between the set of squarefree monomial ideals in $S$ and simplicial complexes on $[d]$ given by the maps $\Delta\mapsto I_\Delta$ and $I\mapsto \Delta_I$
\end{theorem}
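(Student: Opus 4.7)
The plan is to verify the four standard things: that $\Delta_I$ is genuinely a simplicial complex, that $I_\Delta$ is a squarefree monomial ideal (this is immediate from its definition), and then that the two assignments are mutually inverse. The main organizational point is the dictionary between subsets $\sigma\subseteq [d]$ and squarefree monomials $x_\sigma:=\prod_{i\in\sigma}x_i$, combined with the basic fact that a monomial ideal $I$ contains a monomial $m$ if and only if $m$ is divisible by one of its monomial generators.

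First I would check that $\Delta_I$ is downward-closed. If $\sigma\in\Delta_I$, so that $x_\sigma\notin I$, and $\theta\subseteq\sigma$, then $x_\theta$ divides $x_\sigma$; were $x_\theta\in I$, the ideal property would force $x_\sigma\in I$, a contradiction. Hence $\theta\in\Delta_I$, so $\Delta_I$ is a simplicial complex on $[d]$. That $I_\Delta$ is a squarefree monomial ideal is built into its definition.

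Next I would verify the two composition identities. For $\Delta_{I_\Delta}=\Delta$, observe that $x_\sigma\in I_\Delta$ iff $x_\sigma$ is divisible by $x_\tau$ for some non-face $\tau\notin\Delta$, i.e.\ iff some $\tau\subseteq\sigma$ fails to lie in $\Delta$. If $\sigma\in\Delta$, then downward-closure forces every such $\tau$ to be in $\Delta$, so $x_\sigma\notin I_\Delta$ and $\sigma\in\Delta_{I_\Delta}$; if $\sigma\notin\Delta$, take $\tau=\sigma$ to conclude $x_\sigma\in I_\Delta$ and $\sigma\notin\Delta_{I_\Delta}$. For $I_{\Delta_I}=I$, use the fact that a squarefree monomial ideal is determined by the set of squarefree monomials it contains: by construction, $I_{\Delta_I}$ is generated by the $x_\tau$ with $\tau\notin\Delta_I$, which is precisely the set of squarefree monomials in $I$; since $I$ itself is generated (minimally, even) by squarefree monomials, we get $I_{\Delta_I}=I$.

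There is no real obstacle here: the content is bookkeeping around the two definitions, and the only place one must be slightly careful is in the direction $I_{\Delta_I}\subseteq I$, where one needs to know that $I$ is generated by its squarefree monomials (which is the defining hypothesis). Everything else is a direct unwinding of the definitions.
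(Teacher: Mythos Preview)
Your argument is correct and is the standard verification of this classical fact. The paper, however, does not prove this theorem: it is stated as a background result in the preliminaries section without proof, and the reader is referred to the survey \cite{SRsurvey} and the book \cite{MSAlgComb} for details on Stanley--Reisner theory. So there is no paper proof to compare against; your write-up would serve perfectly well as a complete proof were one needed.
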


One can read several properties of $S/I_\Delta$ from the underlying simplicial complex $\Delta$. For instance,  $\dim(S/I_\Delta)=\dim(\Delta)+1$. In addition, the minimal primes of $S/I_\Delta$ correspond to the facets of $\Delta$.

A special class of squarefree monomial ideals are given by monomial edge ideals. These were introduced by Villarreal \cite{VillarrealGraphs} about thirty years ago, and they  have been a source of intense research (see a recent survey on this topic \cite{SurveyEdge}).
 
 	\begin{definition}[{\cite{VillarrealGraphs}}]
		Let $G=(V(G),E(G))$ be a simple graph on the set $[d]$ and $S=\KK[x_1,\ldots,x_d]$. The {\it monomial edge ideal}, $I_G$, of $G$  is defined by
		$$
		I_G=\left(x_ix_j\; | \; \hbox{ for }\{i,j\}\in E(G)\right).
		$$
	\end{definition}

It turns out that every monomial edge ideal corresponds to a simplicial complex.

\begin{definition}
Let $G$ be a simple graph on $[d]$. We say that a set $W\subseteq [d]$ is {\it independent} if no edge of $G$ connects two vertices in $W$.
\end{definition}

The collection of all independent sets of a graph $G$ gives a simplicial complex, which we denote by $\Delta(G)$.
 
\begin{proposition}
 	Let $G$ be a simple graph on the set  $[d]$, $S=\KK[x_1,\ldots,x_d]$ and let $\Delta(G)$ be the simplicial complex of independent sets of $G$.
 	Then the monomial edge ideal $I_G$ coincides with the Stanley-Reisner ideal $I_{\Delta(G)}$.
\end{proposition}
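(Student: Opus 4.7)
The plan is to prove equality $I_G = I_{\Delta(G)}$ by establishing both inclusions, using only the definitions of the edge ideal and of the Stanley-Reisner correspondence. Since both are monomial ideals generated by squarefree monomials, it suffices to compare sets of monomial generators. The key conceptual observation is that a subset $W \subseteq [d]$ is a face of $\Delta(G)$ precisely when $W$ is independent, so the non-faces of $\Delta(G)$ are exactly the subsets containing some edge of $G$.

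For the inclusion $I_G \subseteq I_{\Delta(G)}$, I would take a generator $x_i x_j$ of $I_G$ with $\{i,j\} \in E(G)$. Since $i$ and $j$ are joined by an edge, $\{i,j\}$ is not an independent set, hence $\{i,j\} \notin \Delta(G)$. By definition of the Stanley-Reisner ideal, $x_i x_j$ then lies in $I_{\Delta(G)}$.

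For the reverse inclusion $I_{\Delta(G)} \subseteq I_G$, I would take a generator $x_{i_1}\cdots x_{i_s}$ of $I_{\Delta(G)}$, so that $\sigma = \{i_1,\ldots,i_s\} \notin \Delta(G)$. This means $\sigma$ is not independent in $G$, so there exist $a \neq b$ in $\{1,\ldots,s\}$ with $\{i_a, i_b\} \in E(G)$. Then $x_{i_a} x_{i_b}$ is a generator of $I_G$ and divides $x_{i_1}\cdots x_{i_s}$, so the latter lies in $I_G$.

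There is no real obstacle here; the argument is a direct unravelling of the two definitions. If one wishes to be more structural, one can observe that the minimal non-faces of $\Delta(G)$ are exactly the edges of $G$ (a minimal non-independent set has size two), and hence the minimal monomial generators of $I_{\Delta(G)}$ coincide with those of $I_G$, yielding the equality of ideals at once.
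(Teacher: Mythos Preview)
Your proof is correct; it is the straightforward verification from the definitions, and your closing remark that the minimal non-faces of $\Delta(G)$ are exactly the edges of $G$ is the cleanest way to see the equality. The paper itself states this proposition without proof, treating it as an immediate consequence of the definitions, so your write-up simply fills in what the authors left to the reader.
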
 

An immediate consequence of the Proposition and the results previosuly discussed is the following.
 
 \begin{corollary}\label{ThmDimMat}
 	Let $G$ be a simple graph on the set  $[d]$, $S=\KK[x_1,\ldots,x_d]$ and $I_G$ be the monomial edge ideal of $G$.
 	Then,
$$
\dim( S/I_G)=\max\{|W| \; | \; W\subseteq [d] \hbox{ is independent }\}.
$$ 	
 \end{corollary}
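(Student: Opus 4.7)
The plan is to simply chain together the two facts that were already recorded in the excerpt, so the proof should be almost one line.

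First, I would invoke the immediately preceding proposition to rewrite $I_G$ as the Stanley--Reisner ideal $I_{\Delta(G)}$ of the independence complex, so that
\[
\dim(S/I_G) \;=\; \dim\bigl(S/I_{\Delta(G)}\bigr).
\]
Next, I would use the standard dictionary between a simplicial complex and its Stanley--Reisner ring noted right after the Stanley--Reisner Correspondence, namely $\dim(S/I_\Delta) = \dim(\Delta)+1$, applied to $\Delta = \Delta(G)$.

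By Definition of the dimension of a simplicial complex,
\[
\dim(\Delta(G)) + 1 \;=\; \max\{|\sigma| \, : \, \sigma \in \Delta(G)\},
\]
and by construction the faces $\sigma$ of $\Delta(G)$ are precisely the independent sets $W \subseteq [d]$. Combining these identities yields the claimed formula. The only thing to verify along the way is that $\Delta(G)$ is actually a simplicial complex, which is immediate since any subset of an independent set is independent; this ensures that the Stanley--Reisner correspondence applies and the dimension formula is valid. There is no real obstacle here: the statement is essentially a translation step, and the main content has already been supplied by the preceding proposition and by the general dimension formula for Stanley--Reisner rings.
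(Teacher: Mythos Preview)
Your argument is correct and matches the paper's approach exactly: the paper states this corollary as ``an immediate consequence of the Proposition and the results previously discussed,'' namely the identification $I_G = I_{\Delta(G)}$ together with the formula $\dim(S/I_\Delta) = \dim(\Delta)+1$. You have simply spelled out that one-line derivation.
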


\subsection{Gr{\"o}bner deformations}\label{GroebnerSubSec}

In this subsection we recall the basic notions on monomial orders and initial ideals with respect to a given weight.
\begin{definition}
Let $S=\KK[x_1,\ldots,x_d]$ be a polynomial ring over a field. Let $<$ be a total order on the set of monomials of $S$. We say that $<$ is a (global) monomial order if
\begin{itemize}
\item  $\forall \alpha\neq 0$, \quad $1< x^\alpha$;
\item $\forall \alpha,\theta,\gamma$ \quad $x^\alpha < x^\theta$ implies $x^\alpha x^\gamma < x^\theta x^\gamma.$
\end{itemize}
\end{definition}

Given a non-zero element $f \in S$ we let $\IN_<(f) = \max\{x^\alpha \mid x^\alpha \in \Supp(f)\}$, where $\Supp(f)$ denotes the set of monomials which appear with non-zero coefficient in $f$. Given an ideal $I\subseteq S$, we define the initial ideal of $I$ (with respect to $<$) as $\IN_<(I) = (\IN_<(f) \mid f\in I)$. 


Given a weight $\omega=(\omega_1,\ldots,\omega_d) \in \NN^d$ and a monomial $x^\alpha = x_1^{\alpha_1} \cdots x_d^{\alpha_d}$ in $S$, we let $\omega(\alpha) = \sum_{i=1}^d \omega_i\alpha_i$. Given a nonzero $f \in S$, we let $\omega(f) = \max\{\omega(\alpha) \mid x^\alpha \in \Supp(f)\}$. If $f=\sum_{\alpha} \lambda_{\alpha} x^\alpha$, we let $\IN_\omega(f) = \sum_{\omega(\alpha) = \omega(f)} \lambda_{\alpha} x^\alpha$ be the initial form of $f$ with respect to $\omega$. Given an ideal $I \subseteq S$, we let $\IN_\omega(I) = (\IN_\omega(f) \mid f \in I)$ be its initial ideal with respect to the weight $\omega$.

The following result shows that, when considering the initial ideal of a given ideal with respect to a monomial order, one can always reduce to considering the initial ideal with respect to a weight.

\begin{theorem}[{\cite[Proposition 1.11]{StuGBCP},\cite[Proposition 3.4]{VarbaroFiberFull}}] \label{thm initial weight}
Let $S=\KK[x_1,\ldots,x_d]$ be a polynomial ring over a field, and $<$ be a monomial order on $S$. 
There exists a weight $\omega \in(\ZZ_{>0})^d$ such that $\IN_<(I)=\IN_\omega(I)$.
\end{theorem}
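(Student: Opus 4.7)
The plan is to exploit a Gröbner basis to reduce the question to finitely many order comparisons, and then to construct $\omega$ by a Gordan-type linear-programming argument that genuinely uses the multiplicative compatibility of $<$. First I would fix a finite Gröbner basis $G=\{g_1,\ldots,g_s\}$ of $I$ with respect to $<$, and write each $g_j = c_j x^{\alpha_{j,0}} + \sum_{k\geq 1} c_{j,k}\, x^{\alpha_{j,k}}$ with $x^{\alpha_{j,0}} = \IN_<(g_j)$. It will suffice to produce $\omega \in (\ZZ_{>0})^d$ with $\omega\cdot\alpha_{j,0} > \omega\cdot\alpha_{j,k}$ for every $j$ and every $k\geq 1$: then $\IN_\omega(g_j) = c_j \IN_<(g_j)$, and the inclusion $\IN_<(I) \subseteq \IN_\omega(I)$ is immediate.

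For existence, I would enlarge the set of differences $\alpha_{j,0} - \alpha_{j,k}$ by the standard basis vectors $e_1,\ldots,e_d$, which encode the comparisons $x_i > 1$ inherent to any monomial order and whose strict positivity under $\omega$ forces $\omega \in (\ZZ_{>0})^d$; call the resulting finite set $V\subset \ZZ^d$. By Gordan's alternative, if no $\omega \in \RR^d$ satisfied $\omega\cdot v > 0$ for every $v\in V$, there would exist nonnegative coefficients $(c_v)_{v\in V}$, not all zero (and, after clearing denominators, in $\NN$), with $\sum_v c_v v = 0$. This identity of exponent vectors translates into an equality of two monomials. However, the monomial-order axioms, namely that strict inequalities can be iterated and multiplied and that a strict times a weak inequality remains strict, force the two sides of this monomial identity to be distinct under $<$, a contradiction. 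A real solution $\omega$ therefore exists, and density followed by clearing denominators delivers $\omega \in (\ZZ_{>0})^d$ with all the desired strict positivities.

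To conclude, the reverse inclusion $\IN_\omega(I) \subseteq \IN_<(I)$ would be obtained by a Hilbert-function count with respect to the grading $\deg(x_i) = \omega_i$, which makes both initial ideals homogeneous. The filtration of $S/I$ by $\omega$-degree has associated graded $S/\IN_\omega(I)$, giving one $\omega$-graded Hilbert series. On the other hand, a division-algorithm argument using $\IN_\omega(g_j) = c_j\IN_<(g_j)$ shows that in each reduction step the $\omega$-weight of monomials cannot increase, so the standard monomials with respect to $<$ of $\omega$-weight at most $d$ form a $\KK$-basis of the $d$-th filtered piece of $S/I$; this is the same Hilbert series for $S/\IN_<(I)$. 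Equal Hilbert series combined with the inclusion $\IN_<(I) \subseteq \IN_\omega(I)$ then forces equality. The main obstacle is the Gordan step: translating a linear dependence into a bona fide contradiction requires invoking the multiplicative axiom of a monomial order, not merely that $<$ is a total order, and this is precisely the place where the full strength of the monomial-order hypothesis is used.
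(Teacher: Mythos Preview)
The paper does not give its own proof of this statement; it merely cites \cite[Proposition 1.11]{StuGBCP} and \cite[Proposition 3.4]{VarbaroFiberFull}. Your proposal is correct and is essentially the standard argument from those references: reduce to finitely many strict linear inequalities coming from the terms of a fixed Gr{\"o}bner basis (together with the comparisons $x_i>1$), invoke a Gordan/Farkas alternative, and derive a contradiction from a nonnegative dependence by exploiting the multiplicative axiom of a monomial order. Your verification that the division algorithm by $G$ never increases $\omega$-weight is exactly what makes the Hilbert-function comparison go through, and the final step (equal $\omega$-graded Hilbert functions plus the inclusion $\IN_<(I)\subseteq\IN_\omega(I)$ forces equality) is clean.

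One remark on presentation rather than correctness: the reverse inclusion is often handled in the literature by refining $\omega$ to the term order $<_\omega$ (compare $\omega$-weights first, break ties with $<$), observing that $\IN_{<_\omega}(g_j)=\IN_<(g_j)$ so $G$ is also a Gr{\"o}bner basis for $<_\omega$, and then using $\IN_<(\IN_\omega(I))=\IN_{<_\omega}(I)=\IN_<(I)$. Your Hilbert-function route is equivalent in strength and arguably more transparent, since it avoids proving the auxiliary identity $\IN_<(\IN_\omega(I))=\IN_{<_\omega}(I)$; the price is the filtration bookkeeping, which you handled correctly.
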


Let $T=S[t] = \k[x_1,\ldots,x_d,t]$. Given $f = \sum_{\alpha} \lambda_{\alpha} x^\alpha \in S$ and a weight $\omega \in \NN^d$ we define the $\omega$-homogenization of $f$ as 
\[
{\rm hom}_\omega(f) = \sum_{\alpha} \lambda_{\alpha} x^\alpha t^{\omega(f) - \omega(\alpha)} \in T.
\]
Given an ideal $I \subseteq S$ we let its $\omega$-homogenization be ${\rm hom}_\omega(I) = ({\rm hom}_\omega(f) \mid f \in I)$. Note that ${\rm hom}_\omega(I)$ is homogeneous in $T$ with respect to the grading $\deg(x_i) = \omega_i$ and $\deg(t)=1$. In particular, by restriction of scalars, ${\rm hom}_\omega(I)$ is a graded $\k[t]$-module with respect to the standard grading on $\k[t]$. 

\begin{remark} \label{rem flat k[t]} Since $\k[t]$ is a PID, a module $M$ is flat over $\k[t]$ if and only if it is torsion-free. In particular, if $M$ is graded, we have that $t-a$ is a non-zero divisor on $M$ for all $a \in \k \smallsetminus \{0\}$, and $t$ is a non-zero divisor on $M$ if and only if $M$ is flat over $\k[t]$.
\end{remark}

\begin{lemma}[{\cite[Proposition 3.5]{VarbaroFiberFull}}] \label{lemma regular t,t-1}
Let $I \subseteq S$ be an ideal, $\omega \in \NN^d$ be a weight, and set $J = {\rm hom}_\omega(I) \subseteq T=S[t]$. Then $t-a$ is a non-zero divisor on $R=T/J$ for all $a\in \k$. Moreover, $R/(t-a) \cong S/I$ for all $a \in \k\smallsetminus \{0\}$, while $R/(t) \cong S/\IN_\omega(I)$.
\end{lemma}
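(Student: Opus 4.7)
The plan is to work with the grading on $T=S[t]$ given by $\deg(x_i)=\omega_i$ and $\deg(t)=1$; call this the $\omega$-grading. With respect to this grading, each ${\rm hom}_\omega(f)$ is homogeneous of degree $\omega(f)$, so $J={\rm hom}_\omega(I)$ is a homogeneous ideal and $R=T/J$ is $\NN$-graded. The key structural observation I would establish first is that every $\omega$-homogeneous element $g \in T$ of degree $d$ admits a unique expression $g = t^{e} \, {\rm hom}_\omega(f)$ with $f = g|_{t=1} \in S$ and $e = d - \omega(f) \geq 0$; this follows by writing $g$ in the monomial basis of $T$ and comparing exponents of $t$.

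For the two quotient isomorphisms I would use the substitution maps $\pi_a : T \to S$ sending $x_i \mapsto x_i$ and $t \mapsto a$. For $a=0$, direct computation from the definition gives $\pi_0({\rm hom}_\omega(f)) = \IN_\omega(f)$, hence $\pi_0(J) = \IN_\omega(I)$ and $R/(t) \cong S/\IN_\omega(I)$. For $a \in \k \smallsetminus \{0\}$, one obtains $\pi_a({\rm hom}_\omega(f)) = a^{\omega(f)} \psi_a(f)$, where $\psi_a : S \to S$ is the $\k$-algebra automorphism $x_i \mapsto a^{-\omega_i} x_i$. Since $a^{\omega(f)}$ is a unit and $\psi_a$ is an automorphism, $\pi_a(J) = \psi_a(I)$, and post-composing with $\psi_a^{-1}$ produces $R/(t-a) \cong S/I$.

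The heart of the proof is the non-zerodivisor statement, which I would handle in two steps. For $a=0$: suppose $tg \in J$; since $J$ is $\omega$-homogeneous I may take $g$ $\omega$-homogeneous, write $g = t^e \, {\rm hom}_\omega(f)$ as above, and apply $\pi_1$ to $tg \in J$ to get $f = \pi_1(tg) \in \pi_1(J) = I$, so ${\rm hom}_\omega(f) \in J$ and $g \in J$. For $a \in \k \smallsetminus \{0\}$: the automorphism of $S[t,t^{-1}]$ sending $x_i \mapsto t^{\omega_i} x_i$ carries $I \cdot S[t,t^{-1}]$ to $J \cdot S[t,t^{-1}]$ up to unit powers of $t$ on each generator, yielding $R[t^{-1}] \cong (S/I)[t,t^{-1}]$. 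The right-hand side is free over $\k[t,t^{-1}]$, so $t-a$ is a non-zerodivisor there; combined with the injection $R \hookrightarrow R[t^{-1}]$ afforded by the $a=0$ case, this forces $t-a$ to be a non-zerodivisor on $R$. The main obstacle is really the homogeneous decomposition $g = t^e \, {\rm hom}_\omega(f)$: it is the critical structural input that collapses the $t$-torsion question into a single application of $\pi_1$, and without it the naive approach of chasing relations among generators of $J$ becomes unwieldy.
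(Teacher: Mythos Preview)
Your argument is correct. The treatment of $t$-regularity and of the two quotient isomorphisms matches the paper's proof almost verbatim: the paper also reduces to a homogeneous $g$ with $tg\in J$, substitutes $t=1$ to land $\overline g$ in $I$, and uses the factorization $g=t^r\,{\rm hom}_\omega(\overline g)$; likewise, the paper computes $(J,t)=(\IN_\omega(I)T,t)$ and $(J,t-1)=(IT,t-1)$ by the same substitution. Your explicit automorphism $\psi_a$ for general $a\ne 0$ is a nice addition, as the paper only carries out the case $a=1$ and defers the rest to the reference.

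The one genuine divergence is the case $a\ne 0$ for the non-zerodivisor claim. The paper's argument is considerably shorter: since $J$ is homogeneous for the $\omega$-grading (with $\deg t=1$), the quotient $R$ is in particular a $\ZZ$-graded $\k[t]$-module, and on any graded $\k[t]$-module a nonzero $t-a$ with $a\ne 0$ is automatically a non-zerodivisor (a homogeneous annihilator would force both $t\cdot m=0$ and $a\cdot m=0$). Your route through the automorphism $x_i\mapsto t^{\omega_i}x_i$ of $S[t,t^{-1}]$ and the identification $R[t^{-1}]\cong (S/I)[t,t^{-1}]$ is valid and yields more structure---it recovers, for free, the isomorphism $R\otimes_{\k[t]}\k(t)\cong (S/I)\otimes_\k \k(t)$ recorded elsewhere in the paper---but it is a heavier tool than needed for the bare regularity statement.
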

\begin{proof}
Since $J$ is graded with respect to the grading $\deg(x_i) = \omega_i$ and $\deg(t)=1$ on $T$, it is also graded with respect to the standard grading on $\k[t]$. Therefore, by Remark \ref{rem flat k[t]} we have that $t-a$ is regular on $T/J$ for all $a \ne 0$, since the latter is also a graded $\k[t]$-module. To see that $t$ is regular as well, assume that $tf \in J$ for some $f=f(x_1,\ldots,x_d,t) \in T$, which we may assume being homogeneous with respect to the grading on $T$. We then have that $tf = \sum_i g_i {\rm hom}_\omega(f_i)$, for some $g_i \in T$ and $f_i \in I$. By setting $t=1$ we immediately see that $\overline{f} = f(x_1,\ldots,x_d,1) \in I$, since ${\rm hom}_\omega(f_i)_{|t=1} = f_i \in I$ and $g_i(x_1,\ldots,x_d,1) \in S$. As a consequence, we have that ${\rm hom}_\omega(\overline{f}) \in J$. Since $f \in T$ was chosen to be homogeneous, it follows from the definition of homogenization that $f=t^r{\rm hom}_\omega(\overline{f})$ for some $r \geq 1$, and therefore $f \in J$.

We now pass to the second part of the lemma. For brevity, we only show the first isomorphism for $a=1$; for the general case we refer the reader to \cite[Proposition 3.5]{VarbaroFiberFull}. Since for any $f \in S$ we have that ${\rm hom}_\omega(f)_{|t=1}= f$, it follows that  ${\rm hom}_\omega(f) - \iota(f)\in (t-1)$, where $\iota:S \hookrightarrow T$ is the natural inclusion. From this it is clear that $(J,t-1) = (IT,t-1)$, and therefore $R/(t-1) \cong S/I$. On the other hand, again by definition of homogenization we have that ${\rm hom}_\omega(f) - \iota(\IN_\omega(f)) \in (t)$. From this, it is again clear that $(J,t) = (\IN_\omega(I)T,t)$, and thus $R/(t) \cong S/\IN_\omega(I)$.
\end{proof}

Putting together the considerations made above, Theorem \ref{thm initial weight}, and Lemma \ref{lemma regular t,t-1} we deduce the following.

\begin{remark}\label{RemNotation}
Let $I \subseteq S=\KK[x_1,\ldots,x_d]$ be an ideal,  $<$ be a monomial order on $S$ and let 
$T=S[t] = \k[x_1,\ldots,x_d,t]$. 
  There exists a weight $\omega \in \ZZ_{>0}^d$ such that, if we set $J={\rm hom}_\omega(I) \subseteq T$ and $R=T/J$, then
\begin{enumerate}
\item $R$ is a flat $\k[t]$-module.
\item $R/(t-a)R \cong S/I$ for every $a\in \k\setminus \{0\}$.
\item $R/tR \cong S/\IN_<(I)$.
\item (\cite[Theorem 15.17]{Eisenbud}) $R\otimes_{\k[t]} \KK(t) \cong S/I\otimes_\k \KK(t)$.
\end{enumerate}
\end{remark}



It turns out that an ideal and its initial ideal share several properties. For instance, 
we have that $I$ and $\IN_<(I)$ have the same Hilbert function. As a consequence, they have the same dimension and Hilbert-Samuel multiplicity. We also have the following important well-known result, we include its proof for sake of completeness.

\begin{theorem}
Let $I\subseteq S=\KK[x_1,\ldots,x_d]$ be an homogeneous ideal, and $<$ a monomial order.
Then,
$$
\beta_{i,j}(S/I)\leq \beta_{i,j}(S/\IN_<(I)).
$$
\end{theorem}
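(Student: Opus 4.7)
The plan is to use the flat Gr\"obner degeneration from Remark \ref{RemNotation} to lift a minimal free resolution of $R=T/J$ over $T$, and then specialize the parameter $t$ at both $0$ and $1$. At $t=0$ the specialization will be minimal and compute $\beta_{i,j}(S/\IN_<(I))$; at $t=1$ it will resolve $S/I$ but may be non-minimal, yielding an inequality between Betti numbers in the desired direction.

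To set this up, I would equip $T=\k[x_1,\ldots,x_d,t]$ with the bigrading $\deg(x_i)=(1,\omega_i)$ and $\deg(t)=(0,1)$. A short check shows that if $f\in S$ is standard-homogeneous of degree $d$, then every term $x^\alpha t^{\omega(f)-\omega(\alpha)}$ of ${\rm hom}_\omega(f)$ has bidegree $(d,\omega(f))$, so $J={\rm hom}_\omega(I)$ is bihomogeneous and $R=T/J$ is a bigraded $T$-module. Let $F_\bullet\to R$ be a minimal bigraded $T$-free resolution.

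By Lemma \ref{lemma regular t,t-1}, both $t$ and $t-1$ are non-zero divisors on $R$, so $F_\bullet\otimes_T T/(t)$ and $F_\bullet\otimes_T T/(t-1)$ are $S$-free resolutions of $R/tR\cong S/\IN_<(I)$ and $R/(t-1)R\cong S/I$ respectively. Both specializations are homogeneous with respect to the first component of the bigrading on $T$, which recovers the standard grading on $S$. After setting $t=0$, matrix entries of differentials of $F_\bullet$, which originally lie in $(x_1,\ldots,x_d,t)$, pass to entries in $(x_1,\ldots,x_d)S$, so the resulting resolution of $S/\IN_<(I)$ is minimal and
\[
\beta_{i,j}(S/\IN_<(I))=\sum_{k}\beta^T_{i,(j,k)}(R).
\]
After setting $t=1$, the same numbers bound $\beta_{i,j}(S/I)$, but only as an inequality, since a matrix entry of bidegree $(0,k)$ with $k\geq 1$ is a scalar multiple of $t^k$ and becomes a nonzero constant at $t=1$; any such entry lets one split off a trivial summand from the resolution, which can only decrease Betti numbers.

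The only delicate point, and the main place where care is needed, is the grading bookkeeping: the grading on $T$ that makes $J$ homogeneous is the $\omega$-grading, not the one inherited from the standard $\ZZ$-grading on $S$. Introducing the second component of the bigrading is what allows us to track the standard grading on $S$ along both specializations and compare graded Betti numbers in the correct grading. Everything else follows from flatness of the family (Remark \ref{RemNotation}) and the regularity of $t$ and $t-1$ on $R$.
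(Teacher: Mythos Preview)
Your proof is correct and follows essentially the same strategy as the paper: lift a minimal free resolution of $R=T/J$ over the deformation ring $T$, observe that the specialization at $t=0$ stays minimal and computes $\beta_{i,j}(S/\IN_<(I))$, while the other specialization resolves $S/I$ but need not be minimal. The only difference is that the paper passes to the generic fiber $\KK(t)$ rather than the special fiber at $t=1$, and is less explicit than you are about the bigrading needed to track the standard grading on $S$; your treatment of that bookkeeping is in fact cleaner.
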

\begin{proof}
We follow the notation from  Remark \ref{RemNotation}. Let $\mathbb{F}$ be a minimal graded free $T$-resolution of $R$. From Remark \ref{RemNotation} (4) it follows that $\mathbb{F}\otimes_{\k[t]} \KK(t)$ is a (not necessarily minimal) graded free resolution of $S/I\otimes_\k k(t)$. On the other hand, from Remark \ref{RemNotation} (1) and  (3) it follows that $\mathbb{F}\otimes_{\k[t]} \left(\k[t]/(t)\right)$ is a minimal graded free resolution of $S/\IN_<(I)$. It follows that, $$\beta_{i,j}(S/I)=\beta_{i,j}(S/I\otimes_\k \KK(t))\leq  \beta_{i,j}(R)= \beta_{i,j}(S/\IN_<(I)),$$ and the conclusion follows. 
\end{proof}

Finally, we recall the next result, which points to the fact that the topology of the spectra of $S/I$ and $S/\IN_<(I)$ share similar features (see \cite{ALNBRM} 
 and \cite{MV1} for more results in this direction). 
\begin{theorem}[{\cite[Theorem 1]{KS}}]
Let $\p\subseteq S=\KK[x_1,\ldots,x_d]$ be a prime ideal, and $<$ a monomial order, then $S/\IN_<(\p)$ is an equidimensional ring. 
\end{theorem}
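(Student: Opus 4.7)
The plan is to exploit the flat degeneration from Remark \ref{RemNotation}. By Theorem \ref{thm initial weight}, choose a weight $\omega \in \ZZ_{>0}^d$ with $\IN_<(\p) = \IN_\omega(\p)$, set $J = \mathrm{hom}_\omega(\p) \subseteq T = S[t]$ and $R = T/J$. Then $R$ is flat over $\k[t]$, $R/tR \cong S/\IN_<(\p)$, and $R \otimes_{\k[t]} \KK(t) \cong (S/\p) \otimes_\k \KK(t)$. The first step is to show that $R$ has a unique minimal prime. Since $\p$ is prime, $S/\p$ is a domain, so $(S/\p) \otimes_\k \KK(t)$ is the localization of the polynomial ring $(S/\p)[t]$ at the multiplicative set $\k[t] \setminus \{0\}$, hence a domain. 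By flatness of $R$ over $\k[t]$, every nonzero element of $\k[t]$ is a nonzerodivisor on $R$, so no associated prime of $R$ meets $\k[t] \setminus \{0\}$; consequently, the minimal primes of $R$ are in bijection with those of $R \otimes_{\k[t]} \KK(t)$. Since the latter is a domain, $R$ has a unique minimal prime $Q$, and $R_{\mathrm{red}} := R/Q$ is a domain.

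Next I would transfer equidimensionality from $R$ to the special fiber. Since $t$ is a nonzerodivisor on $R$ by Lemma \ref{lemma regular t,t-1}, we have $t \notin Q$, and hence $t$ remains a nonzerodivisor on the domain $R_{\mathrm{red}}$. By Krull's Hauptidealsatz, every minimal prime of $t R_{\mathrm{red}}$ has height exactly $1$ in $R_{\mathrm{red}}$. Since $R_{\mathrm{red}}$ is a finitely generated $\k$-algebra and a domain, it is catenary and satisfies $\dim R_{\mathrm{red}}/\mathfrak{q} + \Ht(\mathfrak{q}) = \dim R_{\mathrm{red}}$ for every prime $\mathfrak{q}$, so every minimal prime of $t R_{\mathrm{red}}$ has coheight $\dim R_{\mathrm{red}} - 1$. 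Because $Q$ is the nilradical of $R$, the primes of $R/tR$ correspond in an inclusion- and coheight-preserving fashion with those of $R_{\mathrm{red}}/tR_{\mathrm{red}}$; therefore $R/tR \cong S/\IN_<(\p)$ is equidimensional.

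The main subtlety I expect is the bijection, in the first paragraph, between minimal primes of $R$ and those of the generic fiber $R \otimes_{\k[t]} \KK(t)$. This rests on the flatness of $R$ over $\k[t]$ (equivalently, $\k[t]$-torsion freeness) established in Lemma \ref{lemma regular t,t-1}; without it, minimal components of $R$ could be killed upon inverting $\k[t] \setminus \{0\}$. Everything after this reduction is standard dimension theory for finitely generated domains over a field.
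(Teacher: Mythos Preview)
The paper does not give a proof of this statement; it is simply recalled from the cited reference. Your argument is correct and is in fact the standard modern approach, built precisely on the flat degeneration machinery that the paper sets up in Remark~\ref{RemNotation} and Lemma~\ref{lemma regular t,t-1}. The only step requiring genuine care---which you handled correctly---is the passage from the generic fiber being a domain to $R$ having a unique minimal prime: flatness over $\k[t]$ forces every associated prime of $R$ to avoid $\k[t]\smallsetminus\{0\}$, so none is lost upon localizing, and hence the unique associated (hence minimal) prime of the domain $R\otimes_{\k[t]}\KK(t)$ pulls back to a unique minimal prime of $R$. After that, the conclusion is standard dimension theory for finitely generated domains over a field, exactly as you say.
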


\subsection{Binomial edge ideals}\label{SubsecBinomialEdgeIdeals}
We now recall the definition of binomial edge ideals \cite{HHHKR10,Ohtani}. These are related to conditional independence statements \cite{HHHKR10}. In addition, there are relations between homological properties of the binomial edge ideal and the connectivity of the underlying graph \cite{BNB}.

	\begin{definition}[{\cite{HHHKR10,Ohtani}}]
		Let $G=(V(G),E(G))$ be a simple graph such that $V(G)=[d]$.  Let $\KK$ be a field and $S=\KK[x_1,\ldots,x_d,y_1,\ldots,y_d]$ the ring of polynomials in $2d$ variables over $\k$. The {\it binomial edge ideal}, $J_G$, of $G$  is defined by
		$$
		J_G=\left(x_iy_j-x_jy_i\; | \; \hbox{ for }\{i,j\}\in E(G)\right).
		$$
	\end{definition}

We recall that binomial edge ideals have an square-free Groebner deformations.

\begin{theorem}[{\cite[Theorem 2.1]{HHHKR10}, \cite[Theorem 3.2.]{Ohtani}}]
Let $G$ be a simple graph on $[d]$, 
  $S=\KK[x_1,\ldots,x_d,y_1,\ldots,y_d]$, and $J_G$ be the binomial edge ideal of $G$. 
 Then, there exists a monomial order $<$ on $S$ such that
 $\IN_< (J_G)$ is a squarefree monomial ideal.
\end{theorem}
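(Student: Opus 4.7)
The plan is to exhibit an explicit monomial order and an explicit Gröbner basis of $J_G$ whose leading terms are squarefree. Take $<$ to be the lexicographic order on $S$ induced by $x_1 > x_2 > \cdots > x_d > y_1 > y_2 > \cdots > y_d$. Under this order, for each edge $\{i,j\}\in E(G)$ with $i<j$, the generator $f_{ij}=x_iy_j-x_jy_i$ has initial term $\IN_<(f_{ij})=x_iy_j$, which is already squarefree. However, in general the $f_{ij}$ do not form a Gröbner basis, so one needs to add more elements of $J_G$ and verify that their leading terms are also squarefree.

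Following the strategy of Herzog--Hibi--Hreinsd{\'o}ttir--Kahle--Rauh, I would associate to each suitable path in $G$ a binomial in $J_G$. Call a path $\pi: i=i_0,i_1,\ldots,i_r=j$ with $i<j$ \emph{admissible} if the interior vertices $i_1,\ldots,i_{r-1}$ are pairwise distinct, each satisfies $i_k<i$ or $i_k>j$, and no proper subsequence of $i_0,\ldots,i_r$ is itself a path from $i$ to $j$ in $G$. For each admissible path $\pi$, set
\[
u_\pi=\left(\prod_{k\,:\,i_k>j} x_{i_k}\right)\left(\prod_{l\,:\,i_l<i} y_{i_l}\right)\bigl(x_iy_j-x_jy_i\bigr).
\]
A direct induction on the length of $\pi$, using the edge relations $x_{i_k}y_{i_{k+1}}-x_{i_{k+1}}y_{i_k}\in J_G$, shows that $u_\pi\in J_G$. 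By construction, the leading term of $u_\pi$ under $<$ is the squarefree monomial $\bigl(\prod_{i_k>j}x_{i_k}\bigr)\bigl(\prod_{i_l<i}y_{i_l}\bigr)x_iy_j$; here squarefreeness uses that the vertices of an admissible path are distinct and that no $x$ and $y$ variable with the same index appear together.

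The decisive step is to show that $\mathcal{G}=\{u_\pi\mid \pi \text{ admissible in } G\}$ is a Gröbner basis of $J_G$ with respect to $<$. I would verify this via Buchberger's criterion: given two admissible paths $\pi,\pi'$ with possibly overlapping leading monomials, one must show that the S-polynomial $S(u_\pi,u_{\pi'})$ reduces to zero modulo $\mathcal{G}$. The combinatorial content is that whenever the leading monomials of $u_\pi$ and $u_{\pi'}$ share variables, one can concatenate or reroute the two paths to produce a new admissible path $\pi''$ (or a shorter sum of $u_{\pi''}$'s) that witnesses this reduction. This is the main technical obstacle, and it is essentially a graph-combinatorial verification organized according to how the endpoints of $\pi$ and $\pi'$ interleave.

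Once the Gröbner basis property is established, we obtain
\[
\IN_<(J_G)=\bigl(\IN_<(u_\pi)\mid \pi \text{ admissible}\bigr),
\]
which is generated by squarefree monomials and is therefore a squarefree monomial ideal, proving the statement. The overall structure is thus: (i) choose the lex order; (ii) produce the family $\{u_\pi\}$ and check squarefreeness of their leading terms; (iii) invoke Buchberger to promote these to a Gröbner basis — with (iii) being where the real work lies.
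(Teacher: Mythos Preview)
The paper does not supply its own proof of this statement; it is quoted as a background result with citations to \cite{HHHKR10} and \cite{Ohtani}. Your sketch faithfully reproduces the argument of \cite[Theorem 2.1]{HHHKR10}: the lexicographic order with $x_1>\cdots>x_d>y_1>\cdots>y_d$, the family of binomials $u_\pi$ indexed by admissible paths, and the verification of Buchberger's criterion. So there is nothing to compare against in this survey, and your approach is exactly the standard one from the cited source.
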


 \subsection{Methods in prime characteristic}\label{secMprime}

In this subsection we assume that $R$ is reduced and that it has prime characteristic $p>0$. For $e \in \NN$, let $F^e:R\to R$ denote the $e$-th iteration of the Frobenius endomorphism on $R$. If $R^{1/p^e}$  is the ring of $p^e$-th roots of $R$, we can identify $F^e$ with the natural inclusion $\iota: R \hookrightarrow R^{1/p^e}$. Throughout this survey, any $R$-linear map $\phi:R^{1/p^e} \to R$ such that $\phi \circ \iota = {\rm id}_R$ is called a {\it splitting of Frobenius}, or just a splitting.

For an ideal $I$ generated by $\{f_1,\ldots, f_u\}$ we denote by $I^{[p^e]}$ the ideal generated by $\{f_1^{p^e},\cdots, f_u^{p^e}\}$. We note that $IR^{1/p^e}=(I^{[p^e]})^{1/p^e}.$

In the case in which $R=\oplus_{n\gs 0}R_n$ is $\ZZ_{\gs 0}$-graded, we can view $R^{1/p^e}$ as a $\frac{1}{p^e}\NN$-graded module in the following way:   we  write $f \in R$ as $f=f_{d_1}+\ldots+f_{d_n}$, with $f_{d_j} \in R_{d_j}$. Then, $f^{1/p^e} = f_{d_1}^{1/p^e}+\ldots+f_{d_n}^{1/p^e}$ where each $f_{d_j}^{1/p^e}$ has degree $d_j/p^e$. Similarly, if $M$ is a $\ZZ$-graded $R$-module, we have that $M^{1/p^e}$ is a $\frac{1}{p^e}\ZZ$-graded $R$-module. Here $M^{1/p^e}$ denotes the $R$-module which has the same additive structure as $M$ and scalar multiplication defined by $f \cdot m^{1/p^e} := (f^{p^e}m)^{1/p^e}$, for all $f \in R$ and $m^{1/p^e} \in M^{1/p^e}$. As a submodule of $R^{1/p^e}$, $R$  inherits a natural $\frac{1}{p^e}\NN$ grading, which is compatible with its original grading. In other words, if $f \in R$ is homogeneous of  degree $d$ with respect to its original grading, then it has degree $d=dp^e/p^e$ with respect to the inherited $\frac{1}{p^e}\NN$ grading in $R^{1/p^e}$.

\begin{definition} \label{DefnFSing} 
Let $R$ be a Noetherian ring of positive characteristic $p$. We say that $R$ is  {\it $F$-finite} if it is a finitely generated $R$-module via the action induced by the Frobenius endomorphism $F: R \to R$ or, equivalently, if $R^{1/p}$ is a finitely generated $R$-module.  If $(R,\m,\KK)$ is a $\NN$-graded $\KK$-algebra, then $R$ is $F$-finite if and only if $\KK$ is $F$-finite, if and only if $[\KK:\KK^p]< \infty$. $R$ is called {\it $F$-pure} if $F$ is a pure homomorphism, that is, if and only if the map $R \otimes_R M \to R^{1/p} \otimes_R M$ induced by the inclusion $\iota$ is injective for all $R$-modules $M$. The ring $R$ is  called {\it $F$-split} if $\iota$ is a split monomorphism. A local ring or $\NN$-graded ring $(R,\m,\KK)$ is called {\it $F$-injective} if the map induced by Frobenius on $H^i_\m(R)$ is injective for all $i \in \ZZ$. Finally, an $F$-finite ring $R$ is called {\it strongly $F$-regular} if for every $c \in R$ not in any minimal prime, the map $R \to R^{1/p^e}$ sending $1 \mapsto c^{1/p^e}$ splits for some (equivalently, all) $e \gg 0$.
\end{definition}

\begin{remark}\label{FpureFsplit}
 We have that $R$ is $F$-split if and only if $R$ is a direct summand of $R^{1/p^e}$ for some $e > 0$ or, equivalently, for all $e>0$. If $R$ is an $F$-finite ring, then $R$ is $F$-pure if and only it is $F$-split  \cite[Corollary $5.3$]{HRFpurity}. Since throughout this survey we assume that $R$ is $F$-finite, we use the word $F$-pure to refer to both.
\end{remark}

\begin{theorem}[{Fedder's Criterion \cite[Theorem 1.12]{FedderFputityFsing}}]\label{ThmFedderCriterion}
Let $(S,\n,\KK)$ be a regular local ring, and $I\subseteq \n$ be an ideal.
Then, $R=S/I$ is $F$-pure if and only if $I^{[p^e]}:I\not\subseteq \n^{[p^e]}$ for some $e>0$, if and only if
$I^{[p^e]}:I\not\subseteq \n^{[p^e]}$ for all $e > 0$.  
\end{theorem}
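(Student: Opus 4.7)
By Remark~\ref{FpureFsplit}, $R = S/I$ is $F$-pure if and only if the inclusion $R \hookrightarrow R^{1/p^e}$ admits an $R$-linear retraction for some $e > 0$; this is equivalent to the analogous statement for every $e > 0$, since iterates of a splitting at level $1$ compose to splittings at higher levels, while any splitting at level $e$ exhibits $F$ itself as a factor of a pure map. Thus I fix $e > 0$ and seek $\phi \in \Hom_R(R^{1/p^e}, R)$ with $\phi(1) = 1$.

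Because $S$ is a regular $F$-finite ring, $S^{1/p^e}$ is a finitely generated free $S$-module and $\Hom_S(S^{1/p^e}, S)$ is cyclic as an $S^{1/p^e}$-module. Fix a generator $\Phi_e$, so that every $S$-linear $\phi: S^{1/p^e} \to S$ takes the shape $\phi_u: s^{1/p^e} \mapsto \Phi_e((us)^{1/p^e})$ for a unique $u \in S$. The main technical step, which I expect to be the main obstacle, is the identification, valid for any ideal $J \subseteq S$,
\[
\Hom_S(S^{1/p^e}, J) \;\cong\; (J^{[p^e]})^{1/p^e}
\]
as $S^{1/p^e}$-submodules of $\Hom_S(S^{1/p^e}, S) \cong S^{1/p^e}$. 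This follows from the flatness of $S^{1/p^e}$ over $S$ combined with the computation $S^{1/p^e} \otimes_S (S/J) \cong (S/J^{[p^e]})^{1/p^e}$. Applying this with $J = I$ and using the short exact sequence $0 \to I \to S \to R \to 0$ produces
\[
\Hom_R(R^{1/p^e}, R) \;\cong\; \bigl((I^{[p^e]}:I)/I^{[p^e]}\bigr)^{1/p^e},
\]
under which $u + I^{[p^e]}$ corresponds to $\overline{\phi_u}: R^{1/p^e} \to R$ and evaluation at $1$ reads $u + I^{[p^e]} \mapsto \overline{\Phi_e(u^{1/p^e})} \in R$.

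Consequently $R$ is $F$-pure if and only if some $u \in (I^{[p^e]}:I)$ satisfies $\Phi_e(u^{1/p^e}) \notin \n$. Specializing the displayed identification to $J = \n$ shows that the image of $\phi_u$ is contained in $\n$ precisely when $u \in \n^{[p^e]}$; hence the existence of such a $u$ forces $(I^{[p^e]}:I) \not\subseteq \n^{[p^e]}$. Conversely, given $u \in (I^{[p^e]}:I) \setminus \n^{[p^e]}$, the same specialization provides some $s \in S$ with $\Phi_e((su)^{1/p^e}) \notin \n$, and $su$ remains in $(I^{[p^e]}:I)$ since the colon is an $S$-ideal. This proves the equivalence for each $e > 0$, and together with the first-paragraph reduction completes Fedder's criterion.
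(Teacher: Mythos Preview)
The paper does not prove Fedder's Criterion; it merely states the theorem with a citation to Fedder's original article, and records the underlying facts (that $\Hom_S(S^{1/p^e},S)$ is free of rank one over $S^{1/p^e}$, and that $\phi_u(J^{1/p^e})\subseteq I$ iff $u\in I^{[p^e]}:J$) separately in Remark~\ref{Trace}, again by citation. Your argument is the standard Fedder proof and is correct; the key identification $\Hom_R(R^{1/p^e},R)\cong\bigl((I^{[p^e]}:I)/I^{[p^e]}\bigr)^{1/p^e}$ and the surjectivity criterion $u\notin\n^{[p^e]}$ are precisely what Remark~\ref{Trace} packages. One small point: the passage from your displayed identity $\Hom_S(S^{1/p^e},J)\cong(J^{[p^e]})^{1/p^e}$ to the $\Hom_R$ formula requires not only the case $J=I$ (which identifies the kernel of $\Hom_S(S^{1/p^e},S)\to\Hom_S(S^{1/p^e},R)$) but also the colon characterization $\phi_u(I^{1/p^e})\subseteq I\iff u\in I^{[p^e]}:I$ to single out the maps that descend to $R^{1/p^e}$; this is implicit in your argument but worth making explicit. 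You also silently assume $S$ is $F$-finite (needed both for $S^{1/p^e}$ to be free and for the $F$-pure/$F$-split equivalence from Remark~\ref{FpureFsplit}); the paper adopts this as a standing hypothesis, so the omission is harmless in context.
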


\begin{remark}\label{Trace}
 Assume $R$ is an $F$-finite regular  local ring, or a polynomial ring over an $F$-finite field, then $\Hom_R(R^{1/p^e},R)$ is a free $R^{1/p^e}$-module \cite[Lemma 1.6]{FedderFputityFsing}. If $\Phi$ is a generator  (homogeneous in the graded case)  of this module as an $R^{1/p^e}$-module, then for ideals $I,J\subset R$ we have that the map $\phi:=f^{1/p^e} \cdot \Phi=\Phi(f^{1/p^e}-)$, with $f\in R$, satisfies 
$\phi\big(J^{1/p^e}\big)\subseteq I$ 
if and only if
$
f^{1/p^e}\in  \big(IR^{1/p^e} :_{R^{1/p^e}} J^{1/p^e}\big)
$
or, equivalently,
$
f\in  \big(I^{[p^e]} :_{R} J\big)
$
\cite[Lemma 1.6]{FedderFputityFsing}. In particular, 
 $\phi$ is surjective if and only if $f^{1/p^e}\not\in \m R^{1/p^e}$, that is  $f\not\in \m^{[p^e]}$.

Now, assume that  $R=\KK[x_1,\ldots,x_d]$ is a polynomial ring and that $\gamma:\KK^{1/p^e}	\to \KK$ is a splitting. Let
$\Phi: R^{1/p^e}\to R$ be the $R$-linear map  defined by
$$\Phi\left(c^{1/p^e} x_1^{\alpha_1/p^e}\cdots x_d^{\alpha_d/p^e}\right)=
\begin{cases}
\gamma(c^{1/p^e}) x_1^{(\alpha_1-p^e+1)/p^e}\cdots x_d^{(\alpha_d-p^e+1)/p^e}& \text{if } p^e| (\alpha_i-p^e+1) \ \ \forall  i,\\
0& \text{otherwise.}
\end{cases}$$
We have that $\Phi$ is a generator of   $\Hom_R(R^{1/p^e},R)$ as an $R^{1/p^e}$-module \cite[Page 22]{Brion_Kumar_book}.  The map  $\Phi$ is often called the {\it trace map} of $R$. We point out that, if $\KK$ is not perfect,   $\Phi$ depends on $\gamma$, but this is usually omitted from the notation.
\end{remark}

\begin{definition}[\cite{AE}]
Let $(R,\m,\KK)$ be either a standard graded $\KK$-algebra or a local ring. Assume that $R$ is $F$-finite and $F$-pure. 
We define
$$
I_e(R):=\{r\in R \mid \varphi(r^{1/\p^e})\in\m\hbox{ for every }\varphi\in \Hom_R(R^{1/p^e},R)\}.
$$
In addition, we define the {\it splitting prime} of $R$ as $\cP(R) := \bigcap_e I_e(R)$ and the {\it splitting dimension} of $R$ to be $\sdim(R):=\dim(R/\cP(R))$.
\end{definition}

We note that for a homogeneous element $r$,
 $r\not\in I_e(R)$ if and only if there is a homogeneous map $\varphi\in \Hom_R(R^{1/p^e},R)$ such that $\varphi(r^{1/p^e})=1$.

We are now able to define an important invariant to study singularities in prime characteristic (see \cite{BFS} for a survey on this and related invariants). We note that the definition presented here is not the original one (see \cite[Proposition 3.10]{DSNBFpurity} for details).

\begin{definition}[\cite{TW2004}]
Let $(R,\m,\KK)$ be either a standard graded $\KK$-algebra or a local ring. Assume that $R$ is $F$-finite and $F$-pure, and let
$$
b(p^e)=\max\{m \in \NN \mid J^m\not\subseteq I_e(R)\}.
$$
The $F$-pure threshold of $R$ is defined by 
$$
\fpt(R)=\lim\limits_{e\to \infty} \frac{b(p^e)}{p^e}.
$$
\end{definition}


\subsection{Local cohomology and Castelnuovo-Mumford regularity}\label{subLocCoh}


For an ideal $I\subseteq R$, we define the {\it $i$-th local cohomology of $M$ with support in $I$} as
$H^i_I(M):=H^i(\Cech^\bullet(\underline{f};R)\otimes_R M)$,
where $\Cech^\bullet(\underline{f};R)$ is the {\it {\v C}ech complex} on a set of generators $\underline{f}=f_1,\ldots,f_\ell$ of $I$. We note that  $H^i_I(M)$ does not depend on the choice of generators of $I$.
Moreover, it only depends on the radical of $I$. The {\it cohomological dimension} of $I$ 
is defined by 
$$
\cd(I)=\max\{i\in\NN \; | \; H^i_I(R)\neq 0\}.
$$
We note that, by the construction of the  {\v C}ech complex, $\cd(I)\leq \mu(I),$ where $\mu(I)$ denote the minimum number of generators of $I$. Furthermore, 
$$
\cd(I)\leq \ara(I)=\min\{\mu(J)\; | \; \sqrt{J}=\sqrt{I}\},
$$
where $\ara(I)$ denotes the arithmetic rank of $I$.
We recall that the $i$-th local cohomology functor $H^i_I(-)$ can also be defined as the $i$-th right derived functor of $\Gamma_I(-)$, where $\Gamma_I(M) = \{v \in M \mid I^n v = 0$ for some $n \in \NN\}$. 
If $I = \m$ is a maximal ideal  and $M$ is finitely generated, then $H^i_\m(M)$ is Artinian.

If  $M = \bigoplus_{\frac{n}{p^e} \in \frac{1}{p^e} \ZZ} M_{\frac{n}{p^e}}$ is a $\frac{1}{p^e}\NN$-graded $R$-module,
and we let $R_+=\bigoplus_{n> 0} R_n$, then $H^i_{R_+}(M)$ is a $\frac{1}{p^e}\ZZ$-graded $R$-module. Moreover, $[H^i_{R_+}(M)]_{\frac{n}{p^e}}$ is a finitely generated $R_0$-module for every $n \in \ZZ$, and $H^i_{R_+}(M)_{\frac{n}{p^e}} = 0$ for $n \gg 0$ \cite[Theorem 16.1.5]{BroSharp}. 
 We define the {\it $a_i$-invariant of $M$} as
$$
a_i(M)=\max\left\{ \frac{n}{p^e} \; \bigg| \;  [H^i_{R_+}(M)]_{\frac{n}{p^e}} \neq 0\right\}
$$
if $H^i_{R_+}(M)\neq 0$, and $a_i(M)=-\infty$ otherwise.

\begin{remark} 
Given a finitely generated $\ZZ$-graded $R$-module $M$, we have that $a_i(M^{1/p^e}) = a_i(M)/p^e$ for all $i \in \NN$. In fact, $H^i_{R_+}(M^{1/p^e}) \cong H^i_{R_+}(M)^{1/p^e}$ since the functor $(-)^{1/p^e}$ is exact. 
\end{remark}



Given a finitely generated $\ZZ$-graded $R$-module, the {\it Castelnuovo-Mumford regularity of $M$} is defined as
$$
\reg(M)=\max\{a_i(M)+i \mid i \in \NN\}.
$$

\begin{remark}\label{remRegPR}
If $R=R_0[x_1,\ldots, x_r]$ is a polynomial ring over $R_0$, such that $x_i$ has degree $d_i>0$ for every $1\ls i\ls r$, then $\reg(R)= r-\sum_{i=1}^r d_i$.
\end{remark}



\begin{proposition}[{\cite[Theorem $3.5$]{DalzottoSbarra} \& \cite[Theorem $2.2$]{CutkoskyKurano}}]\label{PropDegGenReg}
Let $R=\oplus_{n\gs 0}R_n$ be a $\NN$-graded Noetherian ring.
Let  $d_1,\ldots, d_r$ be the generating degrees of $R$ as $R_0$-algebra. Let $M$ be a finitely generated $\ZZ$-graded $S$-module. Then, $ \alpha_S(M)\ls \reg(M) + \sum_{i=1}^r (d_i-1)$,  where $\alpha_S(M)$ denotes the top degree of a minimal homogeneous generator of $M$.
\end{proposition}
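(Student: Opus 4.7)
The plan is to access $\alpha_S(M)$ through local cohomology via the Čech–Koszul spectral sequence. Since a minimal homogeneous generator of $M$ in degree $n$ has nonzero image in $(M/S_+M)_n$, it suffices to bound the top nonzero degree of $M/S_+M$. Pick homogeneous generators $\mathbf{y}=y_1,\ldots,y_r$ of $S_+$ with $\deg y_i=d_i$; they form a regular sequence in $S$, so the Koszul complex $K_\bullet(\mathbf{y};S)$ is a free resolution of $R_0$, and therefore
\[
\operatorname{Tor}_0^S(R_0,M)\;=\;H_0(K_\bullet(\mathbf{y};M))\;=\;M/S_+M,\qquad K_p(\mathbf{y};M)=\bigoplus_{|I|=p}M\bigl(-\textstyle\sum_{i\in I}d_i\bigr).
\]

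Form the bounded double complex $K_\bullet(\mathbf{y};\Cech^\bullet(\mathbf{y};M))$. Running the spectral sequence that first computes Koszul homology degenerates: for $q\geq 1$ each column $\Cech^q(\mathbf{y};M)$ is a direct sum of localizations $M_{y_I}$ with $I\neq\emptyset$, and since some $y_i$ acts invertibly on such a localization its Koszul homology vanishes identically, so only the column $q=0$ contributes and the total cohomology is $\operatorname{Tor}_\bullet^S(R_0,M)$. The other spectral sequence has
\[
E_2^{p,q}\;=\;H_p\bigl(\mathbf{y};H^q_{S_+}(M)\bigr)
\]
and converges to the same total cohomology in degree $p-q$. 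Consequently $M/S_+M=\operatorname{Tor}_0^S(R_0,M)$ admits a finite filtration whose successive quotients are subquotients of $E_\infty^{p,p}\subseteq E_2^{p,p}$ for $p=0,\ldots,r$.

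To finish, I bound the top degree of each $E_2^{p,p}$. It is a subquotient of $K_p(\mathbf{y};H^p_{S_+}(M))=\bigoplus_{|I|=p}H^p_{S_+}(M)(-\sum_{i\in I}d_i)$, whose top nonzero degree is at most $a_p(M)+\max_{|I|=p}\sum_{i\in I}d_i$. From the definition of $\reg$, $a_p(M)\leq \reg(M)-p$; and since each $d_i\geq 1$,
\[
\sum_{i\in I}d_i\;=\;\sum_{i=1}^{r}d_i-\sum_{i\notin I}d_i\;\leq\;\sum_{i=1}^{r}d_i-(r-p).
\]
Adding these gives $a_p(M)+\sum_{i\in I}d_i\leq \reg(M)+\sum_{i=1}^{r}(d_i-1)$, uniformly in $p$, so the top degree of $M/S_+M$—and hence $\alpha_S(M)$—is bounded by $\reg(M)+\sum_{i=1}^{r}(d_i-1)$.

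The main technical point is setting up the double complex and tracking the degree shifts coming from the Koszul twists; the two facts used beyond this are routine, namely the vanishing of Koszul homology of a module on which some $y_i$ is a unit, and the inequality $a_p(M)\leq\reg(M)-p$ built into the definition of regularity.
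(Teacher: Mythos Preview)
The paper does not give its own proof of this proposition; it is quoted as background with references to Dalzotto--Sbarra and Cutkosky--Kurano. So there is nothing in the paper to compare against.

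Your argument is correct and is essentially the standard one for this inequality when $R_0$ is not a field (so that the characterisation of regularity via Betti numbers is not directly available). Two small remarks: first, you are implicitly using that $S$ is the polynomial ring $R_0[y_1,\ldots,y_r]$ introduced in the paragraph preceding the proposition, which is what makes $y_1,\ldots,y_r$ a regular sequence and the Koszul complex a free resolution of $R_0$; it would be worth saying this explicitly, since the statement itself mixes the letters $R$ and $S$. Second, writing ``$E_\infty^{p,p}\subseteq E_2^{p,p}$'' is a slight abuse---$E_\infty^{p,p}$ is a subquotient of $E_2^{p,p}$, not a submodule---but this is harmless here since you only need a bound on the top nonzero degree, which passes to subquotients.
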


\section{Castelnuovo-Mumford regularity and depth}

\subsection{Bounds on Castelnuovo-Mumford regularity}
To the best of our knowledge, the first result regarding the Castelnuovo-Mumford regularity via Frobenious is the bound obtained by Hochster and Roberts \cite{HRFpurity}.

\begin{theorem}[{\cite[Proposition 2.4]{HRFpurity}}]\label{ThmRegHR}
Let $S=\KK[x_1,\ldots,x_d]$ be a polynomial ring with positive grading on the variables, and $I\subseteq S$ be a homogeneous ideal such that $R=S/I$ is an $F$-finite and $F$-pure ring.
Then, $a_i(R)\leq 0$. In particular,
 $\reg(R)\leq \dim(R)$.
\end{theorem}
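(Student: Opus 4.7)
The plan is to use $F$-purity to obtain a degree-preserving split injection of $H^i_{R_+}(R)$ into a module whose top nonzero graded piece sits in degree $a_i(R)/p^e$, and then let $e \to \infty$ to force $a_i(R) \leq 0$.

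In more detail, since $R$ is $F$-finite and $F$-pure, Remark \ref{FpureFsplit} produces, for every $e \geq 1$, an $R$-linear splitting $\phi \colon R^{1/p^e} \to R$ of the natural inclusion $\iota \colon R \hookrightarrow R^{1/p^e}$. The inclusion $\iota$ is homogeneous of degree zero when $R^{1/p^e}$ is equipped with its $\frac{1}{p^e}\NN$-grading, since a homogeneous $r \in R_d$ is identified with $(r^{p^e})^{1/p^e}$ of degree $p^e d / p^e = d$. Applying $H^i_{R_+}(-)$ preserves split injections, so we obtain a graded degree-zero injection
$$H^i_{R_+}(R) \hookrightarrow H^i_{R_+}(R^{1/p^e}) \cong H^i_{R_+}(R)^{1/p^e},$$
where the second isomorphism uses the exactness of $(-)^{1/p^e}$ and the fact that $\sqrt{R_+ R^{1/p^e}} = \sqrt{(R^{1/p^e})_+}$. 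By the remark on $a_i$-invariants of $p^e$-th root modules in Subsection \ref{subLocCoh}, the top nonzero degree of the right-hand module equals $a_i(R)/p^e$. Since a degree-zero injection cannot raise the top nonzero degree of the source above that of the target, we obtain $a_i(R) \leq a_i(R)/p^e$ for every $e \geq 1$. As $a_i(R)$ is a fixed rational number (in fact an integer when $H^i_{R_+}(R) \neq 0$), this is possible only when $a_i(R) \leq 0$. The bound $\reg(R) \leq \dim R$ is then immediate from $\reg(R) = \max_i \{a_i(R) + i\}$ together with the vanishing $H^i_{R_+}(R) = 0$ for $i > \dim R$.

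The step requiring most care is verifying the isomorphism $H^i_{R_+}(R^{1/p^e}) \cong H^i_{R_+}(R)^{1/p^e}$ as $\frac{1}{p^e}\NN$-graded modules. This amounts to tracking the fractional grading through the \v{C}ech complex representation of local cohomology and checking that the identifications are compatible with the conventions for $(-)^{1/p^e}$ introduced in Subsection \ref{secMprime}; once this is done, the remainder of the proof is essentially a one-line comparison of top degrees.
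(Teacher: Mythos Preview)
Your argument is correct and is essentially the same as the paper's, just packaged differently. Both proofs rest on the single observation that $F$-purity makes the Frobenius-induced map on $H^i_{R_+}(R)$ injective while multiplying degrees by $p$: the paper phrases this by tracking an element $v$ of positive degree $\theta$ and noting that $F^e(v)$ lands in degree $p^e\theta$, contradicting the Artinian vanishing $[H^i_{R_+}(R)]_t=0$ for $t\gg 0$; you phrase the same thing as the inequality $a_i(R)\le a_i(R)/p^e$ coming from the graded split inclusion $H^i_{R_+}(R)\hookrightarrow H^i_{R_+}(R)^{1/p^e}$. Your formulation is in fact exactly the template the paper uses for the sharper bound in Theorem~\ref{ThmRegDSNB}, specialized to the trivial case where no extra factor $f$ is inserted.
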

\begin{proof}
Since $R$ is $F$-pure, for all $i \in \ZZ$ we have that the map induced by Frobenius  on $H^i_\m(R)$ is injective, and so is any of its iterations $F^e:H^i_\m(R) \to H^i_\m(R)$.
If there exists $v\in H^i_\m(R)$ of positive degree $\theta$, then $F^e(v)\neq 0$ and it has degree $p^e \theta$ for every $e\geq 0$. This contradicts the fact that $[H^i_\m(R)]_t=0$ for $t\gg 0$, since $H^i_\m(R)$ is an Artinian $R$-module. It follows that $a_i(R)\leq 0$. Since $\reg(R)=\max\{a_i(R)+i \mid i \in \NN\},$ we have that $\reg(R)\leq \dim(R)$ by Grothendieck Vanishing Theorem.
\end{proof}

We point out that the conclusion of Theorem \ref{ThmRegHR} also holds for $F$-injective rings, as the key element in its proof is the injectivity of Frobenius on local cohomology modules. In particular, this applies to Stanley-Reisner rings over any field, because the proof of this result can be reduced to the case of a polynomial ring over a perfect field of prime characteristic \cite{HHCharZero}. As a consequence, we recover a bound for the regularity of quotient rings by a monomial edge ideal.

\begin{corollary}\label{CorRegEdge}
Let $G$ be a simple graph on $[d]$, $S=\KK[x_1,\ldots,x_d]$ be a polynomial ring, and $I_G$ be the monomial edge ideal of $G$.
Then,
$$
\reg(S/I_G)\leq \max\{|W| \; | \; W\subseteq [d] \hbox{ is independent }\}.
$$
\end{corollary}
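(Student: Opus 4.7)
The plan is to chain together three facts already available in the excerpt: the identification of $I_G$ with a Stanley--Reisner ideal, the $F$-pure (or $F$-injective) property of Stanley--Reisner rings, and the combinatorial description of $\dim(S/I_G)$ given in Corollary~\ref{ThmDimMat}.

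First I would invoke the Proposition immediately preceding Corollary~\ref{ThmDimMat}, which says $I_G = I_{\Delta(G)}$, where $\Delta(G)$ is the independence complex of $G$. Thus $S/I_G$ is a Stanley--Reisner ring, and in particular it is defined by a squarefree monomial ideal. The next step is to observe that such rings are $F$-pure in positive characteristic. This is a standard application of Fedder's criterion (Theorem~\ref{ThmFedderCriterion}): if $I$ is squarefree monomial with squarefree generators whose product is divisible by $x_1\cdots x_d$ when restricted to the support, one checks that $(x_1\cdots x_d)^{p-1}$ (or, more precisely, the product over the support) lies in $I^{[p]}:I$ but not in $\m^{[p]}$. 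Over a field of characteristic zero, one reduces modulo $p$ as mentioned in the paragraph following Theorem~\ref{ThmRegHR}, since the $F$-injectivity of the reductions is what the argument for Theorem~\ref{ThmRegHR} actually requires, and this property for Stanley--Reisner rings is characteristic-free via standard reduction mod~$p$ arguments (cf.\ \cite{HHCharZero}).

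With $S/I_G$ known to be $F$-pure (or at least $F$-injective in all characteristics after reduction), I would apply Theorem~\ref{ThmRegHR} directly to conclude
$$
\reg(S/I_G) \leq \dim(S/I_G).
$$
Finally, Corollary~\ref{ThmDimMat} identifies the dimension on the right with the maximum size of an independent set of $G$:
$$
\dim(S/I_G) = \max\{|W| \mid W \subseteq [d] \text{ is independent}\}.
$$
Combining these two inequalities yields the claimed bound.

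The main (minor) obstacle is the characteristic-zero case, since Theorem~\ref{ThmRegHR} is phrased for $F$-pure rings and the paper is working over an arbitrary field $\KK$. The parenthetical remark immediately following Theorem~\ref{ThmRegHR} already addresses this: the proof uses only injectivity of Frobenius on $H^i_\m(R)$, which is preserved under reduction mod~$p$ for Stanley--Reisner rings. So the bound transfers to any field, and the rest of the argument is formal.
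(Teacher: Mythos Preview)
Your proposal is correct and follows essentially the same route as the paper: combine Theorem~\ref{ThmRegHR} (reg $\leq$ dim for $F$-pure/$F$-injective rings) with Corollary~\ref{ThmDimMat} (dimension equals the maximum size of an independent set), and handle characteristic zero by reduction mod~$p$. The paper's proof is slightly terser and also mentions extending to the algebraic closure to cover non-perfect fields in prime characteristic, but the argument is the same.
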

\begin{proof}
If $\KK$ is a perfect field of prime characteristic, the claim  follows immediately from 
Theorems \ref{ThmDimMat} and \ref{ThmRegHR}. The result for any field of prime characteristic follows by extending to the algebraic closure, as regularity and dimension are not affected by this extension. 
The claim for fields of characteristic zero follows from reduction to characteristic $p$ \cite{HHCharZero}.
\end{proof}

Corollary \ref{CorRegEdge} was improved by bounding  $\reg(S/I_G)$ with  the matching number of $G$ \cite[Theorem 1.5]{HVT}. For a graph, this numbers is bounded above by $ \max\{|W| \; | \; W\subseteq [d] \hbox{ is independent }\}$.

Theorem \ref{ThmRegHR} result was recently improved by adding a relation with the $F$-pure threshold.

\begin{theorem}[{\cite[Theorem B]{DSNBFpurity}}]\label{ThmRegDSNB}
Let $S=\KK[x_1,\ldots,x_d]$ be a polynomial ring, and $I\subseteq S$ be a homogeneous ideal such that $R=S/I$ is an $F$-finite and $F$-pure ring.
Then, $a_i(R)\leq -\fpt(R)$. In particular,
$$\reg(R)\leq \dim(R)-\fpt(R).$$
\end{theorem}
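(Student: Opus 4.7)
The plan is to refine the Hochster--Roberts argument of Theorem \ref{ThmRegHR}. That proof used only the injectivity of Frobenius on $H^i_\m(R)$ together with the vanishing of $H^i_\m(R)$ in high degrees. To obtain the sharper bound $a_i(R)\leq -\fpt(R)$ one must extract quantitative information from homogeneous maps $R^{1/p^e}\to R$ that send $r_e^{1/p^e}\mapsto 1$ with $\deg r_e$ as large as the definition of the $F$-pure threshold permits.

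Set $\theta=a_i(R)$, which we may assume is finite (otherwise there is nothing to prove), and fix a nonzero homogeneous $\xi\in[H^i_\m(R)]_\theta$. By the definition of $b(p^e)$ and of $I_e(R)$, for each $e\geq 1$ one may choose a homogeneous element $r_e\in\m^{b(p^e)}$ of some degree $k_e\geq b(p^e)$, together with a homogeneous $R$-linear map $\varphi_e\colon R^{1/p^e}\to R$ satisfying $\varphi_e(r_e^{1/p^e})=1$; comparing degrees forces $\varphi_e$ to have degree $-k_e/p^e$. The twisted map $\psi_e(y):=\varphi_e(r_e^{1/p^e}y)$ is then a genuine degree zero splitting of $\iota\colon R\hookrightarrow R^{1/p^e}$: it is $R$-linear because multiplication by $r_e^{1/p^e}$ is $R^{1/p^e}$-linear, and $\psi_e(1)=\varphi_e(r_e^{1/p^e})=1$. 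This is the same trick that underlies several Fedder-type arguments.

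Passing to local cohomology, under the natural graded identification $H^i_\m(R^{1/p^e})\cong H^i_\m(R)^{1/p^e}$, the inclusion $\iota$ induces the map $\xi\mapsto F^e(\xi)^{1/p^e}$, and the splitting relation $\psi_e\circ\iota=\mathrm{id}$ gives
\[
\xi=(\psi_e)_*\bigl(F^e(\xi)^{1/p^e}\bigr)=(\varphi_e)_*\bigl(r_e^{1/p^e}\cdot F^e(\xi)^{1/p^e}\bigr)=(\varphi_e)_*\bigl((r_eF^e(\xi))^{1/p^e}\bigr).
\]
Since $\xi\neq 0$, the element $r_eF^e(\xi)\in H^i_\m(R)$ must be nonzero; being homogeneous of degree $k_e+p^e\theta$, maximality of $a_i(R)=\theta$ forces $k_e+p^e\theta\leq\theta$, i.e.\
\[
\theta\leq -\frac{k_e}{p^e-1}\leq -\frac{b(p^e)}{p^e-1}.
\]
Letting $e\to\infty$ and using $b(p^e)/p^e\to\fpt(R)$ together with $p^e/(p^e-1)\to 1$ yields $a_i(R)\leq -\fpt(R)$. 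The regularity bound then follows from $\reg(R)=\max_i\{a_i(R)+i\}$ together with Grothendieck vanishing $H^i_\m(R)=0$ for $i>\dim R$.

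The main point requiring care will be the bookkeeping behind the identification $H^i_\m(R^{1/p^e})\cong H^i_\m(R)^{1/p^e}$ as $\tfrac{1}{p^e}\ZZ$-graded $R$-modules, verifying that under it $\iota_*$ becomes Frobenius (so $\iota_*(\xi)=F^e(\xi)^{1/p^e}$) and the $R^{1/p^e}$-action takes the form $r_e^{1/p^e}\cdot m^{1/p^e}=(r_em)^{1/p^e}$; once these compatibilities are nailed down, the displayed chain of equalities and the subsequent limit are purely formal.
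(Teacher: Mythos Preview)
Your proof is correct and follows essentially the same approach as the paper's. The paper's presentation is slightly more streamlined: rather than tracking a specific element $\xi\in[H^i_\m(R)]_\theta$, it observes directly that the map $R(-b_e/p^e)\to R^{1/p^e}$, $1\mapsto f_e^{1/p^e}$, is a split graded injection (taking $f_e\in\m^{b_e}$ of degree exactly $b_e$), hence $a_i(R)+b_e/p^e\le a_i(R^{1/p^e})=a_i(R)/p^e$, which rearranges to your inequality $\theta\le -b_e/(p^e-1)$; your elementwise argument is just an explicit verification of this $a$-invariant comparison.
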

\begin{proof}
Let $\m$ denote the maximal homogeneous ideal of $R$.
We set $b_e=\max\{ \ell\in\NN \; |\;  \m^\ell \not\subseteq I_e\}$.
Then, there exists $f_e \in \m^{b_e}$, of degree $b_e$,  such that $f\iota_e:R(-\frac{b_e}{p^e})\to R^{1/p^e}$ splits, where $\iota_e:R\to R^{1/p^e}$ denotes the natural inclusion.  Then, the induced map
$$
H^i_\m(R(-\frac{b_e}{p^e}))\to H^i_\m(R).
$$
splits, and so, it is injective.
Then,
$$
a_i(R)+\frac{b_e}{p^e}\leq a_i(R^{1/p^e})=\frac{a_i(R)}{p^e}.
$$
By taking limits as $e$ goes to infinity, we obtain that $a_i(R)+\fpt(R)\leq 0$, which implies that 
$a_i(R)\leq -\fpt(R)$ and thus $\reg(R)\leq \dim(R)-\fpt(R).$
\end{proof}

 Let $d\in \ZZ_{>0}$ and $\mathcal{A}=\{\alpha_1,\ldots, \alpha_r\}\subseteq \ZZ^d$ be a  subset. We denote by $\NN\mathcal{A}$ the semigroup generated by $\mathcal{A}$, i.e., 
 $$\NN\mathcal{A}=\{n_1\alpha_1+\cdots + n_r\alpha_r \mid n_1,\ldots, n_r\in \NN\}.$$
Let $\KK$ be an arbitrary field. For each $\alpha\in \NN\mathcal{A}$ we consider the monomial $t^\alpha\in \KK[t_1^{\pm},\ldots, t_d^{\pm}]$.  We note that the set $\{t^\alpha\mid \alpha\in \NN\mathcal{A}\}$ spans the algebra $\KK[\mathcal{A}]:=\KK[t^{\alpha_1},\ldots, t^{\alpha_r}]\subseteq \KK[t_1^{\pm},\ldots, t_d^{\pm}]$ as a $\KK$-vector space. Let $x_1,\ldots, x_r$ be indeterminates and consider the $\KK$-algebra map 
$$\pi:\KK[x_1,\ldots, x_r]\longrightarrow \KK[t_1^{\pm},\ldots, t_d^{\pm}]$$
that sends $x_i$ to $t^{\alpha_i}$ for $1\ls i\ls r$.  Clearly the image of $\pi$ is the algebra  $\KK[\mathcal{A}]$. 
The kernel of $\pi$, denoted by $I_{\mathcal{A}}$, is the {\it toric ideal} of $\mathcal{A}$. The ideal $I_{\mathcal{A}}$ is prime and binomial, i.e., generated by binomials. The variety $X_{\mathcal{A}}:=V(I_{\mathcal{A}})\subseteq \KK^r$ is the {\it affine toric variety} associated to $\mathcal{A}$.

Let $\cone(\mathcal{A})=\RR_{\gs 0}\mathcal{A}\subseteq \RR^d$ be the cone spanned by 
 $\mathcal{A}$. The semigroup $\NN \mathcal{A}$ is {\it normal} if $\NN \mathcal{A}=\ZZ \mathcal{A}\cap \cone(\mathcal{A}) $. This condition is equivalent to the affine toric variety $X_{\mathcal{A}}$ being normal and to $\KK[\mathcal{A}]$ being integrally closed in its field of fractions \cite[Proposition 13.5]{StuGBCP}.

As an immediate consequence of the previous result, we obtain the following bound for  the regularity of toric ideals.

\begin{theorem}[{\cite[Theorem 13.14]{StuGBCP}}]
Let $d\in \ZZ_{>0}$ and $\mathcal{A}\subseteq \ZZ^d$ a  finite set such that  $\NN \mathcal{A}$ is a normal semigroup and such that  $I_{\mathcal{A}}\subseteq S=\KK[x_1,\ldots,x_r]$ is homogeneous. 
Then,
$$ 
\reg_S(S/I_{\mathcal{A}})\leq d-1.
$$
In particular, $I_{\mathcal{A}}$ is generated in degree at most $d$.
\end{theorem}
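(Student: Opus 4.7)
My plan is to identify $R:=S/I_{\mathcal{A}}\cong \KK[\mathcal{A}]$ as a normal affine semigroup ring and combine Theorem~\ref{ThmRegHR} with the Danilov--Stanley description of its canonical module. First, I would establish that $R$ is $F$-pure in positive characteristic by exhibiting an explicit Frobenius splitting. Using the normality of $\NN\mathcal{A}$, one has the $R$-module decomposition
\[
R^{1/p} \;=\; \bigoplus_{[\beta]\in \tfrac{1}{p}\ZZ\mathcal{A}/\ZZ\mathcal{A}} M_{[\beta]}, \qquad M_{[\beta]}=\bigoplus_{\substack{\beta'\in\tfrac{1}{p}\NN\mathcal{A}\\ [\beta']=[\beta]}}\KK\cdot t^{\beta'},
\]
and normality (which gives $p\ZZ\mathcal{A}\cap\cone(\mathcal{A})=p\NN\mathcal{A}$) forces $M_{[0]}=R$, so $R$ is a direct summand of $R^{1/p}$. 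The characteristic zero case follows from the positive-characteristic one by upper semicontinuity of regularity under reduction modulo $p$.

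Next, by Hochster's theorem $R$ is Cohen--Macaulay, hence $\reg(R)=a_n(R)+n$ with $n:=\dim R=\Rank\ZZ\mathcal{A}\leq d$. Applying Theorem~\ref{ThmRegHR} to the $F$-pure ring $R$ gives $a_n(R)\leq 0$, and therefore $\reg(R)\leq n$; this already proves the theorem when $n\leq d-1$. In the remaining case $n=d$, I would extract the sharper bound $a_n(R)\leq -1$ from the Danilov--Stanley formula
\[
\omega_R \;\cong\; \bigoplus_{\alpha\in \mathrm{int}(\cone(\mathcal{A}))\cap \NN\mathcal{A}} \KK\cdot t^{\alpha}.
\]
The homogeneity of $I_{\mathcal{A}}$ with respect to the standard grading of $S$ is equivalent to the existence of a linear functional $\lambda$ on $\ZZ^d$ with $\lambda(\alpha_i)=1$ for every $i$; for $\alpha\in\NN\mathcal{A}$ the degree of $t^{\alpha}$ then equals $\lambda(\alpha)$, a non-negative integer which vanishes only at $\alpha=0$. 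Since $0$ does not lie in the interior of $\cone(\mathcal{A})$, every generator of $\omega_R$ has degree at least $1$, forcing $a_n(R)\leq -1$ and $\reg(R)\leq d-1$.

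The main obstacle is this final sharpening from $a_n(R)\leq 0$ to $a_n(R)\leq -1$ in the full-dimensional case: pure $F$-purity does not suffice for the $-1$ shift (for instance Stanley--Reisner rings, which are also $F$-pure, can have $a_n=0$), so the toric-specific input of Danilov--Stanley is genuinely needed. Once the regularity bound is established, the ``in particular'' statement follows from the elementary identity $\reg(I_{\mathcal{A}})=\reg(S/I_{\mathcal{A}})+1\leq d$, so that $I_{\mathcal{A}}$ is generated in degree at most $d$.
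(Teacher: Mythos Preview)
Your proof is correct, but it takes a genuinely different route from the paper's. The paper does not invoke Danilov--Stanley or Hochster's Cohen--Macaulayness theorem at all; instead it observes that $R=\KK[\mathcal{A}]$, being a direct summand of a polynomial ring, is strongly $F$-regular, and hence has $\fpt(R)>0$. It then applies Theorem~\ref{ThmRegDSNB} (the $F$-pure-threshold sharpening of Hochster--Roberts) to obtain $\reg(R)\le\dim(R)-\fpt(R)<d$, and concludes $\reg(R)\le d-1$ because regularity is an integer. Your argument replaces this $F$-singularity input with the combinatorial description of $\omega_R$: Cohen--Macaulayness collapses $\reg(R)$ to the single top $a$-invariant, and the fact that the interior of a pointed cone misses the origin yields $a_n(R)\le -1$ directly.

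Each approach has its merits. The paper's proof fits the theme of the survey---exhibiting the $F$-pure threshold as a device for squeezing out the extra $-1$---and works uniformly without splitting on $n=\dim R$. Your approach is more classical and in fact characteristic-free once you commit to Danilov--Stanley: the $F$-purity paragraph and the case $n\le d-1$ are actually unnecessary, since the canonical-module argument (using the relative interior when $n<d$) already gives $a_n(R)\le -1$ and hence $\reg(R)=a_n(R)+n\le n-1\le d-1$ over any field. So your detour through Theorem~\ref{ThmRegHR} and reduction modulo $p$ is redundant, though harmless.
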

\begin{proof}
Let $R=S/I_{\mathcal{A}}=\KK[\mathcal{A}]$.
We  first assume that $\KK$ has prime characteristic.
We note that $\dim(R)=d$.
Since $R$ is a direct summand of a polynomial ring \cite[Lemma 1]{HochsterTori}, it is an strongly $F$-regular ring \cite[Theorem 3.1]{HoHuStrong}. Since $R$ is an strongly $F$-regular ring of positive dimension, $\fpt(R)>0$.
Then, by Theorem \ref{ThmRegDSNB}, we have that
$$
\reg_S(S/I_{\mathcal{A}})\leq d-\fpt(R).
$$
Hence, 
$\reg_S(S/I_{\mathcal{A}})\leq d-1.$
As a consequence,
$$\reg_S(I_{\mathcal{A}})=\reg_S(S/I_{\mathcal{A}})+1\leq d,$$
and so,
$I_{\mathcal{A}}$ is generated in degree at most $d$.
The result in characteristic zero follows from reduction to prime characteristic \cite{HHCharZero}.
\end{proof}

Methods in prime characteristic have also played a role in classifying graphs whose binomial edge ideals are Gorenstein. 
We first need a result that guarantees that the singularities defined by an ideal with a squarefree Gr{\"o}bner deformation are at least $F$-injective.

 \begin{theorem}[{\cite[Theorem 5.2]{GMBEI} and \cite[Corollary 4.11]{KV}}]\label{ThmGoebnerFinj}
Let $S=\KK[x_1,\ldots,x_d]$ be a standard graded polynomial ring over a field, $\KK$, of prime characteristic.
Let $I$ be a homogeneous ideal and $<$ a monomial order such that $\IN_<(I)$ is squarefree.
Then, $S/I$ is $F$-injective.
\end{theorem}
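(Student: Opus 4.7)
The plan is to transfer $F$-injectivity from the special fiber to the generic fiber of the Gr\"obner deformation built in Remark \ref{RemNotation}. Choose a weight $\omega \in \ZZ_{>0}^d$ provided by Theorem \ref{thm initial weight} so that $\IN_\omega(I) = \IN_<(I)$, set $T = S[t]$ and $J = {\rm hom}_\omega(I)$, and let $R = T/J$ with graded maximal ideal $\n$. By Lemma \ref{lemma regular t,t-1}, $R$ is a $\KK[t]$-flat graded algebra with $R/tR \cong S/\IN_<(I)$ and $R/(t-1)R \cong S/I$.

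Since $\IN_<(I)$ is squarefree by hypothesis, $S/\IN_<(I)$ is a Stanley-Reisner ring. Applying Fedder's Criterion (Theorem \ref{ThmFedderCriterion}) with the witness $(x_1 \cdots x_d)^{p-1}$, which lies in $\IN_<(I)^{[p]}:\IN_<(I)$ but outside $\m^{[p]}$, shows that $S/\IN_<(I)$ is $F$-pure; in particular, the Frobenius endomorphism acts injectively on each $H^i_\m(S/\IN_<(I))$.

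The core of the argument is to prove that $R$ is \emph{fiber-full}, meaning that multiplication by $t$ is injective on every local cohomology module $H^i_\n(R)$. Given this, the long exact sequence attached to $0 \to R \xrightarrow{t} R \to R/tR \to 0$ collapses into isomorphisms
\[
H^i_\n(R)/tH^i_\n(R) \;\cong\; H^i_\m\bigl(S/\IN_<(I)\bigr).
\]
Because each $H^i_\n(R)$ is then a graded torsion-free $\KK[t]$-module, Remark \ref{rem flat k[t]} gives that $t-1$ is also a non-zero divisor on it, and the parallel analysis for $0 \to R \xrightarrow{t-1} R \to R/(t-1)R \to 0$ yields $H^i_\n(R)/(t-1)H^i_\n(R) \cong H^i_\m(S/I)$. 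Since Frobenius acts naturally on the whole family $R$, a diagram chase combined with a Nakayama argument in the parameter $t$ lifts its injectivity from the special fiber to $H^i_\n(R)$ and then descends it to the generic fiber, establishing that $S/I$ is $F$-injective.

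The main obstacle is the fiber-full property in the third paragraph. For a general initial ideal the modules $H^i_\n(R)$ can carry $t$-torsion, which would destroy the deformation argument. It is precisely the squarefreeness of $\IN_<(I)$ that rules this out, and one proves this through a Hilbert-series comparison: the equality of Hilbert series of $S/I$ and $S/\IN_<(I)$ propagates to compatible Hilbert series of the local cohomology modules of the fibers, and in the squarefree case these estimates are tight enough to force the $t$-torsion in $H^i_\n(R)$ to vanish. Once fiber-fullness is in place, the remaining steps are essentially formal.
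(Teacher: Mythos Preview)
The paper does not supply its own proof of this theorem; it is only cited from \cite{GMBEI} and \cite{KV}. Evaluating your proposal on its merits, there are two genuine errors that block the argument.

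First, the central claim that multiplication by $t$ is \emph{injective} on $H^i_\n(R)$ is impossible whenever that module is nonzero: since $t$ lies in the homogeneous maximal ideal $\n=(x_1,\ldots,x_d,t)$, every element of $H^i_\n(R)$ is annihilated by a power of $t$. What the squarefree hypothesis actually buys (via $F$-splitness $\Rightarrow$ cohomological fullness, Theorem~\ref{thm Fsplit is CF}, and then Proposition~\ref{prop Ext flat}) is that $t$ acts \emph{surjectively} on $H^i_\n(R)$, equivalently that $t$ is regular on each $\Ext^i_T(R,T)$. Your ``Hilbert-series comparison'' is not the mechanism; the argument goes through cohomological fullness, not numerics.

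Second, even with the corrected direction, the specialization at $t=1$ collapses: the image of $\n$ in $R/(t-1)R\cong S/I$ is the unit ideal, so $H^i_\n(R/(t-1)R)=0$ and there is no isomorphism $H^i_\n(R)/(t-1)H^i_\n(R)\cong H^i_\m(S/I)$. To compare the two fibers one must either work on the $\Ext$ side as in the proof of Theorem~\ref{thm Conca Varbaro}, or take local cohomology with support in $(x_1,\ldots,x_d)T$ only. The passage of $F$-injectivity through the family is also not a bare diagram chase: one route is that $F$-purity of $R/tR$ gives $F$-fullness, so $F$-injectivity deforms from $R/tR$ to $R$ (as in \cite{HMS,MaQuy}), and then localizes at $t-1$; alternatively one tracks the Frobenius action directly on the flat $\k[t]$-family of $\Ext$ modules.
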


\begin{corollary}
Let $G$ be a simple graph on $[d]$, 
  $S=\KK[x_1,\ldots,x_d,y_1,\ldots,y_d]$, and $J_G$ be the binomial edge ideal of $G$. 
  Then, $S/J_G$ is $F$-injective.
\end{corollary}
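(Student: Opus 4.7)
The plan is to combine the two theorems stated immediately before the corollary, making the proof essentially a one-line reduction. First I would observe that the binomial edge ideal $J_G$ is homogeneous in $S$ with respect to the standard grading, since each of its natural generators $x_i y_j - x_j y_i$ has degree $2$. The theorem of Herzog-Hibi-Hreinsd\'ottir-Kahle-Rauh and Ohtani stated immediately above the corollary then produces a monomial order $<$ on $S$ for which $\IN_<(J_G)$ is a squarefree monomial ideal---this is precisely the hypothesis required to invoke the squarefree Gr\"obner deformation machinery.

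With that monomial order in hand, I would apply Theorem \ref{ThmGoebnerFinj} directly to the homogeneous ideal $I = J_G$ together with $<$, and conclude that $S/J_G$ is $F$-injective. The statement implicitly assumes that $\KK$ has prime characteristic, since $F$-injectivity is only defined in that setting; under that standing assumption there is no further work to be done, as the flatness of the Gr\"obner deformation reduces $F$-injectivity of $S/J_G$ to $F$-injectivity of $S/\IN_<(J_G)$, and the latter is a Stanley-Reisner ring, hence $F$-pure and in particular $F$-injective.

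Since this is a direct two-step application, there is no genuine obstacle at the level of the corollary itself: the substantive work is packaged inside the two imported theorems. The first imported result requires a careful combinatorial/Gr\"obner argument identifying a monomial order (typically the lexicographic order induced by $x_1 > \cdots > x_d > y_1 > \cdots > y_d$) under which the $2 \times 2$ minors of the generic $2 \times d$ matrix degenerate to a squarefree monomial ideal encoded by admissible paths in $G$. The second imported result is the deformation-theoretic statement transferring $F$-injectivity from $S/\IN_<(I)$ back to $S/I$, whose proof uses fiber-full / Frobenius-splitting techniques in the style of Conca-Varbaro and Kim-Varbaro. Neither of these is something I would reprove here.
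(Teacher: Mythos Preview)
Your proposal is correct and matches the paper's intended argument: the corollary is stated without proof precisely because it follows immediately by combining the theorem of Herzog--Hibi--Hreinsd\'ottir--Kahle--Rauh and Ohtani (giving a squarefree initial ideal for $J_G$) with Theorem~\ref{ThmGoebnerFinj}. There is nothing further to add.
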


We are ready to characterize the graphs that give Gorenstein ideals.

\begin{theorem}[{\cite[Theorem A]{GMBEI}}]\label{Thm Gor BEI}
Let $G$ be a connected graph on $[d]$, $S=\KK[x_1,\ldots,x_d,y_1,\ldots, y_d]$, and $J_G$ be the binomial edge ideal associated to $G$.
If $S/J_G$ is Gorenstein, then $G$ is a path.
\end{theorem}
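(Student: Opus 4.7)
The strategy is to force $R := S/J_G$ to be $F$-injective via the squarefree Gr\"obner degeneration provided by the theorem of Herzog--Hibi--Hreind\'ottir--Kahle--Rauh and Ohtani together with Theorem \ref{ThmGoebnerFinj}, and then to use the resulting bound on the $a$-invariants (as in Theorem \ref{ThmRegHR}) as an algebraic straightjacket which, combined with the Gorenstein hypothesis, leaves only paths as possibilities. Since Gorensteinness is preserved under reduction modulo a generic prime, I would assume at the outset that $\KK$ has characteristic $p>0$ and is $F$-finite.

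First, I would invoke the squarefree Gr\"obner degeneration to produce a monomial order $<$ such that $\IN_<(J_G)$ is squarefree, and apply Theorem \ref{ThmGoebnerFinj} to conclude that $R$ is $F$-injective. Because $R$ is Gorenstein, it is in particular Cohen--Macaulay of dimension $n := \dim R$, so $\reg(R) = a_n(R) + n$; the same argument as in the proof of Theorem \ref{ThmRegHR} (which uses only injectivity of Frobenius on the top local cohomology module) gives $a_n(R) \leq 0$, and hence
\[
\reg(R) \leq \dim(R).
\]
Since the Hilbert function is preserved under Gr\"obner deformation, this bound transfers to $\KK[\Delta] = S/\IN_<(J_G)$, which is Cohen--Macaulay with the same $h$-vector as $R$; and that $h$-vector is symmetric because $R$ is Gorenstein.

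Second, I would translate these algebraic facts into combinatorial constraints on $G$. The facets of $\Delta$ admit an explicit description in terms of \emph{admissible paths} of $G$, and $\dim R$ is read off as the size of a maximum such facet. The hypotheses ``$\reg R \leq \dim R$'', ``$h$-vector of $\KK[\Delta]$ symmetric'', and ``Cohen--Macaulay type equal to $1$'' (the Gorenstein property, which by upper-semicontinuity of graded Betti numbers under Gr\"obner deformation forces the last Betti number of $\KK[\Delta]$ to be at most $1$, and hence exactly $1$) place severe restrictions on how many facets $\Delta$ can have and on their pairwise intersection pattern. A branching vertex of degree $\geq 3$, or an induced cycle of length $\geq 3$, in $G$ generates admissible paths that multiply the facets of $\Delta$ in a way that either breaks the symmetry of $h(\KK[\Delta])$ or pushes the last Betti number above $1$, in either case contradicting the Gorenstein hypothesis on $R$.

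The main obstacle lies in this second step: the Frobenius input delivers the clean inequality $\reg(R)\leq \dim(R)$ and the type-one consequence on $\KK[\Delta]$ essentially for free, but the bulk of the work is combinatorial, amounting to a careful case analysis on admissible-path facets of $\Delta$ that rules out every connected graph other than a path. Sharpening the algebraic ``Gorenstein plus $F$-injective'' package to the combinatorial conclusion ``no branching vertex and no induced cycle'', i.e.\ ``$G$ is a path'', is where the explicit combinatorics of binomial edge ideals does the heavy lifting.
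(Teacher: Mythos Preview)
Your Frobenius step extracts too little: the inequality $\reg(R)\leq\dim(R)$ holds for \emph{every} binomial edge ideal (they all have squarefree initial ideal), so it does not discriminate paths from anything else, and your second step is then left to do the entire classification by hand on the facets of $\Delta$---which you acknowledge as ``the main obstacle'' and do not actually carry out. There is also a sign error: upper-semicontinuity gives $\beta_{i,j}(S/J_G)\leq\beta_{i,j}(S/\IN_<(J_G))$, so Gorensteinness of $R$ bounds the last Betti number of $\KK[\Delta]$ from \emph{below}, not from above; to salvage this you would need the Conca--Varbaro theorem (Theorem \ref{thm Conca Varbaro}), not bare semicontinuity.

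The paper's argument goes in the opposite direction and is much sharper. It first observes that, by \cite[Theorem 3.4]{KM16}, it suffices to prove the \emph{lower} bound $\reg(S/J_G)\geq d-1$. To get this, it upgrades $F$-injectivity to $F$-purity using the Gorenstein hypothesis (Fedder's result \cite[Lemma 3.3]{FedderFputityFsing}), and then applies Theorem \ref{ThmRegDSNB}, which for $F$-pure rings gives the \emph{equality} $\reg(R)=\dim(R)-\fpt(R)$ (since $R$ is Cohen--Macaulay here). Because $G$ is connected, $J_{K_d}$ is a minimal prime of $J_G$, so $\dim(R)=d+1$ and $\fpt(R)\leq\fpt(S/J_{K_d})=2$; hence $\reg(R)\geq(d+1)-2=d-1$. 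The combinatorics is thus entirely encapsulated in the cited bound $\reg(S/J_G)\leq d-1$ with equality iff $G$ is a path, and the Frobenius input is the $F$-pure threshold estimate, not the coarse $a_i\leq 0$ bound you used.
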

\begin{proof}
It suffices to show that $\reg(S/J_G)\geq d-1$ \cite[Theorem 3.4]{KM16}.
We first assume that $\KK$ has prime characteristic.
We have that  $S/J_G$ is $F$-injective by  Theorem \ref{ThmGoebnerFinj}. Since $S/J_G$ is a Gorenstein ring, we have that $S/J_G$ is also $F$-pure \cite[Lemma 3.3]{FedderFputityFsing}.
We have that  $J_{K_d}$ is a minimal prime over $J_G$, because  $G$ is connected \cite[Corollary 3.9]{HHHKR10}. Note that $J_{K_d}$ is the ideal of minors of a $2 \times d$ generic matrix. In particular, since $S/J_G$ is equidimensional, we have that $\dim(S/J_G) = \dim(S/J_{K_d}) = d+1$. Then,
$$
			\reg(S/J_G)=\dim(S/J_G)-\fpt(S/J_G)\geq(d+1)-2=d-1,
$$
	because  $\fpt(S/J_G)\leq\fpt(S/J_{K_n})=2$ \cite[Theorem 4.7]{DSNBFpurity}.
		The same inequality in characteristic zero follows from reduction to prime characteristic \cite{HHCharZero}.
\end{proof}

\subsection{Bounds on depth}

The Peskine-Szpiro Vanishing Theorem is an important result  in local cohomology that only 
works in prime characteristic.

\begin{theorem}[{\cite[Proposition 4.1 and remark afterwards]{PS}}]\label{ThmVanishingPS}
Let $S$ be a regular local ring in prime characteristic, and $I\subseteq S$ be an ideal.
Then, $\cd(I)\leq\dim(S)- \Depth(S/I)$.
\end{theorem}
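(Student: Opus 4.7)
The plan is to reduce the statement to a bound on projective dimension, and then exploit Kunz's theorem (flatness of Frobenius on regular rings) to transfer vanishing of $\Ext$ into vanishing of local cohomology.

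First, since $S$ is a regular local ring and $S/I$ is a finitely generated $S$-module, Auslander-Buchsbaum gives
\[
\pd_S(S/I) = \depth(S) - \depth(S/I) = \dim(S) - \depth(S/I).
\]
Thus the inequality to prove becomes $\cd(I) \leq \pd_S(S/I)$. I would therefore aim to show that $\Ext^i_S(S/I, S)$ vanishes ``persistently enough'' under Frobenius to force vanishing of $H^i_I(S)$ for $i > \pd_S(S/I)$.

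The next step uses that $\{I^{[p^e]}\}_{e \gs 0}$ is a cofinal system with $\{I^n\}_{n \gs 0}$: if $I$ is generated by $u$ elements, then $I^{[p^e]} \subseteq I^{p^e}$ and $I^{up^e} \subseteq I^{[p^e]}$, so
\[
H^i_I(S) \;\cong\; \varinjlim_{e} \Ext^i_S\bigl(S/I^{[p^e]},\, S\bigr).
\]
It is therefore enough to show that each $\Ext^i_S(S/I^{[p^e]},S)$ vanishes for $i > \pd_S(S/I)$.

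Here enters the key ingredient. By Kunz's theorem, in characteristic $p > 0$ the Frobenius endomorphism $F \colon S \to S$ is flat precisely because $S$ is regular, so the Peskine-Szpiro functor $F^{e,*}(-) := - \otimes_S {}^eS$ is exact. Applying $F^{e,*}$ to a minimal free resolution $\mathbb{F}_\bullet \to S/I$ produces a (not necessarily minimal) free resolution of $F^{e,*}(S/I) = S/I^{[p^e]}$ of the same length. Consequently
\[
\pd_S\bigl(S/I^{[p^e]}\bigr) \;\leq\; \pd_S(S/I) \quad \text{for every } e \gs 0,
\]
and so $\Ext^i_S(S/I^{[p^e]}, S) = 0$ for all $i > \pd_S(S/I)$ and all $e$.

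Combining the two previous steps, $H^i_I(S)$ is a direct limit of zero modules for $i > \pd_S(S/I)$, hence vanishes. Therefore
\[
\cd(I) \;\leq\; \pd_S(S/I) \;=\; \dim(S) - \depth(S/I),
\]
as desired. The conceptual heart of the argument, and the only place where the characteristic hypothesis is essential, is the flatness of Frobenius that underwrites $\pd_S(S/I^{[p^e]}) \leq \pd_S(S/I)$; the rest is the formal cofinality of the Frobenius powers inside local cohomology together with Auslander-Buchsbaum.
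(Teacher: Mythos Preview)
Your proof is correct and follows exactly the approach the paper indicates: the paper does not give a detailed argument but only remarks that the result ``follows from the flatness of Frobenius for regular rings in prime characteristic,'' and your write-up supplies precisely those details via Kunz's theorem, cofinality of Frobenius powers, and Auslander--Buchsbaum.
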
 

The proof of Theorem \ref{ThmVanishingPS} follows from the flatness of Frobenious for regular rings in prime characteristic. The same result can be obtained for monomial ideals in $S=\KK[x_1,\ldots,x_d]$, without any assumptions on $\KK$. The proof is similar to the one of Theorem \ref{ThmVanishingPS} using in the Frobenous-like map of $\KK$-algebras $\phi:S\to S$ defined by $x_i\mapsto x^m$ for some $m\geq 1.$

\begin{corollary}
Let $S=\KK[x_1,\ldots,x_d]$ be a polynomial ring over any field $\KK$, and $I\subseteq S$ be a monomial ideal.
Then, $\cd(I)\leq \dim(S)- \Depth(S/I)$.
\end{corollary}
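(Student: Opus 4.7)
The plan is to replicate the proof of the Peskine-Szpiro Vanishing Theorem (Theorem \ref{ThmVanishingPS}), with the Frobenius endomorphism replaced by the $\KK$-algebra map $\phi_m\colon S \to S$, $x_i \mapsto x_i^m$. Unlike Frobenius, this endomorphism is defined and flat over \emph{any} field $\KK$: the ring $S$ is a free module of rank $m^d$ over its subring $\phi_m(S)=\KK[x_1^m,\ldots,x_d^m]$, with basis $\{x_1^{a_1}\cdots x_d^{a_d} : 0 \leq a_i < m\}$. Denote by ${}^{\phi_m}S$ the ring $S$ viewed as an $S$-module through $\phi_m$.

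For a monomial ideal $I=(m_1,\ldots,m_r)$, a direct computation shows that
$$
S/I \otimes_S {}^{\phi_m}S \;\cong\; S/I^{[m]}, \qquad \text{where } I^{[m]} := (m_1^m,\ldots,m_r^m).
$$
This relies on the fact that the scalar action of $s \in S$ on ${}^{\phi_m}S$ is multiplication by $\phi_m(s)$, so $I \cdot {}^{\phi_m}S = \phi_m(I)\cdot S$, which for a monomial ideal coincides with $I^{[m]}$. Applying the exact functor $-\otimes_S {}^{\phi_m}S$ to a finite free resolution of $S/I$ then yields a free resolution of $S/I^{[m]}$ of the same length, so $\pd_S(S/I^{[m]}) \leq \pd_S(S/I)$. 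Combining this with the Auslander-Buchsbaum formula $\pd_S(S/I)=\dim S - \Depth(S/I)$ gives
$$
\Ext^i_S(S/I^{[m]},S) = 0 \quad \text{for every } i > \dim S - \Depth(S/I) \text{ and every } m \geq 1.
$$

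To conclude, I would express $H^i_I(S)$ as a direct limit of Ext modules along the system $\{I^{[m]}\}_{m\in\NN}$. Since $I^{[m]} \subseteq I^m$ and, by the pigeonhole principle, $I^{rm} \subseteq I^{[m]}$, this system is cofinal with $\{I^n\}_{n \in \NN}$, hence
$$
H^i_I(S) \;=\; \varinjlim_m \Ext^i_S(S/I^{[m]}, S) \;=\; 0 \quad \text{for all } i > \dim S - \Depth(S/I),
$$
which gives the desired inequality $\cd(I) \leq \dim S - \Depth(S/I)$. The main subtlety is the identification $S/I \otimes_S {}^{\phi_m}S \cong S/I^{[m]}$, and this is where the monomial hypothesis on $I$ is genuinely used: it ensures both that $\phi_m(I)S = I^{[m]}$ and that this ideal has the same radical as $I$, so that local cohomology can be computed along the cofinal system $\{I^{[m]}\}$. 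All other ingredients---flatness of $\phi_m$, Auslander-Buchsbaum, and the direct-limit presentation of local cohomology---are characteristic-free.
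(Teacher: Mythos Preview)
Your proof is correct and follows exactly the approach the paper indicates: the paper does not spell out a proof of this corollary but states that it is obtained by running the Peskine--Szpiro argument with the Frobenius-like $\KK$-algebra map $\phi:S\to S$, $x_i\mapsto x_i^m$, in place of Frobenius. Your write-up fills in the details (flatness of $\phi_m$, the identification $S/I\otimes_S{}^{\phi_m}S\cong S/I^{[m]}$ for monomial $I$, Auslander--Buchsbaum, and cofinality of $\{I^{[m]}\}$ with $\{I^n\}$) precisely along these lines.
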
 

From Theorem \ref{ThmVanishingPS}, Banerjee and the third author showed a relation between the projective dimension of a binomial edge ideal and the vertex connectivity, $\kappa(G)$ of the underlying graph.

\begin{theorem}[{\cite[Theorem B]{BNB}}]
Let $G$ be a simple connected graph on $[d]$, and let $S$ be $\KK[x_1,\ldots,x_d,y_1,\ldots,y_d]$.
If $G$ is not the complete graph, then 
$$
\Depth(S/J_G) \leq d+\kappa(G) -2
$$
\end{theorem}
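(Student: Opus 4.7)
My plan is to combine Peskine-Szpiro vanishing (Theorem~\ref{ThmVanishingPS}) with a Mayer-Vietoris analysis of the minimal primes of $J_G$. In prime characteristic, Peskine-Szpiro yields $\Depth(S/J_G) + \cd(J_G) \leq \dim S = 2d$, and reduction modulo $p$ extends the inequality to characteristic zero (since $J_G$ is defined over $\ZZ$). It therefore suffices to produce a lower bound on $\cd(J_G)$ matching the target depth bound.

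The combinatorial input is the Herzog-Hibi-Hreinsdottir-Kahle-Rauh description of the minimal primes of $J_G$: every admissible cut set $T \subseteq [d]$ gives rise to a minimal prime
\[
P_T \;=\; (x_i, y_i : i \in T) + J_{K_{V(G_1)}} + \cdots + J_{K_{V(G_{c(T)})}},
\]
where $G_1,\ldots,G_{c(T)}$ are the connected components of $G \setminus T$, of height $d + |T| - c(T)$ and with Cohen-Macaulay quotient. Since $G$ is connected, $P_\emptyset = J_{K_d}$ is a minimal prime of height $d-1$, and any minimum vertex cut $T$ with $|T| = \kappa(G)$ furnishes a second minimal prime $P_T$. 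I would then apply the Mayer-Vietoris long exact sequence to the pair $(P_\emptyset, P_T)$. A direct computation shows that the sum $P_\emptyset + P_T = (x_i, y_i : i \in T) + J_{K_{V \setminus T}}$ is again Cohen-Macaulay, of height $d + \kappa(G) - 1$. The Cohen-Macaulayness of all three ideals concentrates their local cohomologies in single cohomological degrees, and tracking through the exact sequence produces a non-vanishing class in $H^{\ast}_{P_\emptyset \cap P_T}(S)$ at the critical degree---arising either directly from $H^{\ast}_{P_T}(S)$ (when $c(T) = 2$) or through the connecting homomorphism originating at $H^{\ast}_{P_\emptyset + P_T}(S)$ (otherwise).

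The principal obstacle, in my view, is the case in which $J_G$ has minimal primes beyond $P_\emptyset$ and $P_T$: then $J_G \subsetneq P_\emptyset \cap P_T$, and the non-vanishing class produced above lives a priori only in $H^{\ast}_{P_\emptyset \cap P_T}(S)$ rather than in $H^{\ast}_{J_G}(S)$. To transfer the non-vanishing, I would localize at the generic point of $V(P_\emptyset + P_T)$: this subvariety lies inside $V(P_\emptyset) \cap V(P_T)$, and a combinatorial check on which vertex subsets give minimal primes allows one to arrange it to avoid every other minimal prime of $J_G$. After localization the class persists as a non-vanishing element of local cohomology with support in $J_G$, yielding the cohomological dimension bound and, through Peskine-Szpiro, the stated depth inequality. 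Verifying the locus-avoidance claim in full generality, as well as organizing the argument when several non-isomorphic minimum cuts have to be considered, is where I expect the real technical work to lie.
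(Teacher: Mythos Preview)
Your approach is correct and shares the overall architecture of the paper's proof: both reduce the depth bound to a lower bound on $\cd(J_G)$ via Peskine--Szpiro, and both extract that lower bound from the Herzog--Hibi--Hreinsd{\'o}ttir--Kahle--Rauh description of $\Min(J_G)$. The difference is in how the cohomological-dimension bound is obtained. The paper invokes the Brodmann--Sharp extension of Grothendieck's connectedness theorem \cite[Theorem~19.2.7]{BroSharp} as a black box: the existence of two minimal primes $P_\emptyset$ and $P_T$ whose sum has large height is exactly a failure of connectedness in the appropriate codimension, and the connectedness theorem converts this directly into $\cd(J_G)\geq d+\kappa(G)-2$. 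You are instead reproving the relevant instance of that theorem by hand, via Mayer--Vietoris and the Cohen--Macaulayness of $S/P_\emptyset$, $S/P_T$, and $S/(P_\emptyset+P_T)$. Your approach is more elementary and self-contained; the paper's is shorter.

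One reassurance: the localization step you flag as the ``real technical work'' is lighter than you expect. For a \emph{minimum} cut set $T$, if $P_{T'}\subseteq P_\emptyset+P_T=(x_i,y_i:i\in T)+J_{K_{[d]\setminus T}}$ then inspection of the linear part forces $T'\subseteq T$; but any proper nonempty subset $T'\subsetneq T$ leaves $G\setminus T'$ connected (since $|T'|<\kappa(G)$), so $T'$ fails the cut-point condition and is not admissible. Hence $P_\emptyset$ and $P_T$ are the \emph{only} minimal primes of $J_G$ contained in $\mathfrak{q}=P_\emptyset+P_T$, so $(J_G)_\mathfrak{q}=(P_\emptyset\cap P_T)_\mathfrak{q}$ and your Mayer--Vietoris nonvanishing localizes intact to give $H^{d+\kappa(G)-2}_{J_G}(S)_\mathfrak{q}\ne 0$. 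There is no need to juggle several minimum cuts; a single one suffices.
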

\begin{proof}[Proof sketch]
From the primary decomposition of $J_G$ \cite[Theorem 3.2]{HHHKR10}  and  Brodmann and Sharp's \cite[Theorem 19.2.7]{BroSharp} extension of  Grothendieck’s Connectedness Theorem \cite[Exposé XIII, Théorème 2.1]{Gro}, we have that 
$$
\cd(S/J_G)\geq d+\kappa(G) -2.
$$
Then, the result follows from Theorem \ref{ThmVanishingPS}.
\end{proof}

\subsection{Serre's conditions and $h$-vectors}

In this subsection we discuss a relation between Serre's condition $(S_k)$ and $h$-vectors for $F$-pure rings. We start with some preliminary definitions.

\begin{definition}
Let $k\in \NN.$
A ring $R$ satisfies Serre's condition $(S_k)$ if 
$$
\Depth(R_\p)\geq\min\{ \dim(R_{\p}), k\}. 
$$
\end{definition}

We note that a ring is Cohen-Macaulay if and only if it satisfies $(S_k)$ for $k=\dim(R)$ (or equivalently, for every $k\in\NN$). We recall that the condition $(S_2)$ is related to equidimensinality \cite[Remark 2.4.1]{HarCI}, normality \cite[Theorem 5.8.6]{EGA}, and connectedness \cite{HoHuOmega}.

\begin{definition}
Let $S=\KK[x_1,\ldots,x_d]$ be a standard graded polynomial ring, $I\subseteq S$ be a homogeneous ideal, and $R=S/I$.
Suppose that $r=\dim(R)$.
The $h$-vector of $R$ is defined as the vector $h(R)=(h_0(R),\ldots,h_s(R))\in\NN^s$ that satisfies
$$
\sum_{n\in \NN} \dim(R_n)t^n=\frac{h_0(R)+h_1(R) t+\ldots+h_s(R)t^{s}}{(1-t)^r}.
$$
\end{definition}

We note that if $R[\Delta]$ is a Stanley-Reisner ring then $h(\Delta)=h(R[\Delta])$ (see Definition \ref{DefHvectorSR}).
If $R$ is a Cohen-Macaulay ring, then the $h$-vector of $R$ is formed by non-negative integers. This can be show by going module a regular sequence of generic linear forms, after reducing to the case in which $\KK$ is infinite.

The following result relates Serre's conditions and $h$-vectors.

\begin{theorem}[{\cite[Theorem 1.1]{MTHvector}}]\label{ThmMuraiTerai}
Let $\Delta$ be a simplical complex on $[d]$, and $R[\Delta]$ its corresponding Stanley-Reisner rings. If $R[\Delta]$ satisfies Serre's condition $(S_m)$, then  $h_1(\Delta),\ldots, h_m(\Delta)\geq 0$.
\end{theorem}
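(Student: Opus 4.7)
The plan is to pass to characteristic $p>0$, exploit the $F$-purity of Stanley--Reisner rings, and use the Hochster--Roberts vanishing (Theorem \ref{ThmRegHR}) together with a partial Artinian reduction to force the low-degree coefficients of the $h$-vector to be non-negative.

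First, since $h(\Delta)$ is a combinatorial invariant of $\Delta$ independent of the base field, and the Serre condition $(S_m)$ for $\KK[\Delta]$ can be detected in positive characteristic (via a characteristic-$p$ version of Reisner's criterion for $(S_m)$, which expresses it in terms of the reduced homology of links of faces of $\Delta$ and is preserved for all sufficiently large $p$), we may assume $\KK$ is an infinite field of characteristic $p>0$. Fedder's Criterion (Theorem \ref{ThmFedderCriterion}) applied to the squarefree monomial ideal $I_\Delta$ shows that $R:=\KK[\Delta]$ is $F$-pure, so Theorem \ref{ThmRegHR} yields $a_i(R)\ls 0$ for all $i$. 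We may also assume $m\ls d:=\dim R$, since otherwise $R$ is Cohen--Macaulay and the result is classical. By $(S_m)$ we have $\Depth R\gs m$, so we can choose a linear l.s.o.p.\ $\theta_1,\ldots,\theta_d\in R_1$ with $\theta_1,\ldots,\theta_m$ a regular sequence on $R$, and set $B:=R/(\theta_1,\ldots,\theta_m)$; then $\dim B=d-m$ and the Hilbert series satisfies $H_B(t)=(1-t)^m H_R(t)=h(\Delta;t)/(1-t)^{d-m}$.

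Next, I would extract non-negativity of $h_0,\ldots,h_m$ by studying the Koszul complex $K_\bullet(\theta_{m+1},\ldots,\theta_d;B)$, whose Euler characteristic in each graded piece recovers the corresponding coefficient of $h(\Delta;t)$. The higher Koszul homologies account for the discrepancy between $h_i(\Delta)$ and $\dim_\KK [B/(\theta_{m+1},\ldots,\theta_d)B]_i$, and these discrepancies are controlled by the graded pieces of $H^j_\m(B)$. Iterating the long exact sequences in local cohomology associated to the short exact sequences $0\to R/(\theta_1,\ldots,\theta_{k-1})(-1)\to R/(\theta_1,\ldots,\theta_{k-1})\to R/(\theta_1,\ldots,\theta_k)\to 0$, and using the bound $a_i(R)\ls 0$ at every stage together with the fact that each $\theta_k$ has degree one, one forces these graded pieces to vanish in the range $i\ls m$. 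Consequently $h_i(\Delta)=\dim_\KK [B/(\theta_{m+1},\ldots,\theta_d)B]_i\gs 0$ for $0\ls i\ls m$.

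The main obstacle is the bookkeeping in the last step: one must track how the vanishing $a_i(R)\ls 0$ propagates through both the regular sequence $\theta_1,\ldots,\theta_m$ and the generally non-regular Koszul complex on $\theta_{m+1},\ldots,\theta_d$, and check that the $(S_m)$ hypothesis is strong enough to kill every correction term contributing to $h_i$ for $i\ls m$. The cleanest route is likely via a Schenzel-style identity expressing $h_i(\Delta)$ as an alternating sum involving $\dim_\KK B_j$ and graded pieces of $H^j_\m(R)$; the $F$-purity input enters precisely by annihilating the positive-degree parts of those local cohomology modules, leaving a manifestly non-negative expression in the desired range.
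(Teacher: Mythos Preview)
Your approach diverges substantially from the one the paper sketches, and the divergence contains a genuine gap.

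The paper's route (following Murai--Terai) is to first establish the technical reduction Theorem~\ref{ThmKeyLemma}: if $\reg\bigl(\Ext^{d-i}_S(R,\Omega_S)\bigr)\le i-m$ for $i=0,\ldots,\dim R -1$, then $h_1,\ldots,h_m\ge 0$. The regularity bounds on these $\Ext$ modules are then proved combinatorially, via Yanagawa's theory of squarefree modules and Hochster's formula. No reduction to characteristic $p$ is used, and the Hochster--Roberts vanishing $a_i(R)\le 0$ does not appear. The $F$-pure generalization (Theorem~\ref{ThmDMV}) also routes through Theorem~\ref{ThmKeyLemma}, but obtains the regularity bounds from Frobenius actions \emph{on the modules $\Ext^{d-i}_S(R,\Omega_S)$ themselves}, combined with the dimension inequality $\dim\Ext^{d-i}_S(R,\Omega_S)\le i-m$ that $(S_m)$ provides.

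Your proposal uses only the weaker input $a_i(R)\le 0$, and this is where it breaks down. The bound $a_i(R)\le 0$ does not propagate through the partial linear reduction the way you claim: from the long exact sequence attached to $0\to R/(\theta_1,\ldots,\theta_{k-1})(-1)\to R/(\theta_1,\ldots,\theta_{k-1})\to R/(\theta_1,\ldots,\theta_k)\to 0$ one only gets $a_i\bigl(R/(\theta_1,\ldots,\theta_k)\bigr)\le k$, so after $m$ steps your ring $B$ satisfies merely $a_i(B)\le m$. The Koszul homologies on $\theta_{m+1},\ldots,\theta_d$ over $B$ can therefore be nonzero precisely in the degree range $1,\ldots,m$ you care about, and the Schenzel-type correction terms involve $H^i_\m(R)$ in \emph{non-positive} degrees, which $a_i(R)\le 0$ does not control at all. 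So the ``bookkeeping'' you flag as the main obstacle is not just bookkeeping: the information you have extracted from $F$-purity is genuinely too coarse. What is missing is exactly the regularity bound on the $\Ext$ modules --- a finer invariant than $a_i(R)$ --- and obtaining it requires either the combinatorics of squarefree modules (Murai--Terai) or the Frobenius action on $\Ext^{d-i}_S(R,\Omega_S)$ (Dao--Ma--Varbaro), neither of which your outline invokes.
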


Murai and Terai proved the following technical result, which plays a important role in the proof of Theorem \ref{ThmMuraiTerai}.

\begin{theorem}[{\cite[Theorem 1.4]{MTHvector}}]\label{ThmKeyLemma}
Let $S=\KK[x_1,\ldots, x_d]$ be a standard graded polynomial ring, $I\subseteq S$ be a homogeneous ideal, and $R=S/I$. Let $\Omega_S$ denote the graded canonical module of $S$.
If $\reg(\Ext^{r-i}_S(R,\Omega_S))\leq i-m$ for every $i=0,\ldots, d-1$, then
 $h_1(R),\ldots, h_m(R)\geq 0$.
\end{theorem}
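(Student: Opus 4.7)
The plan is to express the $h$-polynomial coefficients of $R$ in terms of the Hilbert series of the modules $K^j := \Ext^j_S(R, \Omega_S)$ via a Hilbert-series identity coming from graded local duality, and then use the regularity bounds to show that only the canonical-module contribution influences $h_1(R), \ldots, h_m(R)$.

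First I would set up the Hilbert-series identity. By graded local duality, each $K^j$ is, up to a degree shift, the graded Matlis dual of $H^{d-j}_\m(R)$, where $d = \dim S$. Substituting this into the Grothendieck--Serre identity that relates $H_R(t)$ to an alternating sum of Hilbert series of local cohomology modules, and multiplying through by $(1-t)^r$, yields an expression of $h_R(t)$ as a combination (with signs, and factors of $(1-t)^{\bullet}$) of the Hilbert series of the $K^j$. The summand coming from the canonical module $\omega_R = K^{d-r}$ is the ``main term''; the other summands are corrections reflecting the failure of the Cohen--Macaulay property.

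Next I would exploit the regularity hypothesis. For each index $i = 0, \ldots, d-1$, Proposition \ref{PropDegGenReg} applied to $\Ext^{r-i}_S(R, \Omega_S)$ as a standard-graded $S$-module bounds $\alpha_S(\Ext^{r-i}_S(R, \Omega_S)) \leq \reg(\Ext^{r-i}_S(R, \Omega_S)) \leq i - m$. Translating this through the identity from the first step, each non-canonical summand contributes only to coefficients of $t^k$ in $h_R(t)$ with $k > m$. Hence $h_k(R)$ equals the canonical contribution for $k = 1, \ldots, m$. An Artinian reduction on $\omega_R$ --- after base change to an infinite field and modding out by a generic regular sequence of $r$ linear forms on $\omega_R$ --- produces an Artinian module whose Hilbert coefficients coincide with the relevant pieces of the canonical contribution, so they are non-negative as $\KK$-vector space dimensions. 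Combining, $h_1(R), \ldots, h_m(R) \geq 0$.

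The main obstacle will be the precise bookkeeping in the Hilbert-series identity: tracking all signs and degree shifts (in particular the $-d$ shift coming from $\Omega_S = S(-d)$) through graded local duality, and verifying that the cutoff $\reg(\Ext^{r-i}_S(R, \Omega_S)) \leq i - m$ is exactly sharp for the vanishing of the non-canonical corrections in the desired degree range. A possible simplification is to proceed by induction on $m$ or on $\dim R$ using short exact sequences that decrement the Ext index, though the fine control on $\reg(K^j)$ for each $j$ would still be required.
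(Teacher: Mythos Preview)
The paper does not actually supply its own proof of this statement: Theorem~\ref{ThmKeyLemma} is only quoted from Murai--Terai \cite{MTHvector}, and the sentence that follows it in the paper refers to how the \emph{hypothesis} of the theorem is verified in the Stanley--Reisner setting (via squarefree modules and Hochster's formula), not to a proof of the theorem itself. So there is no argument in the paper to compare your sketch against directly.

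That said, your overall strategy --- express $h_R(t)$ through a Grothendieck--Serre/local-duality identity as a canonical-module term plus correction terms coming from the lower $\Ext$ modules, and then use the regularity hypothesis to kill the corrections in degrees $\leq m$ --- is precisely the shape of Murai--Terai's original proof. The invocation of Proposition~\ref{PropDegGenReg} to pass from regularity to degree bounds is also appropriate in the standard-graded case.

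There is, however, a genuine gap in your final step. You write that non-negativity of the canonical contribution comes from ``modding out by a generic regular sequence of $r$ linear forms on $\omega_R$''. But $\omega_R = \Ext^{d-r}_S(R,\Omega_S)$ is not Cohen--Macaulay in general, so a length-$r$ regular sequence of linear forms on $\omega_R$ need not exist; and without it, the Hilbert function of $\omega_R/\Theta\omega_R$ is not the polynomial $(1-t)^r H_{\omega_R}(t)$ you want, but differs from it by further correction terms. Murai--Terai avoid this by performing the Artinian reduction on $R$ rather than on $\omega_R$: for a generic linear system of parameters $\Theta$ they use a Schenzel-type formula expressing $\dim_\KK(R/\Theta R)_j$ as $h_j(R)$ plus terms indexed by the deficiency modules $\Ext^{d-i}_S(R,\Omega_S)$ for $i<r$. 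The regularity hypothesis forces those terms to vanish for $j\leq m$, whence $h_j(R)=\dim_\KK(R/\Theta R)_j\geq 0$. Recasting your last paragraph along these lines would close the gap; the ``bookkeeping'' you anticipate is exactly the verification that the degree shifts in that formula match the bound $i-m$.
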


The bound on the regularity of $\Ext^{r-i}_S(R,\Omega_S)$, as in Theorem \ref{ThmKeyLemma}, was proven using squarefree modules \cite{YSq}, and Hochster's Formula \cite{HochsterFormula}.
Since Stanley-Reisner rings are good representative of the class of $F$-pure rings, it is natural to ask whether Theorem \ref{ThmMuraiTerai} holds for the class.
This was recently showed by Dao, Ma, and Varbaro.

\begin{theorem}[{\cite[Theorem 1.2]{DMV}}]\label{ThmDMV}
Let $S=\KK[x_1,\ldots, x_d]$ be a standard graded polynomial ring, and  $I\subseteq S$ be a homogeneous ideal such that $R=S/I$ is $F$-finite and $F$-pure. If $R$ satisfies Serre's condition $(S_m)$, then  $h_1(\Delta),\ldots, h_m(\Delta)\geq 0$.
\end{theorem}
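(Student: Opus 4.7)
The plan is to reduce the theorem to verifying the hypothesis of Theorem \ref{ThmKeyLemma}, namely that $\reg(\Ext^{d-i}_S(R,\Omega_S)) \leq i - m$ for every $i = 0, \ldots, d-1$. Set $E_i := \Ext^{d-i}_S(R,\Omega_S)$. By graded local duality, $E_i$ is isomorphic to the graded Matlis dual $H^i_\m(R)^\vee$, so degree and support information on $E_i$ translates into statements about $H^i_\m(R)$.

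The two main inputs are: (a) by Theorem \ref{ThmRegHR}, the $F$-purity of $R$ gives $a_i(R) \leq 0$ for every $i$, so each $H^i_\m(R)$ lives in non-positive degrees and dually each $E_i$ sits in non-negative degrees; (b) by a standard consequence of local duality, Serre's condition $(S_m)$ is equivalent to the dimension estimates $\dim_S E_i \leq i - m$ for all $i < \dim R$, together with the depth inequality $\depth R \geq \min(m,\dim R)$.

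I would then split into three cases. For $i < m$, the depth inequality from (b) forces $H^i_\m(R) = 0$, hence $E_i = 0$ and the bound is vacuous. For $i = \dim R$, the module $E_i$ is the canonical module $\omega_R$, and the inequality $\reg(\omega_R) \leq \dim R - m$ follows from $(S_m)$ via a standard computation relating $\reg(\omega_R)$ to $\depth R$. The main case is the intermediate range $m \leq i < \dim R$, where neither (a) nor (b) alone suffices. The plan here is to transport the $F$-pure structure from $R$ to the deficiency module $E_i$: applying $\Ext^{d-i}_S(-,\Omega_S)$ to the split injection $R \hookrightarrow R^{1/p^e}$, and using the natural identification $\Ext^{d-i}_S(R^{1/p^e},\Omega_S) \cong E_i^{1/p^e}$, yields a split $S$-linear inclusion $E_i \hookrightarrow E_i^{1/p^e}$ compatible with the $\tfrac{1}{p^e}\ZZ$-grading. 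Taking $H^j_\m(-)$ exhibits $H^j_\m(E_i)$ as a graded direct summand of $H^j_\m(E_i)^{1/p^e}$, and the Artinian scaling argument used in the proof of Theorem \ref{ThmRegHR} then forces $a_j(E_i) \leq 0$ for every $j$. Combined with $\dim E_i \leq i - m$ from (b), this gives
\[
\reg(E_i) = \max_j\{a_j(E_i) + j\} \leq \dim E_i \leq i - m,
\]
completing the verification.

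The main obstacle I expect is this intermediate case: transferring the $F$-pure structure from $R$ to the deficiency modules $E_i$ through the Matlis duality isomorphism $E_i \cong H^i_\m(R)^\vee$ requires careful bookkeeping of the duality conventions, the shift induced by $\Omega_S = S(-d)$, and the compatibility between the $\tfrac{1}{p^e}\ZZ$-grading on $E_i^{1/p^e}$ and the original grading on $E_i$. Once this identification is set up, the degree-scaling argument of Theorem \ref{ThmRegHR} applies almost verbatim, but checking that no residual Frobenius twist obstructs the bound $a_j(E_i) \leq 0$ is the technical heart of the argument.
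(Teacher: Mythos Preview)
Your proposal is correct and follows essentially the same approach as the paper. The paper does not give a detailed proof of Theorem~\ref{ThmDMV}; it only records that the argument ``uses Frobenius actions on $\Ext^{r-i}_S(R,\Omega_S)$ with ideas in the spirit of Theorem~\ref{ThmRegHR}, and Theorem~\ref{ThmKeyLemma}'' --- which is precisely your plan: verify the regularity hypothesis of the Murai--Terai lemma by transporting the $F$-splitting to the deficiency modules $E_i$, applying the Hochster--Roberts degree-scaling argument to get $a_j(E_i)\leq 0$, and combining with the dimension bound $\dim E_i\leq i-m$ coming from $(S_m)$.

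One minor remark: your separate treatment of the case $i=\dim R$ is unnecessary. The hypothesis of Theorem~\ref{ThmKeyLemma} (as in Murai--Terai's original formulation) only concerns the genuine deficiency modules, i.e., the range $i<\dim R$; the canonical module does not enter. So your ``main case'' $m\leq i<\dim R$ together with the vanishing for $i<m$ already covers everything, and the somewhat vague claim about $\reg(\omega_R)$ can be dropped.
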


The proof of Theorem \ref{ThmDMV} uses  Frobenius actions on 
$\Ext^{r-i}_S(R,\Omega_S)$ with ideas in the spirit of Theorem \ref{ThmRegHR}, and Theorem \ref{ThmKeyLemma}.

\section{Symbolic powers}\label{SecSymb}
Symbolic powers have been the  subject of intense research. 
We refer the interested reader to a recent survey on this subject \cite{SurveySymbPowers}.

\begin{definition}
Let $R$ be a Noetherian domain.
Given a radical ideal $I\subseteq R$, its {\it $n$-th symbolic power} is defined by 
$$
I^{(n)}=\bigcap_{\p\in \Min_R(R/I)} (I^n R_\p\cap R).
$$
\end{definition}

For many purposes, one can focus on symbolic powers of  prime ideals. 
In fact, if $I=\p_1\cap \ldots\cap \p_k$ is the primary decomposition of $I$, we have that
 $I^{(n)}=\p_1^{(n)}\cap\ldots\cap  \p_k^{(n)}$.
We note that if $\p$ is a prime ideal, then $\p^{(n)}$ is the $\p$-primary component of $\p^n$.

We now recall the characterization of of symbolic powers with differential operators.

\begin{theorem}[Zariski-Nagata Theorem {\cite{Zariski} \& \cite{Nagata}}]
If $R$ is a polynomial ring over a perfect field, and $I\subseteq R$ is radical ideal.    
Then,
$$
I^{(n)}=
\left\{ f\in R \ \bigg| \ \left(\frac{1}{\alpha_1 !\cdots \alpha_d !}\frac{\partial}{\partial x^{\alpha_1}_1}\cdots \frac{\partial}{\partial x^{\alpha_d}_d}\right)(f) \in I \quad 
\forall \; \alpha_1+\ldots \alpha_d\leq n-1 \right\} 
$$   
\end{theorem}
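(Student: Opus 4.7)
The plan is to prove both containments simultaneously by reducing the statement to the case of a maximal ideal corresponding to a $\bar{\KK}$-point of $\mathbb{A}^d$, where everything becomes a calculation with Taylor expansions.

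First I would reduce to the case of prime ideals: writing the radical $I=\p_1\cap\cdots\cap\p_k$ as an irredundant intersection of its minimal primes, we have $I^{(n)}=\p_1^{(n)}\cap\cdots\cap\p_k^{(n)}$ directly from the definition, while the condition that every $D^\alpha(f):=\tfrac{1}{\alpha!}\partial^{|\alpha|}f/\partial x^\alpha$ of order at most $n-1$ belongs to $I$ is equivalent to the same condition with $I$ replaced by each $\p_j$. So it suffices to prove the statement for a single prime ideal $\p$.

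Next I would reduce from a prime to its associated maximal ideals. The classical Zariski-Nagata theorem gives
\[
\p^{(n)} \;=\; \bigcap_{\substack{\m\supseteq \p \\ \m\text{ maximal}}} \m^n,
\]
so it is enough to prove: for every maximal ideal $\m$ of $R=\KK[x_1,\ldots,x_d]$ and every $f\in R$, one has $f\in \m^n$ if and only if $D^\alpha(f)\in\m$ for all $|\alpha|\leq n-1$. For this, note that the differential operators $D^\alpha$ commute with any faithfully flat base change $R\hookrightarrow R':=R\otimes_\KK\ov{\KK}$, and because $\KK$ is perfect the extension $\m R'$ is radical, hence equal to the intersection $\bigcap \n$ of the finitely many maximal ideals $\n$ of $R'$ lying over $\m$. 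By faithful flatness, $\m^n = R\cap (\m R')^n$ and $\m = R \cap \n$ for each such $\n$, so the statement for $\m$ follows from the corresponding statement for each $\bar{\KK}$-point $\n\subseteq R'$.

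It then remains to prove the equivalence for a maximal ideal $\n\subseteq \ov{\KK}[x_1,\ldots,x_d]$ of the form $\n=(x_1-a_1,\ldots,x_d-a_d)$. After translating $x_i\mapsto x_i+a_i$, which permutes the $D^\alpha$ among themselves (by the product rule for divided powers) and sends $\n$ to $(x_1,\ldots,x_d)$, the claim becomes: a polynomial $f$ has no monomials of degree less than $n$ if and only if its divided partials $D^\alpha(f)$ of order $|\alpha|\le n-1$ all have vanishing constant term, i.e.\ lie in $(x_1,\ldots,x_d)$. This is exactly the content of Taylor's formula, since $D^\alpha(f)(0)$ is the coefficient of $x^\alpha$ in $f$. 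The main obstacle in the whole argument is justifying that $\m R'$ is a radical ideal; this is precisely where perfectness of $\KK$ is used, as it guarantees that the residue field $R/\m$ is a separable extension of $\KK$, so that $R/\m\otimes_\KK\ov{\KK}$ is reduced.
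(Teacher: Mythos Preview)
The paper does not actually prove this theorem; it only states it with citations to Zariski and Nagata, so there is no proof in the paper to compare against.

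Your argument is a correct and standard route. A few remarks. The identity you invoke as ``the classical Zariski--Nagata theorem'', namely $\p^{(n)}=\bigcap_{\m\supseteq\p}\m^n$, is precisely the Eisenbud--Hochster result stated immediately after in the paper; using it as input here is legitimate and not circular, since it is a separate statement, but you should flag that dependence explicitly. In the base-change step you implicitly use that the finitely many maximal ideals $\n$ over $\m$ are pairwise comaximal, so that $(\m R')^n=(\bigcap\n)^n=\bigcap\n^n$; this is true but deserves a word. Finally, translation does not ``permute the $D^\alpha$ among themselves'' --- it commutes with each of them, which is what you actually need.
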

  
 One can interpret the $n$-th symbolic powers as the function whose vanishing order along $\mathbb{V}(I)$ is 
 $n$, as the following theorem makes precise.
 
 \begin{theorem}[{\cite[Theorem]{EH}}]
 Let $I\subseteq S=\KK[x_1,\ldots,x_d]$ be a radical ideal.
 Then,
 $$
 I^{(n)}=\bigcap_{\m\in\MaxSpec{R}, \;\; I\subseteq \m} \m^n
 $$
\end{theorem}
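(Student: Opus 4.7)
The plan is to deduce this from the Zariski-Nagata theorem stated immediately above, together with the fact that polynomial rings over a field are Jacobson (Hilbert's Nullstellensatz). The key observation is that both $I$ and every maximal ideal $\m\supseteq I$ are radical, so the Zariski-Nagata characterization applies to each of them. This lets us translate membership in $I^{(n)}$ and in $\m^n$ into the \emph{same} condition on the partial derivatives $\frac{1}{\alpha!}\partial^\alpha f$ for $|\alpha|\leq n-1$, and the two sides of the claimed identity differ only by whether those derivatives are required to lie in $I$ or in every $\m\supseteq I$.

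For the containment $I^{(n)}\subseteq\bigcap_{\m\supseteq I}\m^n$, I would fix $f\in I^{(n)}$ and a maximal ideal $\m$ containing $I$. By Zariski-Nagata applied to $I$, every iterated partial derivative $\frac{1}{\alpha!}\partial^\alpha f$ with $|\alpha|\leq n-1$ lies in $I$, and hence in $\m$. Since $\m$ is maximal (in particular radical), Zariski-Nagata applied to $\m$ then yields $f\in\m^n$.

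For the reverse containment, suppose $f\in\m^n$ for every maximal ideal $\m$ containing $I$. Applying the easy direction of Zariski-Nagata to each such $\m$, each derivative $\frac{1}{\alpha!}\partial^\alpha f$ with $|\alpha|\leq n-1$ belongs to $\bigcap_{\m\supseteq I}\m$. Since $S=\KK[x_1,\ldots,x_d]$ is a Jacobson ring, Hilbert's Nullstellensatz gives $\sqrt{I}=\bigcap_{\m\supseteq I}\m$, and as $I$ is radical this common intersection is simply $I$. Thus every derivative of order at most $n-1$ lies in $I$, and applying Zariski-Nagata to $I$ once more concludes that $f\in I^{(n)}$.

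The main (and essentially only) obstacle is that the Zariski-Nagata theorem as stated requires $\KK$ to be perfect; so the argument above is written under that same hypothesis, and in the non-perfect case one would have to base change to a perfect extension and descend, using that both symbolic powers and intersections of maximal ideals are compatible with faithfully flat extensions.
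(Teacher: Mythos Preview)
The paper does not give its own proof of this theorem; it is simply quoted from Eisenbud--Hochster \cite{EH} as background for the section on symbolic powers. So there is nothing to compare your argument against in the paper itself.

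Your argument is correct over a perfect field and is, in fact, essentially the standard way one deduces the Eisenbud--Hochster statement from the differential-operator form of Zariski--Nagata: both directions reduce to the observation that, for a radical ideal $J$, membership in $J^{(n)}$ is equivalent to all divided-power partial derivatives of order $\leq n-1$ landing in $J$, and then one uses the Jacobson property of $S$ to pass between $I$ and the intersection $\bigcap_{\m\supseteq I}\m$. The identification $\m^{(n)}=\m^n$ for maximal $\m$ is automatic, as you implicitly use.

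One caution on your closing remark: the descent from a perfect extension is not entirely free. Symbolic powers do commute with faithfully flat base change in this setting, but the right-hand side $\bigcap_{\m\supseteq I}\m^n$ does \emph{not} obviously commute with field extension, since the set of maximal ideals changes. The cleanest fix over a non-perfect field is not descent but rather to replace the Hasse--Schmidt derivations by the full ring of $\KK$-linear differential operators on $S$ (which exist and behave well without the perfectness hypothesis); the differential version of Zariski--Nagata then holds as stated, and your argument goes through verbatim. Alternatively, one can appeal directly to the original Eisenbud--Hochster argument, which is phrased in terms of regular local rings and does not pass through differential operators at all.
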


From the definition it follows that $I^n\subseteq I^{(n)}$ for every $n$.
In fact, they are equal for radical ideals generated by a regular sequence.
This is true in particular for ideals generated by variables in polynomial rings.
In general, symbolic powers do not coincide with the ordinary powers. For instance, if $I=(xy,xz,yz)$, then $xyz\in I^{(2)}\setminus I^2$. 
However,  it is possible to find a uniform constant, $c$, such that  $I^{(cn)}\subseteq I^n$ for smooth varieties over $\mathbb{C}$, as the following theorem makes explicit.

\begin{theorem}[{\cite[Theorem A]{ELS}}] \label{Thm ELS}
Let $\p$ be a prime ideal of codimension $h$ in the coordinate ring of a smooth algebraic variety over $\mathbb{C}$. Then, $\p^{(hn)} \subseteq \p^n$ for all $n \gs 1$, where 
$h=\HT(\p).$ 
\end{theorem}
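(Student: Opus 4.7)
The plan is to reduce to positive characteristic and apply the tight closure argument of Hochster and Huneke. By standard spreading-out, it suffices to prove the containment $\p^{(hn)} \subseteq \p^n$ for a prime $\p$ of height $h$ in a regular ring $R$ of prime characteristic $p > 0$.

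The local heart of the argument is a pigeonhole estimate. Localizing at $\p$, the maximal ideal $\p R_\p$ is generated by a regular system of parameters $y_1,\ldots,y_h$. For any $n \geq 1$ and $q = p^e$, any monomial of total degree $hnq$ in these $h$ variables must have some $y_i$ appearing to exponent at least $nq$; writing $y_i^{nq} = (y_i^n)^q \in ((y_1,\ldots,y_h)^n)^{[q]}$, one obtains $(y_1,\ldots,y_h)^{hnq} \subseteq ((y_1,\ldots,y_h)^n)^{[q]}$, and therefore $\p^{hnq} R_\p \subseteq (\p^n)^{[q]} R_\p$. Consequently, for any $f \in \p^{(hn)}$ one has $f^q \in \p^{(hnq)} \subseteq (\p^n)^{[q]} R_\p \cap R$ for every $q = p^e$.

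To conclude that $f \in \p^n$, the strategy is to produce a single nonzero element $c \in R$, independent of $q$, such that $cf^q \in (\p^n)^{[q]}$ holds globally in $R$ for all $q$; this would exhibit $f$ as an element of the tight closure $(\p^n)^*$, which equals $\p^n$ since $R$ is regular. The main obstacle is precisely the construction of this uniform $c$: the Frobenius powers $(\p^n)^{[q]}$ inherit the embedded primes of $\p^n$, so clearing the local-to-global passage from $(\p^n)^{[q]} R_\p \cap R$ into $(\p^n)^{[q]}$ requires an element that operates simultaneously for all $q$. This uniformity is furnished by the existence of test elements for tight closure in regular rings of positive characteristic, which is the technical crux of the Hochster-Huneke approach.
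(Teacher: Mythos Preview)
The paper does not prove this theorem; it is stated with a citation to \cite{ELS}, and the subsequent Hochster--Huneke version (Theorem~\ref{USP-Poly-CharP}) is likewise only stated, with the pigeonhole containment $I^{(hp^e)}\subseteq I^{[p^e]}$ of Equation~\eqref{EqContP} singled out as the key ingredient.

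Your sketch follows the Hochster--Huneke route rather than the multiplier-ideal proof of Ein--Lazarsfeld--Smith, which is reasonable, but the argument has a genuine gap in the final step. The pigeonhole inclusion $\p^{hnq}R_\p\subseteq(\p^n)^{[q]}R_\p$ is correct, but it is too coarse: since Frobenius is flat on a regular ring, primary decomposition commutes with $(-)^{[q]}$, and one computes
\[
(\p^n)^{[q]}R_\p\cap R=(\p^{(n)})^{[q]}.
\]
Thus from $f^q\in(\p^n)^{[q]}R_\p\cap R$ you only recover $f^q\in(\p^{(n)})^{[q]}$, hence $f\in\p^{(n)}$, which is trivial. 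Worse, the uniform $c$ you ask for would have to lie in
\[
\bigcap_{q}\big((\p^n)^{[q]}:(\p^{(n)})^{[q]}\big)=\bigcap_{q}(\p^n:\p^{(n)})^{[q]},
\]
and this intersection is zero whenever $\p^n\neq\p^{(n)}$. Invoking ``test elements'' does not help: in a regular ring every nonzero element is already a test element, and test elements certify $cf^q\in J^{[q]}$ only for $f$ \emph{already known} to lie in $J^*$; they do not produce the local-to-global passage you need.

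The actual Hochster--Huneke argument applies the pigeonhole only in the case $n=1$, giving the \emph{global} containment $\p^{(hq)}\subseteq\p^{[q]}$ (both sides are $\p$-primary, so the local check suffices). For general $n$ one then uses $(\p^n)^{[q]}=(\p^{[q]})^n\supseteq(\p^{(hq)})^n$ together with a separate estimate, essentially \cite[Theorem~2.6]{HHpowers}, that produces a fixed nonzero $c$ with $c\,\p^{(hnq)}\subseteq(\p^{(hq)})^n$ for all $q$. That step---not an appeal to test elements---is where the real work lies; you can see a variant of it carried out explicitly in the proof of Theorem~\ref{ThmGH} in this paper.
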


Hochster and Huneke extended the previous result to regular rings containing a field using tight closure arguments. Another slightly different proof was done in a recent survey \cite[Theorem 2.20]{SurveySymbPowers}. We recall that the bigheight of a radical ideal is the largest height of its minimal primes.

\begin{theorem}[{\cite[Theorem 1.1]{HHpowers}}]\label{USP-Poly-CharP}
Let $R$ be a regular ring of prime characteristic $p$.
If  $I$ is a radical ideal of bigheight $h$, then $I^{(hn)} \subseteq I^n$ for all $n \gs 1$. 
\end{theorem}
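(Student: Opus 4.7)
My plan is to use tight closure methods in the regular ring $R$, relying on the crucial fact that tight closure is trivial in regular rings: $(I^n)^* = I^n$. The strategy has two stages: first, establish the Frobenius-adapted containment $I^{(hq)} \subseteq I^{[q]}$ for every $q = p^e$; then bootstrap to arbitrary $n$ by running the same local computation with the ideal $I^n$.

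For the key containment, I work locally at each minimal prime $\p_i$ of $I$, of height $h_i \leq h$. The ring $R_{\p_i}$ is regular local of dimension $h_i$, and its maximal ideal $\p_i R_{\p_i}$ is minimally generated by $h_i$ elements $x_1,\dots,x_{h_i}$. Any monomial $x_1^{a_1}\cdots x_{h_i}^{a_{h_i}}$ with $\sum_j a_j \geq hq$ must, by pigeonhole (since $hq \geq h_i q$), satisfy $a_j \geq q$ for some $j$, and hence lies in $(x_1^q,\dots,x_{h_i}^q) = \p_i^{[q]} R_{\p_i}$; therefore $\p_i^{hq} R_{\p_i} \subseteq \p_i^{[q]} R_{\p_i}$. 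To globalize, I use that Frobenius is flat in the regular ring $R$: this implies $I^{[q]} = \bigcap_i \p_i^{[q]}$ (finite intersections commute with Frobenius powers) and also that each $\p_i^{[q]}$ is $\p_i$-primary (since $\Ass(R/\p_i^{[q]}) = \Ass(R/\p_i) = \{\p_i\}$), so $\p_i^{[q]} = \p_i^{[q]} R_{\p_i} \cap R$. Intersecting the local containments yields
\[
I^{(hq)} = \bigcap_i \big(\p_i^{hq} R_{\p_i} \cap R\big) \subseteq \bigcap_i \p_i^{[q]} = I^{[q]}.
\]

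For general $n$, given $f \in I^{(hn)}$ I note that $f^q \in (I^{(hn)})^{[q]} \subseteq I^{(hnq)}$. The aim is to place $f^q$ in $(I^n)^{[q]} = (I^{[q]})^n$, possibly after multiplication by a fixed test element. Running the pigeonhole argument in $R_{\p_i}$ for the ideal $J = I^n$ (which has the same minimal primes and bigheight as $I$), any monomial of degree $\geq nhq$ in the $h_i$ variables $x_1,\dots,x_{h_i}$ satisfies $\sum_j \lfloor c_j/q\rfloor \geq nh - h_i + 1 \geq n$, because $n(h-1) \geq h_i - 1$ whenever $n\geq 1$ and $h_i \leq h$; such a monomial is thus a product of $n$ generators of $\p_i^{[q]} R_{\p_i}$, placing it in $\big((\p_i R_{\p_i})^{[q]}\big)^n = (J R_{\p_i})^{[q]}$. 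Hence $f^q$ lies locally in $J^{[q]} R_{\p_i}$ at every minimal prime of $J$.

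The main obstacle is that $J = I^n$ is not radical, so $J^{[q]}$ may acquire embedded associated primes and the clean global-to-local reassembly used in the first stage breaks down. This is precisely where the tight-closure framework pays off: since Frobenius is flat, $\Ass(J^{[q]}) = \Ass(J)$ is a fixed finite set of primes, independent of $q$, so one can select once and for all a test element $c \in R^\circ$ that absorbs the embedded-component discrepancy uniformly in $q$, yielding $c f^q \in (I^n)^{[q]}$ for every $q \gg 0$. The triviality of tight closure in the regular ring $R$ then gives $f \in (I^n)^* = I^n$, completing the proof.
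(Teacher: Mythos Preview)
The paper does not prove this theorem in full; it cites Hochster--Huneke and records only the key containment $I^{(hp^e)} \subseteq I^{[p^e]}$, obtained locally via the Pigeonhole Principle, noting that this settles the special case $n=p^e$. Your first stage reproduces exactly this step and is correct.

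Your second stage, however, has a real gap. The local pigeonhole at each minimal prime $\p_i$ gives $f^q \in (\p_i^{[q]})^n R_{\p_i}$; contracting and intersecting (using flatness of Frobenius to identify $(\p_i^{[q]})^n R_{\p_i} \cap R = (\p_i^{(n)})^{[q]}$) yields only
\[
f^q \in \bigcap_i (\p_i^{(n)})^{[q]} = (I^{(n)})^{[q]}.
\]
From this, the stability of $\Ass\big(R/(I^n)^{[q]}\big)$ does \emph{not} produce a fixed $c$ with $cf^q \in (I^n)^{[q]}$: for any $c \in (I^n : I^{(n)})$ one obtains $c^q f^q \in (I^n)^{[q]}$, hence $(cf)^q \in (I^n)^{[q]}$ and thus only $cf \in (I^n)^* = I^n$, not $f \in I^n$. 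Worse, a fixed $c \in R^\circ$ satisfying $c \cdot (I^{(n)})^{[q]} \subseteq (I^n)^{[q]}$ for all $q$ would lie in $\bigcap_q (I^n:I^{(n)})^{[q]} = 0$ whenever $I^{(n)} \ne I^n$. The difficulty is that your computation at the minimal primes retains no more information than $f \in I^{(n)}$, discarding precisely the extra strength of $f \in I^{(hn)}$ that the tight closure argument must exploit at the embedded primes.

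In the actual Hochster--Huneke proof (their Theorem~2.6, which this paper invokes in the proof of Theorem~\ref{ThmGH}), the passage to general $n$ is handled by a more delicate containment among symbolic powers, not by a bare ``choose $c$ once the associated primes are fixed''. Your outline is on the right track in spirit, but the final paragraph hides the essential difficulty rather than resolving it.
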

 
A key element in the proof of the previous result is that for all $e\in \NN$
\begin{equation}\label{EqContP}
I^{(hp^e)}\subseteq I^{[p^e]}\subseteq I^{p^e},
\end{equation}
 where the first containment follows from the Pigeonhole Principle. 
This is because Equation \ref{EqContP} can be verified locally at every minimal prime of $I$, and if $\alpha_1+\ldots \alpha_h\geq hp^e$ then there exists $i$ such that $\alpha_i\geq p^e$. 
Then, in prime characteristic $p$, the uniform containment for the particular case that $n=p^e$ follows from this principle. 
In fact, Equation \ref{EqContP} can be refined  further to obtain that
\begin{equation}\label{EqContP2}
I^{(h(p^e-1)+1)}\subseteq I^{[p^e]}\subseteq I^{p^e}
\end{equation}
for every $e\in\NN$.
This motivated the following conjecture.

\begin{conjecture}[Harbourne]\label{ConjHarbourne}
Let $R=\KK[x_1,\ldots,x_d]$ and $I\subseteq R$ be a radical homogeneous ideal of bigheight $h$.
Then
$$
I^{(h(n-1)+1)}\subseteq I^{n}.
$$
\end{conjecture}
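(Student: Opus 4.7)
The plan is to attempt the conjecture via the Frobenius methods used in Theorem \ref{USP-Poly-CharP}, starting with a reduction to prime characteristic through the techniques of \cite{HHCharZero}, and then leveraging the refined containment (\ref{EqContP2}):
$$
I^{(h(p^e-1)+1)} \subseteq I^{[p^e]} \subseteq I^{p^e}.
$$
This already settles the conjecture whenever $n$ itself is a power of $p$; the remaining task is to interpolate to an arbitrary exponent $n$.

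For a general $n$, my first move would be to raise an element $f \in I^{(h(n-1)+1)}$ to the $p^e$-th power, obtaining $f^{p^e} \in I^{(p^e(h(n-1)+1))}$. If one could prove a containment of the form
$$
I^{(p^e(h(n-1)+1))} \subseteq (I^n)^{[p^e]} = (I^{[p^e]})^n,
$$
then flatness of Frobenius on the regular ring $R$ (Kunz's theorem) would immediately yield $f \in I^n$. The numerical obstruction is that $p^e(h(n-1)+1)$ falls short of $h(np^e-1)+1$ by the fixed deficit $(h-1)(p^e-1)$, so one cannot simply re-apply (\ref{EqContP2}) with $p^e$ replaced by $np^e$ --- quite apart from the fact that $np^e$ is not itself a prime power.

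The main obstacle is closing this gap, and I would expect the argument to succeed only under a further structural hypothesis on $R/I$ that compensates for the deficit. In the $F$-pure case, Fedder's Criterion (Theorem \ref{ThmFedderCriterion}) furnishes elements $u \in I^{[p^e]} : I$ outside $\m^{[p^e]}$; multiplying $f^{p^e}$ by an appropriate such $u$ should push it into the target Frobenius power, and $F$-purity is then used to descend via a splitting $R^{1/p^e} \to R$. This is the strategy behind Theorem \ref{ThmGH} of Grifo and Huneke cited in the introduction. For squarefree monomial ideals, I would instead use the primary decomposition $I = \bigcap_i \p_i$ together with a direct pigeonhole argument on monomial exponent vectors, in the spirit of the derivation of (\ref{EqContP}). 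Both strategies appear to require a genuine structural input beyond flatness of Frobenius, so a Frobenius-only proof of the conjecture in full generality does not seem feasible with the present toolkit.
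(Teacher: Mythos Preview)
There is no proof in the paper to compare against: the statement is labeled \emph{Conjecture}, not Theorem, and the paper gives no argument for it. More to the point, immediately after stating it the paper records that Conjecture~\ref{ConjHarbourne} ``was recently proven to be false in general \cite{counterexamples}.'' So any attempt to prove it as stated is doomed, and your concluding sentence --- that a proof in full generality does not seem feasible --- is not a limitation of your toolkit but a reflection of the fact that the statement is false.

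Your analysis of where the Frobenius argument stalls is accurate, and your diagnosis that additional structure on $R/I$ is needed matches exactly what the paper does next: it proves the containment for squarefree monomial ideals (Proposition~\ref{PropHarbSR}) via the Frobenius-like endomorphism $x_i \mapsto x_i^n$ and the primary decomposition, and for $F$-pure quotients (Theorem~\ref{ThmGH}) via Fedder's criterion, just as you sketched. So the substantive content of your proposal is correct, but it should be recast as motivation for those special cases rather than as a proof attempt for the conjecture itself.
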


We point out that this is related to a question raised by Huneke regarding whether 
$\p^{(3)} \subseteq \p^2$, where $\p$ is a prime ideal of codimension $2$ in a 
regular local ring.

Conjecture \ref{ConjHarbourne} was recently proven to be false in general  \cite{counterexamples}. However, the conjecture is true for special classes of ideals.

\begin{proposition}[{\cite[8.4.5]{Seshadri}}]\label{PropHarbSR}
Let $R=\KK[x_1,\ldots,x_d]$ and $I\subseteq R$ be a squarefree monomial ideal of bigheight $h$.
Then, 
$$
I^{(h(n-1)+1)}\subseteq I^{n}
$$
for every $n\in\NN$.
\end{proposition}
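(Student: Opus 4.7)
The plan is to reduce the statement to the sharper combinatorial containment
\[
I^{(h(n-1)+1)} \;\subseteq\; (f_1^n,\dots,f_r^n),
\]
where $f_1,\dots,f_r$ are the minimal squarefree monomial generators of $I$. Because each $f_j^n$ already lies in $I^n$, we have $(f_1^n,\dots,f_r^n)\subseteq I^n$ for free, so such a reduction completes the proof. This step mirrors the chain $I^{(h(p^e-1)+1)} \subseteq I^{[p^e]} \subseteq I^{p^e}$ from \eqref{EqContP2} underlying Theorem \ref{USP-Poly-CharP}; the role of the $e$-th Frobenius in characteristic $p$ is played here by the $\KK$-algebra endomorphism $\phi_n:S\to S$, $x_i\mapsto x_i^n$, since $\phi_n(f_j)=f_j^n$ for every squarefree monomial generator.

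For the main containment I would use the combinatorial description of the symbolic powers of a squarefree monomial ideal. Write $f_j = \prod_{i \in A_j} x_i$ with $A_j \subseteq [d]$. The minimal primes of $I$ are the ideals $\p_C = (x_i : i \in C)$, where $C$ ranges over the minimal vertex covers of the hypergraph on $[d]$ with edges $A_1, \dots, A_r$; in particular $h$ is the maximum size of such a cover $C$. Since each $\p_C$ is a prime generated by variables, a monomial $u = \prod_i x_i^{u_i}$ lies in $\p_C^N$ iff $\sum_{i \in C} u_i \geq N$, and hence $u \in I^{(N)}$ iff this inequality holds for every minimal vertex cover $C$.

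The core of the proof is then a short pigeonhole argument. Suppose, for contradiction, that a monomial $u \in I^{(h(n-1)+1)}$ fails to lie in $(f_1^n,\dots,f_r^n)$. Then $f_j^n \nmid u$ for every $j$, so for each $j$ there is some $i \in A_j$ with $u_i \leq n-1$. The set $L = \{ i \in [d] : u_i \leq n-1 \}$ therefore meets every edge $A_j$, so it contains a minimal vertex cover $C$; necessarily $|C| \leq h$. Summing the coordinate inequalities on $C$ gives
\[
h(n-1) + 1 \;\leq\; \sum_{i \in C} u_i \;\leq\; |C|\,(n-1) \;\leq\; h(n-1),
\]
a contradiction. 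The only subtlety is keeping the dictionary between minimal primes of $I$, minimal vertex covers of the associated hypergraph, and bigheight straight; once that is in place, the pigeonhole is elementary, characteristic-free, and delivers the result over any field $\KK$.
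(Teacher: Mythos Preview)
Your proof is correct and is, at its core, the same pigeonhole argument the paper gives: both show $I^{(h(n-1)+1)}\subseteq I^{[n]}\subseteq I^n$, where $I^{[n]}=(f_1^n,\ldots,f_r^n)$ is the image of $I$ under the Frobenius-like map $\phi_n$. The organizational difference is worth noting. The paper first passes to the minimal prime decomposition, applies the pigeonhole componentwise to get $\p_i^{h(n-1)+1}\subseteq \p_i^{[n]}$ for each $i$, and then invokes faithful flatness of $\phi_n$ to collapse $\bigcap_i \p_i^{[n]}$ back to $(\bigcap_i \p_i)^{[n]}=I^{[n]}$. You instead argue by contrapositive: given a monomial $u\notin I^{[n]}$, you build a single minimal vertex cover $C$ (equivalently, a single minimal prime $\p_C$) witnessing $u\notin \p_C^{h(n-1)+1}$. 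This sidesteps the ``bracket powers commute with intersections'' step entirely, so your version is a touch more elementary and self-contained; the paper's version, on the other hand, makes the analogy with the Frobenius proof of Theorem~\ref{USP-Poly-CharP} more transparent, since the flatness of $\phi_n$ is the exact stand-in for flatness of Frobenius on a regular ring.
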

\begin{proof}
We fix $n\in\NN$.
We consider the Frobenius-like map $\phi:R\to R$ of $\KK$-algebras
defined by $x_i\mapsto x^n_i$. 
We note that $\phi$ is a faithfully flat morphism. 
We note that for an ideal generated by variables  $J=(x_{i_1},\ldots, x_{i_k})$, we have that $J^{nk} \subseteq \phi(J) R=: J^{[n]}$
by the Pigeonhole Principle. 
Let $\p_1,\ldots,\p_j$ be the minimal primes of $I$.
Then,
\begin{align*}
I^{(h(n-1)+1)}&=(\p_1\cap\ldots\cap\p_j)^{(h(n-1)+1)}\\
&=\p^{(h(n-1)+1)}_1\cap\ldots\cap\p^{(h(n-1)+1)}_j\\
&=\p^{[n]}_1\cap\ldots\cap\p^{[n]}_j\\
&=(\p_1\cap\ldots\cap\p_j)^{[n]}\\
&=I^{[n]}\subseteq I^n.
\end{align*}
\end{proof}

Following the idea that Stanley-Reisner rings are a good representative of $F$-pure rings, one may wonder if Proposition \ref{PropHarbSR} also holds for ideals defining $F$-pure rings. This was showed by Grifo and Huneke.

\begin{theorem}[{\cite[Theorem 1.2]{GH}}]\label{ThmGH}
Let $R$ be a regular ring and $I$ be an ideal of bigheight $h$ such that $R/I$ is $F$-pure.
Then, 
$$
I^{(h(n-1)+1)}\subseteq I^{n}
$$
for every $n\in\NN$.
\end{theorem}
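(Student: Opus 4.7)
The plan is to imitate the proof of Proposition \ref{PropHarbSR}, replacing the faithfully flat map $x_i \mapsto x_i^n$ (which only works for monomial ideals because of its interaction with variable-generated primes) by the honest Frobenius endomorphism, and using $F$-purity of $R/I$ via Fedder's criterion to supply Frobenius splittings that are compatible with $I$. The hypothesis that $R/I$ is $F$-pure already forces $R$ to have prime characteristic $p>0$, so I would begin by localizing at a maximal ideal containing $I$ and assuming $(R,\m)$ is an $F$-finite regular local ring.

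\textbf{Key ingredients.} The essential characteristic $p$ input is the sharp Hochster-Huneke containment of Equation \ref{EqContP2}: for every $e\geq 0$,
$$I^{(h(p^e-1)+1)}\subseteq I^{[p^e]}\subseteq I^{p^e}.$$
This already proves the theorem when $n=p^e$, with no use of $F$-purity whatsoever, so the real work lies in bridging from Frobenius powers $p^e$ to arbitrary $n$. That bridge is provided by Fedder's criterion (Theorem \ref{ThmFedderCriterion}), which under the $F$-purity hypothesis gives, for every $e>0$, an element $u_e\in (I^{[p^e]}:I)\setminus \m^{[p^e]}$. Via the trace-map formalism of Remark \ref{Trace}, $u_e$ determines a surjective $R$-linear splitting $\varphi_e:R^{1/p^e}\to R$ satisfying $\varphi_e(I^{1/p^e})\subseteq I$, and a routine induction on $n$ (using that $\varphi_e$ is $R$-linear and that $u_e\cdot I^n\subseteq I^{[p^e]}\cdot I^{n-1}$) upgrades this to $\varphi_e((I^n)^{1/p^e})\subseteq I^n$ for every $n\geq 1$.

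\textbf{Combining.} To conclude, given $f\in I^{(h(n-1)+1)}$, I would raise to a Frobenius power to get $f^{p^e}\in I^{(h(n-1)+1)p^e}$, then use $u_e^{n-1}\in (I^{[p^e]}:I)^{n-1}\subseteq (I^{[p^e]})^{n-1}:I^{n-1}$ to absorb the arithmetic gap between the exponent $(h(n-1)+1)p^e$ and the exponent $h(p^e-1)+1$ required by the sharp containment. The desired conclusion is that $u_e^{n-1}f^{p^e}$ lands in $(I^n)^{[p^e]}$; applying $\varphi_e$ to its $p^e$-th root and using the compatibility from the previous paragraph then returns $f\in I^n$.

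\textbf{Main obstacle.} The crux is precisely this last arithmetic and algebraic accounting: verifying that the combined action of the Hochster-Huneke containment on $f^{p^e}$ and multiplication by an appropriate power of $u_e$ produces an element of $(I^n)^{[p^e]}$, not merely of $I^{[p^e]}$. This is the heart of the Grifo-Huneke argument, and it is exactly the step where $F$-purity is indispensable; without it, Conjecture \ref{ConjHarbourne} can and does fail, as shown by the counterexamples in \cite{counterexamples}.
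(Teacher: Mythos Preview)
Your overall strategy is right --- Fedder's criterion supplies a Frobenius-advantageous element, and the Hochster--Huneke containment $I^{(h(p^e-1)+1)}\subseteq I^{[p^e]}$ bridges symbolic and Frobenius powers --- but two of your stated ingredients are faulty. First, the claim that a splitting $\varphi_e$ compatible with $I$ is automatically compatible with $I^n$ is false: already for $R=\KK[x]$, $I=(x)$, and the standard splitting $\psi(x^{a/p})=x^{a/p}$ if $p\mid a$ and $0$ otherwise, one has $\psi(I^{1/p})\subseteq I$ but $\psi(x^{p/p})=x\notin I^2$, so $\psi((I^2)^{1/p})\not\subseteq I^2$. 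Your ``routine induction'' breaks because the step $u_e I^n\subseteq I^{[p^e]}I^{n-1}$ feeds $(I^{n-1})^{1/p^e}$ to the trace $\Phi$, not to $\varphi_e$, so the inductive hypothesis does not apply. Second, using $u_e^{\,n-1}$ rather than a single $u_e$ makes the conclusion worse: applying $\varphi_e$ to $(u_e^{\,n-1}f^{p^e})^{1/p^e}$ yields $f\cdot\Phi((u_e^{\,n})^{1/p^e})$, and there is no reason for $\Phi((u_e^{\,n})^{1/p^e})$ to be a unit even when $u_e\notin\m^{[p^e]}$.

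The paper's proof avoids both issues by never invoking compatibility of a splitting with $I^n$. One takes any $u_e\in (I^{[p^e]}:I)$ and shows directly, for $e\gg0$, that $u_e\,(I^{(h(n-1)+1)})^{[p^e]}\subseteq (I^n)^{[p^e]}$: split off one factor $u_e\cdot I^{(h(n-1)+1)}\subseteq I^{[p^e]}$ and handle the remaining $(I^{(h(n-1)+1)})^{p^e-1}$ via the symbolic-power containment $I^{((h(n-1)+1)(p^e-1))}\subseteq (I^{(hp^e)})^{n-1}\subseteq (I^{[p^e]})^{n-1}$ (this uses \cite[Theorem~2.6]{HHpowers}, and is exactly the ``arithmetic accounting'' you flagged). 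The finish is then pure flatness of Frobenius: $(I^n)^{[p^e]}:(I^{(h(n-1)+1)})^{[p^e]}=(I^n:I^{(h(n-1)+1)})^{[p^e]}$, so if the desired containment failed this colon would lie in $\n$, forcing $u_e\in\n^{[p^e]}$ and contradicting Fedder's criterion. No splitting compatible with $I^n$ is ever needed.
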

\begin{proof}
We can assume that $R$ is a regular $F$-finite local ring, with maximal ideal $\n$.

We fix $n\in\NN$. Let $f\in I^{[p^e]}:I$, and note that $fI^{(h(n-1)+1)}\subseteq I^{[p^e]}$.
Then we have that
\begin{align*}
f\left( I^{(h(n-1)+1)}\right)^{[p^e]}&\subseteq
f\left( I^{(h(n-1)+1)}\right)^{p^e}\\
& \subseteq\left(f I^{(h(n-1)+1)}\right) \left( I^{(h(n-1)+1)}\right)^{p^e-1}\\
&\subseteq I^{[p^e]} \left( I^{(h(n-1)+1)}\right)^{p^e-1}.
\end{align*}
We now note that, for $e\gg0$,  
\begin{align*}
 \left( I^{(h(n-1)+1)}\right)^{p^e-1} &\subseteq I^{\left((h(n-1)+1)(p^e-1)\right)}\\
& \subseteq I^{((hp^e-1)(n-1)+h(n-1))} \\
&\subseteq\left( I^{(hp^e)}\right)^{n-1} \;\;\;\hbox{\cite[Theorem 2.6]{HHpowers}}\\
&\subseteq\left( I^{[p^e]}\right)^{n-1} \;\;\;\hbox{ by Theorem \ref{USP-Poly-CharP}}.
\end{align*}
Thus, 
$$
f\left(I^{(h(n-1)+1)}\right)^{[p^e]} \subseteq\left( I^{[p^e]}\right)^{n} = \left(I^n\right)^{[p^e]}. 
$$
Since $f$ was any element inside $I^{[p^e]}:I$, we conclude that $$I^{[p^e]}:I\subseteq (I^n)^{[p^e]}: \left(I^{(h(n-1)+1)}\right)^{[p^e]} = \left( I^n: I^{(h(n-1)+1)}\right)^{[p^e]}.$$ By way of contradiction we assume that $I^{(h(n-1)+1)}\not\subseteq I^{n}$. Then,
$I^n: I^{(h(n-1)+1)}\subseteq \n$. It follows that $I^{[p^e]}:I\subseteq  \left( I^n: I^{(h(n-1)+1)}\right)^{[p^e]}\subseteq \n^{p^e}$
for some $e\gg 0$, and this contradicts Fedder's Criterion (Theorem \ref{ThmFedderCriterion}).
\end{proof}

It is not easy to find conditions under which symbolic and ordinary powers coincide. Even for monomial ideals this has been a difficult task. 
There is a important conjecture in optimization theory due to Conforti and  Cornu\'ejols \cite{CC}, which was translated into the context of symbolic and ordinary powers
by Gitler, Villarreal and others \cite {GRV,GVV}. This conjecture is known as the Packing Problem  (for more details see a recent survey on this subject \cite[Subsection 4.2]{SurveySymbPowers}). We now state one of the most significant results in this direction.

\begin{theorem}[{\cite[Theorem 5.9]{SVV}}]\label{ThmBipartite}
Let $G$ be a simple graph on $[r]$, $S=\KK[x_1,\ldots,x_r]$, and $I_G$ be the monomial edge ideal of $G$.
Then, $I_G^{(n)}=I^n_G$ for every $n\in\NN$ if and only if $G$ is bipartite.
\end{theorem}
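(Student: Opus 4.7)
The plan is to prove the two implications separately. For $(\Rightarrow)$, suppose $G$ contains an odd cycle $C$ of length $2k+1$ on vertices $v_1,\ldots,v_{2k+1}$, and set $m=x_{v_1}\cdots x_{v_{2k+1}}$, of degree $2k+1$. Since $I_G^{k+1}$ is generated in degree $2(k+1)>2k+1$, we have $m\notin I_G^{k+1}$. The minimal primes of $I_G$ are the ideals $\p_W=(x_i:i\in W)$ for $W$ a minimal vertex cover of $G$ (Subsection~\ref{SubsecSR}); since $W\cap V(C)$ is a vertex cover of the odd cycle $C$, its cardinality is at least $\tau(C)=k+1$, so the squarefree monomial $m$ uses at least $k+1$ variables lying in $\p_W$, forcing $m\in\p_W^{k+1}$. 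Intersecting over all $W$ yields $m\in I_G^{(k+1)}\setminus I_G^{k+1}$, so equality fails at $n=k+1$.

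For $(\Leftarrow)$, assume $G$ is bipartite; since $I_G^n\subseteq I_G^{(n)}$ is automatic, it suffices to prove the reverse inclusion monomial-by-monomial. Let $m=x^\alpha\in I_G^{(n)}$; the condition $m\in\p_W^n$ for every minimal vertex cover $W$ translates to $\sum_{i\in W}\alpha_i\geq n$ for every vertex cover $W$ of $G$. By the weighted form of K\"onig's theorem, valid because the edge-vertex incidence matrix of a bipartite graph is totally unimodular, there exist non-negative integers $y_e$ indexed by the edges $e$ of $G$ with $\sum_{e\ni i} y_e\leq \alpha_i$ for every $i$ and $\sum_e y_e\geq n$. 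These $y_e$ yield a product of at least $n$ edges dividing $x^\alpha$, whence $m\in I_G^n$.

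The main obstacle is the converse. The forward part amounts to a one-line degree comparison combined with the identity $\tau(C_{2k+1})=k+1$, whereas the converse hinges on the \emph{integrality} of the weighted vertex-cover/matching LP, which by total unimodularity holds exactly when $G$ is bipartite; for non-bipartite graphs the fractional optimum may strictly exceed the integer one, which is precisely the phenomenon exploited by the odd cycle in the first half. Notably, the argument is characteristic-free and does not invoke Frobenius.
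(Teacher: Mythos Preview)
The paper does not supply its own proof of Theorem~\ref{ThmBipartite}; it is quoted from \cite[Theorem 5.9]{SVV} and used as a black box later on. So there is no in-paper argument to compare against.

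Your proof is correct and is essentially the classical one. A couple of minor points worth tightening: in the $(\Leftarrow)$ direction you pass from ``$\sum_{i\in W}\alpha_i\ge n$ for every \emph{minimal} vertex cover'' to ``for every vertex cover''; this is harmless since any vertex cover contains a minimal one and the $\alpha_i$ are non-negative, but it deserves a word. Likewise, the appeal to total unimodularity should be phrased as: the LP $\max\sum_e y_e$ subject to $\sum_{e\ni i}y_e\le\alpha_i$, $y_e\ge0$ has integral optimal value equal to its LP dual $\min_{z}\sum_i\alpha_i z_i$ over fractional vertex covers, which in turn equals $\min_W\sum_{i\in W}\alpha_i\ge n$; hence an \emph{integral} feasible $y$ with $\sum_e y_e\ge n$ exists. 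Once the $y_e$ are in hand, the monomial $\prod_{e=\{i,j\}}(x_ix_j)^{y_e}$ divides $x^\alpha$ and lies in $I_G^{\sum_e y_e}\subseteq I_G^n$, as you say.

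The $(\Rightarrow)$ direction is fine: any vertex cover of $G$ restricts to a vertex cover of the odd cycle $C_{2k+1}$, whose cover number is $k+1$, so the squarefree product of the cycle vertices lies in every $\p_W^{k+1}$ but has degree $2k+1<2(k+1)$ and hence misses $I_G^{k+1}$.
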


Aiming to study the Packing Problem  the second and third authors provide a finite condition to test if ordinary and symbolic powers coincide for monomial ideals.

\begin{theorem}[{\cite[Theorem 4.8]{MNB}}]\label{ThmMNBSqFree}
Let $I\subseteq S=\KK[x_1,\ldots,x_r]$ be an squarefree monomial ideal generated by $\mu$ elements.
Then, $I^{(n)}=I^{n}$ for every $n\in\NN$ if and only if 
$I^{(n)}=I^{n}$ for every $n\leq \frac{\mu}{2}$.
\end{theorem}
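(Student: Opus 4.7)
The forward implication is trivial. For the converse, my plan is to show that if equality $I^{(n)} = I^n$ fails for some $n$, then it already fails at some $n \leq \mu/2$; equivalently, the smallest integer $n_0$ with $I^{(n_0)} \neq I^{n_0}$ (when it exists) satisfies $n_0 \leq \mu/2$. The natural tool is the symbolic Rees algebra $\mathcal{R}^s(I) = \bigoplus_{n \geq 0} I^{(n)} t^n$, which contains the ordinary Rees algebra $\mathcal{R}(I) = \bigoplus_{n \geq 0} I^n t^n$. Since $I$ is a squarefree monomial ideal, $\mathcal{R}^s(I)$ is finitely generated as an $\NN$-graded $S$-algebra, a classical fact going back to Lyubeznik and Herzog--Hibi--Trung. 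The equality $\mathcal{R}^s(I) = \mathcal{R}(I)$ holds exactly when $I^{(n)} = I^n$ for all $n$, and this in turn holds iff every minimal monomial $S$-algebra generator of $\mathcal{R}^s(I)$ sits in $t$-degree $1$. Thus the smallest defect exponent $n_0$ coincides with the smallest $t$-degree $\geq 2$ of a minimal $S$-algebra generator of $\mathcal{R}^s(I)$, and proving the theorem reduces to bounding such a minimal $t$-degree by $\mu/2$.

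The central technical step is therefore the following claim: if $M t^n$ is a minimal monomial $S$-algebra generator of $\mathcal{R}^s(I)$ with $n \geq 2$, then $n \leq \mu/2$. Writing $M = x^\alpha$ with $\alpha \in \NN^r$, minimality forbids any factorization $M = M_1 M_2$ with $M_j \in I^{(n_j)}$ and $n_1 + n_2 = n$, $n_1, n_2 \geq 1$. The condition $M \in I^{(n)}$ reads $\sum_{x_i \in \p} \alpha_i \geq n$ for every minimal prime $\p$ of $I$ (equivalently, for every minimal vertex cover of the clutter associated to $I$). Using that $I$ has exactly $\mu$ generators $m_1, \ldots, m_\mu$, the plan is to run a pigeonhole/LP-duality argument on the $0$--$1$ incidence matrix between the generators $m_k$ and the minimal primes $\p$: the assumption $n > \mu/2$ should provide enough slack to produce a nontrivial splitting $\alpha = \alpha_1 + \alpha_2$ with $x^{\alpha_j} \in I^{(n_j)}$ for some $n_j \geq 1$ with $n_1 + n_2 = n$, contradicting minimality.

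The main obstacle is precisely this combinatorial decomposition: showing that an integer vector $\alpha$ lying in the level-$n$ scaled Newton polyhedron must split nontrivially into integer vectors in smaller scalings once $n > \mu/2$. The factor $1/2$ in the bound reflects that the $\mu$ generators must distribute between the two pieces $\alpha_1, \alpha_2$, and I expect the argument to proceed via fractional covers dualized to fractional matchings, promoted to integer matchings using the squarefreeness of $I$ (which forces the relevant incidence matrix to be $0$--$1$). A cleaner alternative, which would also prove the theorem, is to directly exhibit a generator $m_k$ dividing $M$ such that $M/m_k \in I^{(n-1)}$ for $n > \mu/2$; this yields the containment $I^{(n)} \subseteq I \cdot I^{(n-1)}$ and, combined with the hypothesis $I^{(n)} = I^n$ for $n \leq \mu/2$, allows a straightforward induction upward to conclude $I^{(n)} = I^n$ for every $n \in \NN$.
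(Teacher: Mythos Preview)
The survey does not give a proof of this theorem; it only records that the original argument in \cite{MNB} ``was proven using Frobenius-like morphisms'', i.e., via the $\KK$-algebra maps $x_i\mapsto x_i^m$ on $S$ and their splittings $\Phi_m^S$ (see Definition~\ref{defSplitting} and Lemma~\ref{lemmaMonVal}). Your route is entirely different: you recast the problem in terms of the minimal $t$-degree of an $S$-algebra generator of the symbolic Rees algebra $\mathcal{R}^s(I)$ and then aim for a polyhedral/LP-duality bound. The reduction to $\mathcal{R}^s(I)$ is correct and standard, but from that point on what you have is a plan, not a proof: you explicitly label the integer decomposition of $\alpha$ as ``the main obstacle'' and only gesture at pigeonhole and fractional-cover heuristics, none of which you carry out.

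More importantly, your ``cleaner alternative'' is false as stated. You claim that for every monomial $M\in I^{(n)}$ with $n>\mu/2$ one can peel off a generator $m_k\mid M$ with $M/m_k\in I^{(n-1)}$, hence $I^{(n)}\subseteq I\cdot I^{(n-1)}$ unconditionally in that range. Take $I=(xy,yz,zx)$, so $\mu=3$, and $M=xyz\in I^{(2)}$; dividing by any generator leaves a single variable, which is not in $I=I^{(1)}$. Thus $xyz\notin I\cdot I^{(1)}$ even though $2>3/2$. (The same example shows the bound in the statement must be read with a ceiling; but even granting that, your containment is phrased without invoking the hypothesis $I^{(k)}=I^k$ for small $k$, and you give no mechanism linking the two.) The point of the Frobenius-like argument is precisely that the splittings $\Phi_m^S$ move $I^{(nm+1)}$ into $I^{(n+1)}$ while interacting compatibly with ordinary powers, which lets one propagate the equalities from small $n$ to all $n$ without ever needing an integral decomposition of a single monomial. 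Your combinatorial scheme would need a comparable structural tool before it can close.
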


Theorem \ref{ThmMNBSqFree} was proven using Frobenius-like morphisms. From this criterion, we obtain one for certain homogeneous ideals.
For this, we first need to recall a result by Sullivant.

\begin{proposition}[{\cite[Proposition 5.1]{Sull}}]\label{PropSull}
Let $\KK$ be a perfect field, $S=\KK[x_1,\ldots, x_d]$ be a polynomial ring, and $I\subseteq S$ be a radical ideal. Suppose that there exists a monomial order $<$ such that $\IN_<(I)$ is a squarefree monomial ideal. Then,
$$\IN_<(I^{(n)})=\left(\IN_<(I)\right)^{(n)}$$
for every $n\in\NN.$
\end{proposition}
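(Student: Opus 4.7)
The plan is to exploit the Gr\"obner deformation from Remark \ref{RemNotation}. By Theorem \ref{thm initial weight}, pick a weight $\omega \in \ZZ_{>0}^d$ so that $\IN_\omega(I) = \IN_<(I)$, and let $J = {\rm hom}_\omega(I) \subseteq T = S[t]$, $R = T/J$. By Lemma \ref{lemma regular t,t-1}, $R$ is a flat $\KK[t]$-module with $R/tR \cong S/\IN_<(I)$ and $R/(t-a)R \cong S/I$ for $a \in \KK \setminus \{0\}$; both fibers are reduced, the former because $\IN_<(I)$ is squarefree and the latter because $I$ is radical.

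The first step is to show that $R$ itself is reduced. An $\omega$-homogeneous nilpotent $f$ maps to zero in $R/tR$, hence $f \in tR$; since $t$ is a non-zerodivisor by Lemma \ref{lemma regular t,t-1}, writing $f = tg$ forces $g$ to be nilpotent as well, and iterating yields $f \in \bigcap_n t^n R = 0$ in the graded setting. Writing $I = \p_1 \cap \cdots \cap \p_s$ for the minimal primes of $I$, each $P_i := {\rm hom}_\omega(\p_i)$ is prime in $T$ (test primality by dehomogenizing and using $t$-regularity on $T/P_i$). These give all minimal primes of $J$, so $J = P_1 \cap \cdots \cap P_s$ and therefore
\[
J^{(n)} = P_1^{(n)} \cap \cdots \cap P_s^{(n)}.
\]

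The key step is to match $J^{(n)}$ with ${\rm hom}_\omega(I^{(n)})$. The associated primes of the latter in $T$ are the $P_i$: localizing at $P_i$ and specializing to $t = 1$ yields the local symbolic power $\p_i^n S_{\p_i}$, which lifts by flatness over $\KK[t]_{(t-1)}$ to $P_i^n T_{P_i} = P_i^{(n)} T_{P_i}$. Thus the $P_i$-primary component of ${\rm hom}_\omega(I^{(n)})$ is exactly $P_i^{(n)}$, and ${\rm hom}_\omega(I^{(n)}) = J^{(n)}$. Specializing at $t = 0$, the definition of homogenization gives
\[
\IN_<(I^{(n)}) = \bigl({\rm hom}_\omega(I^{(n)}) + tT\bigr)/tT = \bigl(J^{(n)} + tT\bigr)/tT.
\]

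Finally I would recognize the right-hand side as $\IN_<(I)^{(n)}$. The minimal primes $\mathfrak{q}_j$ of $\IN_<(I)$ are monomial, and they appear as the minimal components of $(P_i + tT)/tT$ as $i$ ranges over $\{1,\dots,s\}$. Since $t \notin P_i$, the element $t$ is regular on each $T/P_i^{(n)}$, and a Tor-vanishing argument applied to the Mayer--Vietoris sequences governing finite intersections shows that specialization at $t = 0$ commutes with the intersection $\bigcap_i P_i^{(n)}$, yielding $\bigcap_j \mathfrak{q}_j^{(n)} = \IN_<(I)^{(n)}$ (the last equality because each $\mathfrak{q}_j$ is generated by a subset of the variables). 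The main obstacle is precisely this last matching: ensuring that no new embedded components arise modulo $t$ and that the specialized primes reassemble exactly into the $\mathfrak{q}_j$; this is where the squarefreeness of $\IN_<(I)$ is decisive, since it forces the special fiber to be reduced with the correct minimal prime structure.
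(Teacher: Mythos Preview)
The paper does not supply its own proof of this proposition; it simply cites Sullivant and remarks that the argument carries over from algebraically closed to perfect fields. So there is nothing in the paper to compare your argument against, and the question is whether your outline stands on its own.

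Your deformation set-up and Steps 1--4 are essentially sound (the identity $\hom_\omega(\p_i^{(n)}) = P_i^{(n)}$ can be made rigorous by inverting $t$ and using that symbolic powers commute with the flat extension $S \hookrightarrow S[t,t^{-1}]$). There is a minor slip in Step~5: the weight $\omega$ was chosen so that $\IN_\omega(I) = \IN_<(I)$, but nothing guarantees $\IN_\omega(I^{(n)}) = \IN_<(I^{(n)})$ for that same $\omega$. This is fixable by choosing $\omega$ to work simultaneously for the finitely many ideals in play, but it should be said.

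The real problem is Step~6, which you yourself flag as the main obstacle. Your Mayer--Vietoris/Tor-vanishing sketch would require $t$ to be a non-zerodivisor on every $T/\bigl(P_{i_1}^{(n)} + \cdots + P_{i_k}^{(n)}\bigr)$; this is not automatic, since sums of $t$-saturated homogeneous ideals need not be $t$-saturated. Even granting that, what the argument would yield is
\[
\IN_\omega\bigl(I^{(n)}\bigr) \;=\; \bigcap_i \bigl(P_i^{(n)} + tT\bigr)/tT \;=\; \bigcap_i \IN_\omega\bigl(\p_i^{(n)}\bigr),
\]
and you would still need $\IN_\omega(\p_i^{(n)}) = \IN_\omega(\p_i)^{(n)}$ for each minimal prime $\p_i$ --- which is exactly the statement you are trying to prove, now for $\p_i$ in place of $I$. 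Nothing in your argument reduces the prime case to something simpler, and there is no reason to expect $\IN_\omega(\p_i)$ to be squarefree just because $\IN_\omega(I)$ is, so the inductive structure you seem to be relying on does not exist. The assertion that the specialized primes ``reassemble exactly into the $\mathfrak q_j$'' is precisely the content of the proposition and cannot be assumed. As written, Step~6 is circular.
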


We note that Sullivant stated the previous result for algebraiclly closed fields, but the same proof works for perfect fields.

\begin{theorem}\label{ThmInitialEqPowers}
Let $\KK$ be a perfect field, $S=\KK[x_1,\ldots, x_d]$ be a polynomial ring,  $I\subseteq S$ be a radical ideal, and $n\in\NN$. Suppose that there exists a monomial order $<$ such that $\IN_<(I)$ is a squarefree monomial ideal.
If $\left(\IN_<(I)\right)^{(n)}=\left(\IN_<(I)\right)^{n}$,
then $I^{(n)}=I^n$.
\end{theorem}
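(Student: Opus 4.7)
The plan is to show that $\IN_<(I^n) = \IN_<(I^{(n)})$ by sandwiching these initial ideals between $(\IN_<(I))^n$ on one side and $(\IN_<(I))^{(n)}$ on the other, and then to conclude that $I^n = I^{(n)}$ from the general fact that an ideal containment together with equality of initial ideals forces equality of the ideals themselves.

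First I would record two general facts that hold for any ideal $J \subseteq S$ and any monomial order $<$. The elementary inclusion
\[
\left(\IN_<(J)\right)^n \subseteq \IN_<(J^n)
\]
holds because a product $\IN_<(f_1)\cdots \IN_<(f_n)$ of initial terms equals $\IN_<(f_1\cdots f_n)$ and $f_1\cdots f_n \in J^n$. Also, from $I^n \subseteq I^{(n)}$ (which is immediate from the definition), we obtain
\[
\IN_<(I^n) \subseteq \IN_<(I^{(n)}).
\]

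Next, I would chain these with Proposition \ref{PropSull} and the hypothesis to produce
\[
\left(\IN_<(I)\right)^n \subseteq \IN_<(I^n) \subseteq \IN_<(I^{(n)}) = \left(\IN_<(I)\right)^{(n)} = \left(\IN_<(I)\right)^n,
\]
where the penultimate equality uses Sullivant's result and the last equality is the hypothesis. Consequently every containment in the display is an equality; in particular $\IN_<(I^n) = \IN_<(I^{(n)})$.

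To finish, I would invoke the standard consequence of Macaulay's basis theorem: if $J_1 \subseteq J_2$ are ideals of $S$ with $\IN_<(J_1) = \IN_<(J_2)$, then $J_1 = J_2$. Indeed, the monomials outside $\IN_<(J_i)$ form a $\KK$-basis of $S/J_i$ for $i=1,2$; since these two sets of standard monomials coincide, the natural surjection $S/J_1 \twoheadrightarrow S/J_2$ sends a basis bijectively onto a basis and is therefore an isomorphism, forcing $J_1 = J_2$. Applying this with $J_1 = I^n$ and $J_2 = I^{(n)}$ yields $I^{(n)} = I^n$, as claimed.

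I expect no serious obstacle here; the argument is essentially formal once one has Proposition \ref{PropSull} at hand. The only point that requires a little care is the closing step that equal initial ideals plus containment implies equality of ideals, which does not require $I$ to be homogeneous because $<$ is a global monomial order and Macaulay's theorem produces the standard monomial basis in full generality.
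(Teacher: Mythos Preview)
Your proof is correct and follows essentially the same approach as the paper: both establish the chain $(\IN_<(I))^n \subseteq \IN_<(I^n) \subseteq \IN_<(I^{(n)}) = (\IN_<(I))^{(n)}$ via Proposition~\ref{PropSull}, use the hypothesis to collapse it, and then conclude $I^n = I^{(n)}$ from the containment $I^n \subseteq I^{(n)}$ together with equality of initial ideals. You are slightly more explicit than the paper in justifying the first inclusion and the final Macaulay-type step, but the argument is the same.
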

\begin{proof}
We have that
\begin{equation}\label{EqContPowersIn}
\left(\IN_<(I)\right)^{n}\subseteq \IN_<(I^{n})\subseteq \IN_<(I^{(n)})=\left(\IN_<(I)\right)^{(n)},
\end{equation}

where the containment in the middle follows from Proposition \ref{PropSull}.
Since the two ideals in the extremes in Equation \ref{EqContPowersIn} are equal, we have that $\IN_<(I^{n})=\IN_<(I^{(n)})$. Since $I^n\subseteq I^{(n)}$, we conclude that  $I^n =I^{(n)}$.
\end{proof}

Theorem \ref{ThmInitialEqPowers} gives a different proof that the symbolic and ordinary powers coincide for  binomial edge ideals of closed graph, which was first shown by Ene and Herzong \cite{EHBEI}.

\begin{corollary}[{\cite[Theorem 3.3 and Corollary 3.5]{EHBEI}}]
Let $G$ be a simple graph on $[r]$, $S=\KK[x_1,\ldots,x_r,y_1,\ldots,y_r]$ be a polynomial ring,  $J_G$ be the binomial edge ideal associated to $R$, and $<$ be the lexicographical monomial order.
If $\left(\IN_<(J_G)\right)^{(n)}=\left(\IN_<(J_G)\right)^{n}$ for every $n\in\NN$,
then $J_G^{(n)}=J_G^n$ for every $n\in\NN$. In particular, if $G$ is a closed graph, then  $J_G^{(n)}=J_G^n$ for every  $n\in\NN$.
\end{corollary}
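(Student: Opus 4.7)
The plan is to apply Theorem \ref{ThmInitialEqPowers} directly, and then to recognize that for closed graphs the hypothesis on initial ideals is satisfied by invoking the structure theorem of Theorem \ref{ThmBipartite}.

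For the first assertion, I would set $I = J_G \subseteq S$ and take $<$ to be the lexicographic monomial order. By the theorem of Herzog-Hibi-Hreinsdóttir-Kahle-Rauh and Ohtani cited at the end of Section \ref{SubsecBinomialEdgeIdeals}, $\IN_<(J_G)$ is a squarefree monomial ideal, so the hypothesis of Theorem \ref{ThmInitialEqPowers} is met for every $n$. Under the assumption $\left(\IN_<(J_G)\right)^{(n)}=\left(\IN_<(J_G)\right)^{n}$, applying Theorem \ref{ThmInitialEqPowers} for each $n$ yields $J_G^{(n)} = J_G^n$, which proves the first half. This part is essentially just citing the appropriate theorem.

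For the second assertion, I would use the characterization of closed graphs: $G$ is closed (with respect to some labeling of $[r]$) precisely when the natural binomial generators $\{x_iy_j - x_jy_i : \{i,j\} \in E(G)\}$ form a Gröbner basis of $J_G$ with respect to the lexicographic order. Consequently, $\IN_<(J_G)$ is generated by the squarefree degree-two monomials $\{x_iy_j : i<j,\; \{i,j\} \in E(G)\}$. These monomials form the edge ideal of a bipartite graph $H$ on the vertex set $\{x_1,\ldots,x_r\} \sqcup \{y_1,\ldots,y_r\}$, whose edges connect $x_i$ to $y_j$ whenever $i<j$ and $\{i,j\} \in E(G)$.

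Since $H$ is bipartite, Theorem \ref{ThmBipartite} applies and gives
\[
\left(\IN_<(J_G)\right)^{(n)}=\left(\IN_<(J_G)\right)^{n}
\]
for every $n\in\NN$. The conclusion $J_G^{(n)} = J_G^n$ then follows from the first part of the corollary. The main subtlety is making sure the initial ideal really is the edge ideal of a bipartite graph, which is immediate from the two-block structure of the variables $x_i$ and $y_j$ combined with the fact that each leading monomial $x_iy_j$ uses precisely one variable from each block; this rules out any odd cycle in $H$.
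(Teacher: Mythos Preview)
Your proposal is correct and follows essentially the same route as the paper: apply Theorem \ref{ThmInitialEqPowers} using that $\IN_<(J_G)$ is squarefree, and for closed graphs invoke the quadratic Gr{\"o}bner basis characterization to see that $\IN_<(J_G)$ is the edge ideal of a bipartite graph, then conclude via Theorem \ref{ThmBipartite}. The only difference is that you spell out explicitly why the two-block variable structure forces bipartiteness, whereas the paper simply cites \cite[Theorem 1.1]{HHHKR10} for this.
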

\begin{proof}
This follows from Theorem \ref{ThmInitialEqPowers}, because
$\IN_<(J_G)$ is a radical ideal \cite[Theorem 2.1]{HHHKR10}. 
The claim about closed ideals follows from two facts. 
First,  there exists a lexicographical order such that $\IN_<(J_G)$ is a monomial edge ideal associated to a bipartite graph \cite[Theorem 1.1.]{HHHKR10}. 
Second, the ordinary and symbolic powers coincide for monomial edge ideals of bipartite graphs by Theorem \ref{ThmBipartite}.
\end{proof}

\begin{theorem}
Let $\KK$ be a perfect field, $S=\KK[x_1,\ldots,x_d]$ be a polynomial ring, $I\subseteq S$ be a radical ideal and $<$ be a monomial order.
Suppose that $\IN_<(I)$ is a squarefree monomial ideal generated by $\mu$ elements.
If $\left(\IN_<(I)\right)^{(n)}=\left(\IN_<(I)\right)^{n}$ for every $n\leq \frac{\mu}{2}$, then $I^{(n)}=I^n$ for every $n\in\NN$.
\end{theorem}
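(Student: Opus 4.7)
The plan is a short, direct combination of the two previously established results in this section, namely Theorem \ref{ThmMNBSqFree} and Theorem \ref{ThmInitialEqPowers}. The hypotheses of the statement are tailored precisely so that both tools apply to the squarefree monomial ideal $J := \IN_<(I)$, and then the conclusion transfers from $J$ back to $I$.

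First, I would set $J = \IN_<(I) \subseteq S$. By assumption $J$ is a squarefree monomial ideal generated by $\mu$ elements, and by assumption $J^{(n)} = J^{n}$ for all $n \leq \mu/2$. This is exactly the finite checking condition in Theorem \ref{ThmMNBSqFree} applied to $J$, so we may conclude
\[
\big(\IN_<(I)\big)^{(n)} \;=\; \big(\IN_<(I)\big)^{n} \qquad \text{for every } n \in \NN.
\]

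Second, for each fixed $n \in \NN$ I would invoke Theorem \ref{ThmInitialEqPowers}. Its hypotheses are satisfied: $\KK$ is perfect, $I$ is a radical ideal of the polynomial ring $S$, the monomial order $<$ on $S$ is given, and $\IN_<(I)$ is squarefree by hypothesis. The additional input required by that theorem, namely $\big(\IN_<(I)\big)^{(n)} = \big(\IN_<(I)\big)^{n}$, was just established in the previous step. Therefore Theorem \ref{ThmInitialEqPowers} yields $I^{(n)} = I^n$. Since $n$ was arbitrary, this gives the desired equality for all $n \in \NN$.

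There is essentially no obstacle beyond matching up hypotheses. The only verification that deserves care is that the ingredient from Theorem \ref{ThmMNBSqFree} requires the squarefree monomial ideal to be generated by $\mu$ elements and only uses the symbolic/ordinary agreement up to $n \leq \mu/2$, both of which are precisely the standing assumptions; and that Theorem \ref{ThmInitialEqPowers} is applied for each $n$ individually, not simultaneously, so no uniformity issues arise. No new characteristic-$p$ argument is needed, because the heavy lifting (the Frobenius-like map reduction giving Theorem \ref{ThmMNBSqFree}, and the Gröbner degeneration of symbolic powers via Proposition \ref{PropSull} used in Theorem \ref{ThmInitialEqPowers}) has already been done.
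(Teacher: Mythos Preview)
Your proposal is correct and follows exactly the same approach as the paper's proof, which simply states that the result follows from Theorems \ref{ThmMNBSqFree} and \ref{ThmInitialEqPowers}. Your write-up just makes explicit the two-step combination (first apply Theorem \ref{ThmMNBSqFree} to $J=\IN_<(I)$, then feed the conclusion into Theorem \ref{ThmInitialEqPowers} for each $n$) that the paper leaves implicit.
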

\begin{proof}
This follows from Theorems \ref{ThmMNBSqFree} and \ref{ThmInitialEqPowers}.
\end{proof}

\section{Asymptotic growth of regularity and depth of graded families of ideals}\label{AsympSec}

Throughout  this section we assume  $R$ is a Noetherian ring and $I\subseteq R$ is an ideal, which is homogeneous in the graded case. We begin with the following definition.

\begin{definition}
A sequence of  ideals $\I= \{I_n\}_{n\in \NN}$ in $R$ is a {\it graded family} if $I_0 =R$, and $I_mI_n\subseteq I_{n+m}$ for every $m,n\in \NN$.  A graded family is a {\it filtration} if $I_{n+1}\subseteq I_n$ for every $n\in \NN$.
\end{definition}

We include below some examples of graded families of ideals. For more information see \cite{ein2003} and \cite{HSIC}.

\begin{example}\label{gradedFamiliesEx} The following are examples of graded families of ideals.
\begin{enumerate}
\item {\it Regular powers.} The powers of the ideal $I$, i.e.,  $\{I^n\}_{n\in \NN}$.
\item  {\it Integral closures:} An element $y\in R$ is {\it integral} over $I$ if there exists an integral  relation $y^n+a_{1}y^{n-1}+\cdots +a_n=0$ for some $a_j\in I^j$. The {\it integral closure} of $I$, denoted by $\overline{I}$, is the ideal generated by all the integral elements over $I$. The integral closures of the regular powers of $I$, i.e., $\{\overline{I^n}\}_{n\in \NN}$ form a graded family.

\item {\it Colon ideals:} Given another ideal $J$ in $R$, the colon ideals  $\{(I^n:_R J^\infty)\}_{n\in \NN}$ form a graded family. In particular, if $(R,\m,k)$ is local, the {\it saturations}  $\widetilde{I^n}=\{(I^n:_R \m^\infty)\}_{n\in \NN}$  form a graded family of ideals.

\item {\it Symbolic powers:}  The  symbolic powers of $I$,  $I^{(n)}=\displaystyle\bigcap_{\fp \in \Min(I)} (I^nR_\fp \cap R),$ form a graded family $\{I^{(n)}\}_{n\in \NN}$ (cf. Section \ref{SecSymb}).

\item {\it Initial ideals:} If $R$ is a polynomial ring over a field, and $I$ is a homogeneous ideal, the initial ideals   $\{\IN_<(I^n)\}_{n\in \NN}$ with respect to any monomial order (cf. Section \ref{GroebnerSubSec})  form a graded family.

\item {\it Ideals arising from valuations:} If $R$ is an integral domain  {\it (rank one) valuation} on $R$ is a function $\nu: R\setminus \{0\}\to \mathbb{R}_{\gs 0}$ such that $\nu(xy)=\nu(x)+\nu(y)$ and $\nu(x+y)\geqslant\min\{\nu(x),\nu(y)\}$ for every $x,y$. For each $n\in \NN$ we define the ideal $I_n(\nu)=\{x\in R\mid \nu(x)\geqslant n\}$. The ideals $\{I_n(\nu)\}_{n\in \NN}$
 form a graded family.

\end{enumerate}

\end{example}

Given a graded family $\I$, we can define the following graded ring whose components are the members are its members $\I$.

\begin{definition}
Let $\I= \{I_n\}_{n\in \NN}$ be a graded family of ideals. The {\it Rees algebra} of $\I$ is the graded ring $\R(\I)=\oplus_{n\in \NN} I_n t^n\subseteq R[t]$. 
\end{definition}

It is an active research topic in commutative algebra to study the asymptotic behavior of   homological invariants of graded families. In the following problem we include  some of these invariants.

\begin{problem}
Assume $(R,\m,\KK)$ is Noetherian local, or $\NN$-graded over a local ring $(R_0,\m_0)$ with $\m=\m_0 \oplus_{n>0} R_n$. Given a graded family of ideals $\I=\{I_n\}_{n\in \NN}$, study the asymptotic behavior of the following sequences.
\begin{enumerate}
\item In the graded case, the regularities $\{\reg(I_n)\}_{n\in \NN}$.
\item The depths $\{\depth(R/I_n)\}_{n\in \NN}$; equivalently, the projective dimensions $\{\pd(R/I_n)\}_{n\in \NN}$ if $R=\KK[x_1,\ldots, x_d]$ is a polynomial ring.
\end{enumerate}
\end{problem}

In the following subsections we discuss each of the parts of this problem in more detail, including its history, the contributions made using positive  characteristic methods, and some open questions.

\subsection{Regularities} In this subsection $R$ is a Noetherian $\NN$-graded ring over a local ring  $(R_0,\m_0)$ with $\m=\m_0 \oplus_{n>0} R_n$. The study of regularities of graded families of ideals began with the work of Chandler \cite{chandler97} and of Geramita-Gimigliano-Pitteloud \cite{geramita95} where it was shown that over a polynomial ring $R$ and homogeneous ideal $I$, once has $\reg(I^n)\leqslant n\reg(I)$ for every $n\in \NN$ provided $\dim(R/I)\leqslant 1$. This fact was conjectured to be true for arbitrary $I$ \cite[Conjecture 1]{chandler97}. This conjecture was partially solved by Swanson  who showed  the sequence $\{\reg(I_n)\}_{n\in \NN}$ is bounded by a linear function \cite[Theorem 1]{swanson97}.  Although  Chandler's conjecture in its original form seems  to be open,  the sequence of regularities was shown to  coincide with a linear function for $n$ sufficiently large.  We include below a more general version of this result. 

We recall that for an $R$-module $M$, an ideal  $J\subseteq I$ is a {\it $M$-reduction} of $I$ if $I^{n+1}M=JI^nM$ for $n\gg 0$. If $d(N)$ denotes the maximal degree of a generator in a minimal homogeneous generating set of the $R$-module, $M$, we define $$\rho_M(I):=\{d(J)\mid J \text{ is an } M\text{-reduction of }I\}.$$

\begin{theorem}[\cite{CHT,Kod,trung05}]\label{regPowers}
Let $R=R_0[R_1]$ be a standard graded Noetherian ring, $I\subseteq R$ a homogeneous ideal, and $M$ a finitely generated graded $R$-module. Then there exists an integer $e$ such that $$\reg(I^nM)=\rho_M(I)n+e$$ for $n\gg 0$.
\end{theorem}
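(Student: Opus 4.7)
The plan is to follow the approach of Cutkosky--Herzog--Trung and Kodiyalam: study the asymptotic regularity of $I^n M$ through the bigraded local cohomology of the Rees module. I would set
\[
\mathcal{A} := \bigoplus_{n \gs 0} I^n t^n, \qquad \mathcal{N} := \bigoplus_{n \gs 0} I^n M \cdot t^n,
\]
bigraded by the internal $R$-grading and the external $t$-grading. Since $R$ is standard graded and $I$, $M$ are finitely generated, $\mathcal{A}$ is a finitely generated bigraded $R_0$-algebra and $\mathcal{N}$ is a finitely generated bigraded $\mathcal{A}$-module. Fix a minimal $M$-reduction $J \subseteq I$ realizing $d(J) = \rho_M(I)$ (so that $JI^nM = I^{n+1}M$ for $n \gg 0$), and let $\mathcal{B} \subseteq \mathcal{A}$ be the subalgebra generated in $t$-degree one by (lifts of) a minimal homogeneous generating set of $J$.

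Because local cohomology commutes with direct sums, there is a bigraded isomorphism $H^i_\m(\mathcal{N}) \cong \bigoplus_{n \gs 0} H^i_\m(I^n M) \cdot t^n$, so that $a_i(I^n M)$ is exactly the top internal degree in the $t^n$-strand. The central step --- in my view the hardest part of the argument --- is to establish that $H^i_\m(\mathcal{N})$ is a finitely generated bigraded $\mathcal{B}$-module for each $i$. This uses Noetherianity of $\mathcal{A}$, the reduction relation $JI^nM = I^{n+1}M$ for $n \gg 0$, and a careful bigraded \v{C}ech complex computation of $H^\bullet_\m(\mathcal{N})$. Granting this finite generation, a standard Hilbert-function argument for finitely generated bigraded modules over $\mathcal{B}$ gives that $n \mapsto a_i(I^n M)$ is eventually linear in $n$ with slope at most $\rho_M(I)$, the bound coming from the fact that $\mathcal{B}$ is generated in internal degrees $\ls \rho_M(I)$ and $t$-degree one.

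To pin the slope down as exactly $\rho_M(I)$, I would invoke Proposition \ref{PropDegGenReg}, which in the standard graded case gives $\alpha_S(I^n M) \ls \reg(I^n M)$, together with the observation that the maximal degree of a minimal generator of $I^n M$ grows asymptotically at slope exactly $\rho_M(I)$: a strictly smaller slope for every $a_i$ would allow one to extract an $M$-reduction of $I$ of generating degree below $\rho_M(I)$, contradicting the minimality in the definition of $\rho_M(I)$. Finally, since $\reg(I^n M) = \max_i(a_i(I^n M) + i)$ is a finite maximum of eventually linear functions of $n$, each of slope at most $\rho_M(I)$ and at least one attaining this slope, the maximum is itself eventually linear of slope $\rho_M(I)$, giving $\reg(I^n M) = \rho_M(I)\,n + e$ for $n \gg 0$ and some integer $e$.
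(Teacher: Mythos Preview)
The paper does not prove this theorem; it is stated with attribution to \cite{CHT,Kod,trung05} and followed only by a remark noting that the general version is due to Trung and Wang. So there is no in-paper proof to compare against, and I evaluate your outline on its own.

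Your overall architecture --- pass to the Rees module $\mathcal{N}$, exploit the bigraded structure, and pin down the slope via a reduction argument --- is indeed the strategy of the cited papers. However, the step you single out as central is misstated and, as written, false: the module $H^i_\m(\mathcal{N}) \cong \bigoplus_{n \gs 0} H^i_\m(I^n M)\,t^n$ is essentially never finitely generated over $\mathcal{B}$ (or over $\mathcal{A}$). Each summand $H^i_\m(I^n M)$ is Artinian over $R$ and, for $i>0$, typically has nonzero graded pieces in all sufficiently negative internal degrees; a finitely generated bigraded module over the nonnegatively graded algebra $\mathcal{B}$ cannot have this shape. What the literature actually establishes is weaker but sufficient: for a finitely generated bigraded module $\mathcal{N}$ over a Noetherian bigraded ring that is standard in the $t$-grading, the function $n \mapsto a_i(\mathcal{N}_{(*,n)})$ is eventually linear (this is the key lemma in Trung--Wang), proved by filtration and induction arguments that never assert finite generation of local cohomology itself. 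Once you replace your ``finite generation of $H^i_\m(\mathcal{N})$'' step with this eventual-linearity statement for the $a_i$-invariants of the strands, the rest of your outline --- the upper bound on the slope from the generating degrees of $\mathcal{B}$, and the lower bound via $\alpha_S(I^nM)$ and minimality of $\rho_M(I)$, which is Kodiyalam's argument --- goes through.
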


\begin{remark}
The version of Theorem \ref{regPowers} with $R$ a polynomial ring over a field and $M=R$ was shown  idependently by Kodiyalam \cite{Kod} and Cutkosky, Herzog, and Trung \cite{CHT}. The  version presented here was shown by Trung and Wang \cite{trung05}.
\end{remark}

The following statement follows directly from Theorem \ref{regPowers}. We recall that a {\it (numerical) quasi-polynomial} on $\NN$ is a function $f:\NN\to \QQ$ such that there exists $a\in \ZZ_{>0}$ and polynomials $p_i(x)\in \QQ[x]$ for $i=0,\ldots, a-1$ such that $f(n)=p_i(n)$ if $n\equiv i \pmod{a}$. 

\begin{corollary}\label{quasipol}
Let $R$ and $M$ be as in Theorem \ref{regPowers}, and let $\I=\{I_n\}_{n\in \NN}$ be a graded family of ideals. Assume  the Rees algebra $\R(\I)$ is Noetherian, then $\reg(I^nM)$ agrees with a linear quasi-polynomial for $n\gg 0$.
\end{corollary}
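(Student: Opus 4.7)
The plan is to reduce to Theorem \ref{regPowers} by partitioning $\NN$ into residue classes modulo some $a \geq 1$ and exhibiting, on each class, the subsequence $\{I_{na+i}\}_n$ as eventually obtained from ordinary powers of the single ideal $J = I_a$.

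Since $\R(\I)$ is a Noetherian graded $R$-algebra, a standard fact about Veronese subrings of finitely generated graded algebras provides an integer $a \geq 1$ such that $\R(\I)^{(a)} = \bigoplus_{n\geq 0} I_{na}t^{na}$ is generated over $R$ by its first graded piece; equivalently, $I_{na} = I_a^n$ for every $n\geq 0$. Setting $J := I_a$, one identifies $\R(\I)^{(a)}$ with the ordinary Rees algebra $R[Jt^a]$. Next, for each $i\in\{0,1,\ldots,a-1\}$, I would consider $N_i := \bigoplus_{n\geq 0} I_{na+i}t^{na+i}$ as a graded $\R(\I)^{(a)}$-module. Since $\R(\I)$ is a finitely generated module over its Veronese $\R(\I)^{(a)}$, each $N_i$ is finitely generated. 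Thus there exists $n_i$ with $I_{(n+1)a+i} = J \cdot I_{na+i}$ for every $n\geq n_i$, which iterated gives $I_{na+i} = J^{n-n_i}\, I_{n_ia+i}$ for $n\geq n_i$.

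To conclude, I would apply Theorem \ref{regPowers} on each residue class. Put $M_i := I_{n_ia+i}M$, a finitely generated graded $R$-module. By the previous step, $I_{na+i}M = J^{n-n_i}M_i$ for $n$ sufficiently large, and Theorem \ref{regPowers} (applied to the standard graded ring $R$, the homogeneous ideal $J$, and the module $M_i$) produces constants $c_i = \rho_{M_i}(J)$ and $e_i$ with $\reg(J^m M_i) = c_im + e_i$ for $m\gg 0$. Substituting $m = n-n_i$ yields $\reg(I_{na+i}M) = c_i n + (e_i - c_i n_i)$ for $n\gg 0$, so $\reg(I_nM)$ is a linear function of $n$ on each residue class modulo $a$, i.e., a linear quasi-polynomial in the sense of the definition.

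The step requiring the most care is the first one: producing $a$ with $I_{na} = I_a^n$ for all $n$. This is a classical consequence of the finite generation of $\R(\I)$ as an $R$-algebra (choose $a$ to be a sufficiently divisible common multiple of the degrees of a homogeneous generating set of $\R(\I)$, and the identity $I_{na} = I_a^n$ follows from a pigeonhole argument on monomials in the generators). I would simply cite this rather than redo it; the rest is a clean bookkeeping exercise on top of Theorem \ref{regPowers}.
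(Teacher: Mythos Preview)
Your proof is correct and follows essentially the same route as the paper's: find $a$ so that the $a$-th Veronese of $\R(\I)$ is standard graded (i.e., $I_{na}=I_a^n$), split into residue classes modulo $a$, observe that each class gives a finitely generated module over the Veronese, and invoke Theorem~\ref{regPowers} on each class. The paper is terser---it applies Theorem~\ref{regPowers} directly to the module $M_j=\bigoplus_n I_{cn+j}M$ over the Veronese---whereas you unpack this by first extracting the identity $I_{na+i}=J^{n-n_i}I_{n_ia+i}$ and then applying the theorem with the single $R$-module $I_{n_ia+i}M$; this is the same content, just made more explicit.
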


\begin{proof}
Since $\R(\I)$ is Noetherian, threre exists $c$ such that $I_{cn}=I_c^n$  for every $n\in \NN$, i.e., the subalgebra $A=\oplus_{n\in \NN} I_{cn}\subseteq \R(\I)$ is standard graded (see e.g., \cite[Lemma 13.10]{GW} or \cite[Theorem 2.1]{herzog2007}). Moreover, each $M_j=\oplus_{n\in \ZZ}I_{cn+j}M$ for $j=0,\ldots, c-1$ is a finitely generated $A$-module. The result now follows by applying Theorem \ref{regPowers} with $I=I_c$ and $M=M_j$ for $j=0,\ldots, c-1$.
\end{proof}

We are not aware of a counterexample in the literature  to the  following question.

\begin{question}\label{QuestionReg}
Let $\I=\{I_n\}_{n\in \NN}$ be any of the graded families in Example \ref{gradedFamiliesEx}. Does the limit 
\begin{equation}\label{limitReg}
\lim_{n\to\infty} \frac{\reg(I_n)}{n}
\end{equation}
exist?
\end{question}

\begin{remark} The following facts provide some evidence that supports this question.

\begin{enumerate}
\item Question \ref{QuestionReg} was asked for symbolic powers and initial ideals of powers of homogeneous ideals in a polynomial ring $R$  by Herzog, Hoa, and Trung \cite{HHT}. In this same paper, the authors answered affirmatively   this question for initial ideals of powers of $I$ provided  $\dim(R/I)\leqslant 1$. 
Previously, Chandler had shown the limit exists for symbolic powers of ideals $I$ such that $\dim(R/I)\leqslant 2$ \cite{chandler97}.

\item Limit \eqref{limitReg}  was shown to hold for symbolic powers of  squarefree monomial ideals by Hoa and Trung \cite[Theorem 4.9]{HoaTrung}.  Since the Rees algebra of the symbolic powers of arbitrary monomial ideals is Noetherian \cite[Theorem 3.2]{herzog2007}, it follows from Corollary \ref{quasipol} that for $I$ monomial the sequence $\{\reg(I^{(n)})\}_{n\in \NN}$ eventually agrees with a  linear quasipolynomial for $n\gg 0$. However, the sequence is not always polynomial if one considers non-squarefree monomial ideals \cite[Example 3.10]{DHNT}. 

\item In characteristic zero,  using representation theory techniques Raicu  showed that for symbolic powers of generic determinantal ideals, the sequence  $\{\reg(I^{(n)})\}_{n\in \NN}$ is eventually linear and then limit \eqref{limitReg} exists. Also in characteristic zero, the same result was shown for ideals of Pfaffians by Perlman \cite{Perlman}.  We recall Raicu's result below and explain the ideas behind the proof of the analogue result in positive characteristic.

\item For integral closures of homogeneous ideals in a polynomial ring, Cutkosky, Herzog, and Trung showed that $\{\reg(\overline{I^{n }})\}_{n\in \NN}$ is eventually linear \cite[Corollary 3.5]{CHT}. More generally, if $R$ is an analytically unramified domain, the same  result holds \cite[Corollary 3.4]{trung05}.

\item Limit \eqref{limitReg}  holds for $\{\reg(\widetilde{I^{n }})\}_{n\in \NN}$,  saturations of powers of homogeneous ideals in a polynomial ring \cite[Theorem 3.2]{CEL}.
\end{enumerate}

\end{remark}


In fact, the following weaker question also seems to be open. 

\begin{question}\label{QuestionLinearReg}
Let $\I=\{I_n\}_{n\in \NN}$ be any of the graded families in Example \ref{gradedFamiliesEx}. Is the sequence $\{\reg(I_{n})\}_{n\in \NN}$
 bounded by a linear function?
\end{question}

We now describe some important features of  the new notions of {\it $F$-pure filtration} and {\it symbolic $F$-purity} \cite{de2021blowup}. The development of these notions is motivated by the  wish to prove the positive characteristic analogue of following result by Raicu.

\begin{theorem}[Raicu {\cite{Raicu}}]
Let $X=(x_{i,j})$ be an $m\times r$ matrix of variables with $m\geq r$ and let $I_t\subseteq \mathbb{C}[X]$ be the ideal  generated by the $t$-minors of $X$ for some integer $0<t \leq r$. Then for $n\geq r-1$ we have $\reg\big(I_t^{(n)}\big)=tn.$ In particular, $$\lim_{n\to\infty} \frac{\reg\big(I_t^{(n)}\big)}{n}=t.$$
\end{theorem}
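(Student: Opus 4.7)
The plan is to follow Raicu's approach of exploiting the $G := GL_m(\mathbb{C}) \times GL_r(\mathbb{C})$-symmetry of $R = \mathbb{C}[X]$, under which each symbolic power $I_t^{(n)}$ is invariant. The starting point is the Cauchy decomposition together with the classical De Concini--Eisenbud--Procesi description, which gives
$$
I_t^{(n)} = \bigoplus_{\lambda \,:\, \gamma_t(\lambda) \geq n} \mathbb{S}_\lambda \mathbb{C}^m \otimes \mathbb{S}_\lambda \mathbb{C}^r, \qquad \gamma_t(\lambda) := \sum_{i \geq 1} \max(0, \lambda_i - t + 1),
$$
the sum being over partitions $\lambda$ with at most $r$ parts; the $\lambda$-summand sits in $R$-degree $|\lambda|$. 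This identification reduces the computation of $\reg(I_t^{(n)})$ to a combinatorial problem on partitions.

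The next step is to extract the minimal graded free resolution, or equivalently the local cohomology modules, of $R/I_t^{(n)}$ in a $G$-equivariant fashion. I would apply the Kempf--Lascoux--Weyman geometric technique to a standard desingularization of the determinantal variety, namely the total space of a tautological bundle over a suitable Grassmannian, and combine it with Bott's theorem on the cohomology of line bundles on partial flag varieties. The outcome is a decomposition of each Betti number $\beta_{i,j}(R/I_t^{(n)})$, and of each $a$-invariant $a_j(R/I_t^{(n)})$, as a sum of explicit contributions indexed by dominant weights.

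With this description in hand, one has $\reg(I_t^{(n)}) = \max\{j - i : \beta_{i,j}(I_t^{(n)}) \neq 0\}$, and the task becomes to identify the maximizing pair. The lower bound $\reg(I_t^{(n)}) \geq tn$ should follow from the inclusion $I_t^n \subseteq I_t^{(n)}$ together with the fact that the ordinary power is generated in degree $tn$, after verifying that the corresponding generators survive in the minimal resolution. The upper bound $\reg(I_t^{(n)}) \leq tn$ requires showing that for every pair $(i,\lambda)$ contributing a nonzero Betti number one has $|\lambda| - i \leq tn$. The hypothesis $n \geq r-1$ is the threshold that guarantees the maximizing partition stabilizes into a shape for which this estimate is sharp.

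The main obstacle is precisely this last step: one must control simultaneously the three quantities $|\lambda|$, the homological index $i$ produced by Bott's theorem (which involves reordering the weight $\lambda + \rho$ into a dominant one and counting the required inversions), and the constraint $\gamma_t(\lambda) \geq n$. The underlying combinatorics is delicate and constitutes the substantive content of Raicu's argument; any positive-characteristic analogue via $F$-pure filtrations would need to replace this representation-theoretic bookkeeping with a Frobenius-equivariant analogue of the Kempf--Lascoux--Weyman resolution.
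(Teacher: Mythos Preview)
The paper does not contain a proof of this theorem. It is stated as a result of Raicu, cited as motivation for the paper's development of $F$-pure filtrations and symbolic $F$-purity in positive characteristic; immediately after the statement the authors write that the theorem has been extended to all characteristics in forthcoming work of Bruns, Conca, Raicu and Varbaro, and then move on to define their own notions. So there is no ``paper's own proof'' to compare against.

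Your sketch is a reasonable outline of the representation-theoretic strategy in characteristic zero: the Cauchy decomposition and the De Concini--Eisenbud--Procesi description of $I_t^{(n)}$ are correct starting points, and the Kempf--Lascoux--Weyman geometric technique together with Bott vanishing is indeed the engine Raicu uses to compute the relevant $\Ext$ modules (he works with local cohomology rather than Betti numbers directly, but the two are linked by local duality). Your description of the combinatorial difficulty --- controlling $|\lambda|$, the Bott index, and the constraint $\gamma_t(\lambda)\geq n$ simultaneously --- is accurate, and the threshold $n\geq r-1$ does arise from stabilization of the maximizing shapes. One small correction: the lower bound $\reg(I_t^{(n)})\geq tn$ is immediate from the fact that $I_t^{(n)}$ is generated in degrees $\geq tn$ (the minimal generators of $I_t^n$ map to minimal generators of $I_t^{(n)}$ in degree $tn$), so no survival-in-resolution argument is needed there.

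But to reiterate: the survey you are reading does not attempt any of this. Its contribution is the positive-characteristic machinery (symbolic $F$-purity) that yields the existence of $\lim_{n\to\infty}\reg(I_t^{(n)})/n$ without computing it, and for that the paper refers to \cite{de2021blowup}.
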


We would like to point out that this theorem has recently been extended to polynomial rings over fields of any characteristic also in an upcoming book of Bruns, Conca, Raicu and Varbaro \cite{book}.

We now include the definition of the aforementioned new notions.

\begin{definition}\label{DefFpureFilt}
Assume $R$ is  a $F$-finte and $F$-pure ring of  characteristic $p>0$. We say that a filtration $\I=\{I_n\}_{n\in \NN}$ is  {\it $F$-pure} if  there exists a splitting $\phi:R^{1/p}\to R$ such that
$\phi\big((I_{np+1})^{1/p}\big)\subseteq I_{n+1}$ for every $n\in \NN$.   An ideal $I$ is {\it symbolic $F$-pure} if the symbolic powers $\{I^{(n)}\}_{n\in \NN}$ form an $F$-pure filtration.

\end{definition}

\begin{remark}
We note that if the filtration  $\I=\{I_n\}_{n\in \NN}$ is $F$-pure, the ideal $I_1$ must be $F$-pure.
\end{remark}

In the following example we include a complete list of the classes of ideals from combinatorial commutative algebra that are known to be  symbolic $F$-pure.

\begin{example} \label{theIdeals}
Let $\KK$  be an $F$-finite field of characteristic $p>0$.   The following classes ideals are symbolic $F$-pure; for details and proofs we refer the reader to a  recent  preprint \cite{de2021blowup}. 

\begin{enumerate}
\item {\it Squarefree monomial ideals:} $I\subseteq \KK[x_1,\ldots, x_d]$ a squarefree monomial ideal. 

\item {\it Generic determinantal:} For $X=(x_{i,j})$ an $m\times r$ generic matrix of variables,    the ideal $I_t(X)\subseteq \KK[X]$ generated by the $t$-minors of $X$.

\item {\it Symmetric determinantal:} For $Y=(y_{i,j})$ an $m\times m$  generic symmetric matrix, i.e., $y_{i,j}=y_{j,i}$ for every  $1\ls i,j\ls m$,    the ideal $I_t(Y)\subseteq \KK[Y]$ generated by the $t$-minors of $Y$. 

\item {\it Pfaffians:} For $Z=(z_{i,j})$ an $m\times m$ generic skew symmetric matrix, i.e., $z_{i,j}=-z_{j,i}$ for every  $1\ls i<j\ls m$, and $z_{i,i}=0$ for every $1\ls i\ls m$, the ideal $P_{2t}(Z)\subseteq \KK[Z]$ generated by the $2t$-Pfaffians  of $Z$.

\item {\it Hankel determinantal:} Let $w_1,\ldots, w_d$ be variables. For an integer $j$ such that $1\ls j\ls d$, we denote by $W_j$ the $j\times (d+1-j) $ {\it Hankel matrix}, which has the following entries 
$$W_j=\begin{pmatrix}
w_1&w_2&\cdots&w_{d+1-j}\\
w_2&w_3&\cdots&\cdots\\
w_3&\cdots&\cdots&\cdots\\
\vdots&\vdots&\vdots&\vdots\\
w_j&\cdots&\cdots&w_d
\end{pmatrix}.$$
For $1\ls t\ls \min\{j, d+1-j\}$, the ideal $I_t(W_j)\subseteq 
\KK[w_1,\ldots, w_d]$  generated by the $t$-minors of $W_j$. The ideal  $I_t(W_j)$ only depends on $d$ and $t$, that is, $I_t(W_j)=I_t(W_t)$ for every $t\ls j\ls d+1-t$.

\item {\it Binomial edge ideal:} Let $G$ be a {\it closed} connected graph with vertex set $[r]=\{1,\ldots, r\}$, the binomial edge ideal $I_G=(x_iy_j-x_jy_i\mid \{i,j\} \text{ is an edge of } G)\subseteq \KK[x_1,\ldots, x_r,y_1,\ldots, y_r]$ provided it is equidimensional.

\item {\it Nearly commuting matrices:} Let $A$ and $B$ be $r\times r$ generic matrices in disjoint sets of variables. For $r=$ 2 or 3, the ideal $I$ generated by the   entries of $AB-BA$ and  the ideal $J$ generated by the off-diagonal entries of this matrix.
\end{enumerate}

\end{example}

Moreover, the following filtrations of monomial ideals are $F$-pure. We refer the reader to the same   preprint for more details  \cite{de2021blowup}. 

\begin{example}\label{theFiltr}
\begin{enumerate}$ $
\item {\it Ideals arising form monomial valuations:} A valuation on a polynomial ring $R=\KK[x_1,\ldots, x_d]$ is {\it monomial} if there exists a vector $\beta\in \NN^d$ such that $\nu(x^\alpha)=\alpha\cdot \beta$ for any monomial $x^\alpha$, and for any polynomial $f=\sum c_i x^{\alpha_i}$, with $c_i\in \KK$ nonzero, one has $\nu(f)=\min\{\nu(x^{\alpha_i})\}$.  If $\nu_1,
\ldots, \nu_r$ are monomial valuations and $I_n=I_n(\nu_1)\cap\cdots \cap I_n(\nu_r)$ for every $n$, the  sequence $\{I_n\}_{n\in \NN}$   is an $F$-pure filtration of monomial ideals.

Examples of filtrations arising this way include, rational powers of monomial ideal and symbolic powers of squarefee monomial ideals \cite[Example 7.3]{de2021blowup}.

\item {\it Initial ideals of symbolic powers of determinantals:} Following the notation from Example \ref{theIdeals} (2)-(5), we have $\{\IN_<(I_t(X)^{(n)})\}_{n\in \NN}$ with $p>\min\{t,r-t\}$, $\{\IN_<(P_{2t}(Z)^{(n)})\}_{n\in \NN}$ with $p>\min\{2t,r-2t\}$, and  $\{\IN_<(I_t(W_j)^{(n)})\}_{n\in \NN}$ with $p>\min\{t,r-t\}$ are all $F$-pure filtrations.
\end{enumerate}
\end{example}

The importance of these new definitions to the study of regularities can be summarized as follows. We refer to Section \ref{subLocCoh} for the definition of $a$-invariants.

\begin{theorem}[{\cite{de2021blowup}}]
Assume $R$ is as in Definition \ref{DefFpureFilt} and  that the filtration   $\I=\{I_n\}_{n\in \NN}$ is $F$-pure, then  
\begin{enumerate}
\item[$(1)$] $a_i(I_{n})\gs p^ea_i(I_{\lceil\frac{n}{p^e}\rceil})$ for every $n,\, e\in \NN$ and $0\ls i\ls \dim(R/I_1)$.  
\item[$(2)$] If $\R(\I)$ is Noetherian, the limit $\lim\limits_{n\to\infty} \frac{\reg(I_{n})}{n}$ exists.
\end{enumerate}
\end{theorem}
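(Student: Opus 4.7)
For (1), the plan is to iterate the $F$-pure splitting to exhibit $I_k$ as a graded $R$-module direct summand of $I_n^{1/p^e}$, where $k=\lceil n/p^e\rceil$. Starting from $\phi\colon R^{1/p}\to R$ with $\phi(I_{mp+1}^{1/p})\subseteq I_{m+1}$, an induction on $e$ produces an $R$-linear splitting $\phi^{[e]}\colon R^{1/p^e}\to R$ of the Frobenius inclusion $\iota_e\colon R\hookrightarrow R^{1/p^e}$ satisfying $\phi^{[e]}(I_{mp^e+1}^{1/p^e})\subseteq I_{m+1}$ for every $m\in\NN$; the inductive step specializes the defining condition at index $mp^e$ to obtain $\phi(I_{mp^{e+1}+1}^{1/p})\subseteq I_{mp^e+1}$, then applies the inductive hypothesis. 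Since $(k-1)p^e<n\leq kp^e$, the containment $I_n\subseteq I_{(k-1)p^e+1}$ shows that $\phi^{[e]}$ restricts to a map $I_n^{1/p^e}\to I_k$, while the containment $I_k^{p^e}\subseteq I_{kp^e}\subseteq I_n$ shows that $\iota_e$ restricts to a degree-preserving $R$-linear map $I_k\hookrightarrow I_n^{1/p^e}$. The two maps compose to the identity on $I_k$.

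Applying $H^i_{R_+}(-)$ and using the identification $H^i_{R_+}(I_n^{1/p^e})=H^i_{R_+}(I_n)^{1/p^e}$ (valid since $(-)^{1/p^e}$ is an exact functor) yields a degree-preserving split injection $H^i_{R_+}(I_k)\hookrightarrow H^i_{R_+}(I_n)^{1/p^e}$. Because $[H^i_{R_+}(I_n)^{1/p^e}]_{d/p^e}=[H^i_{R_+}(I_n)]_d$, any nonzero element of $[H^i_{R_+}(I_k)]_d$ produces a nonzero element of $[H^i_{R_+}(I_n)]_{dp^e}$; taking $d=a_i(I_k)$ gives the desired $a_i(I_n)\geq p^e a_i(I_k)=p^e a_i(I_{\lceil n/p^e\rceil})$, establishing (1).

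For (2), Corollary \ref{quasipol} gives that $\reg(I_n)$ agrees with a linear quasi-polynomial for $n\gg 0$, of some period $a$ and with slopes $e_0,\ldots,e_{a-1}$ on the residue classes modulo $a$; the limit exists precisely when all these slopes coincide. Fix a pair of residues $r_0,r_1$, pick $e$ with $p^e>a$, and take $m\to\infty$ along $m\equiv r_0\pmod{a}$. Choose $i_0\leq\dim(R/I_1)$ realizing $\reg(I_m)=a_{i_0}(I_m)+i_0$ (passing to a subsequence to keep $i_0$ constant; the restriction to this range is harmless because for $i$ exceeding $\dim(R/I_1)$ the invariants $a_i(I_m)$ are eventually independent of $m$, and so cannot account for the linear growth of $\reg(I_m)$). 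Since $p^e>a$, the interval $((m-1)p^e,mp^e]$ contains some $n$ with $n\equiv r_1\pmod{a}$, and then $\lceil n/p^e\rceil=m$, so part (1) gives
\begin{equation*}
\reg(I_n)\;\geq\; a_{i_0}(I_n)+i_0\;\geq\; p^e a_{i_0}(I_m)+i_0\;=\; p^e\reg(I_m)-(p^e-1)i_0.
\end{equation*}
Dividing by $n$ and letting $m\to\infty$ (so that $p^em/n\to 1$ and $\reg(I_m)/m\to e_{r_0}$) yields $e_{r_1}=\lim\reg(I_n)/n\geq e_{r_0}$. The symmetric argument gives $e_{r_0}\geq e_{r_1}$, so all slopes coincide and the desired limit exists.

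The main obstacle is the inductive step producing $\phi^{[e]}$ with the sharp containment $\phi^{[e]}(I_{mp^e+1}^{1/p^e})\subseteq I_{m+1}$: this requires combining the defining condition of the $F$-pure filtration with the exactness of $(-)^{1/p}$ and a careful tracking of indices, and it is precisely this containment that makes $I_k$ land inside $I_n^{1/p^e}$ via the degree-preserving $\iota_e$. A secondary subtlety, in (2), is verifying that $i_0$ may be chosen within the hypothesized range $[0,\dim(R/I_1)]$; this is exactly why the $i$-range is built into the statement of (1), and it relies on bounded behavior of $a_i(I_m)$ for $i$ above this threshold.
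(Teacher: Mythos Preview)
The paper does not prove this theorem; it merely records the statement and cites \cite{de2021blowup}. So there is no ``paper's own proof'' to compare against, and I evaluate your argument on its own.

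Your proof of (1) is correct. In fact it proves more than the stated range: the direct-summand argument $I_k \hookrightarrow I_n^{1/p^e}$ (split by the restriction of $\phi^{[e]}$) is valid as a map of graded $R$-modules with no hypothesis on $i$, so applying $H^i_{R_+}(-)$ gives $a_i(I_k) \leq a_i(I_n^{1/p^e}) = a_i(I_n)/p^e$ for \emph{every} $i \geq 0$. The restriction $0 \leq i \leq \dim(R/I_1)$ in the statement is therefore not needed for your approach.

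This observation dissolves the ``secondary subtlety'' you flag in (2). Since your inequality in (1) holds for all $i$, you may take $i_0$ to be whichever index realizes $\reg(I_m) = a_{i_0}(I_m) + i_0$, with no need to confine it to $[0,\dim(R/I_1)]$. Your justification for that confinement (``for $i$ exceeding $\dim(R/I_1)$ the invariants $a_i(I_m)$ are eventually independent of $m$'') is in fact not clearly true under the stated hypotheses: it would require something like $\sqrt{I_m} = \sqrt{I_1}$ for all $m$, which is not assumed. Fortunately you do not need it. With that adjustment, the remainder of your argument for (2)---reducing via Corollary~\ref{quasipol} to equality of the quasi-polynomial slopes, then comparing slopes via (1) along residue classes---goes through cleanly.
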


We obtain  the following corollary. 

\begin{corollary}[{\cite{de2021blowup}}]
Assume $R$ is as in Definition \ref{DefFpureFilt}. If $I$ is as in Example \ref{theIdeals} (1)-(5), the limit 
$$\lim_{n\to\infty} \frac{\reg\big(I^{(n)}\big)}{n}$$
exists.
\end{corollary}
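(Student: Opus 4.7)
The plan is to apply part (2) of the theorem immediately preceding the corollary to the filtration of symbolic powers $\mathcal{I}=\{I^{(n)}\}_{n\in\NN}$. For this, two hypotheses must be verified: that $\mathcal{I}$ is an $F$-pure filtration in the sense of Definition \ref{DefFpureFilt}, and that the symbolic Rees algebra $\R(\mathcal{I})=\bigoplus_{n\geq 0} I^{(n)} t^n$ is Noetherian.

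The first hypothesis is exactly the statement that $I$ is symbolic $F$-pure, and this is precisely what Example \ref{theIdeals} records for the five classes (1)--(5): squarefree monomials, generic determinantal $I_t(X)$, symmetric determinantal $I_t(Y)$, Pfaffian $P_{2t}(Z)$, and Hankel determinantal $I_t(W_j)$. So for these classes, the first step is citation rather than computation: one invokes Example \ref{theIdeals} to produce, for each $e\geq 0$, a Frobenius splitting $\phi\colon R^{1/p}\to R$ with $\phi\bigl((I^{(np+1)})^{1/p}\bigr)\subseteq I^{(n+1)}$.

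The second hypothesis requires knowing that the symbolic Rees algebra is finitely generated. This is the classical content for each of these families:
\begin{enumerate}
\item For a squarefree monomial ideal, $\R(\mathcal{I})$ is finitely generated because the symbolic powers admit an explicit description as finite intersections of symbolic powers of monomial primes, which are themselves powers.
\item For the generic, symmetric, and Pfaffian cases, finite generation of the symbolic Rees algebra goes back to work of de Concini--Eisenbud--Procesi and Bruns; the symbolic powers are generated by products of minors (respectively Pfaffians) of the appropriate shapes.
\item For Hankel determinantal ideals, finite generation is likewise known through the reduction $I_t(W_j)=I_t(W_t)$ together with standard bitableau arguments.
\end{enumerate}
Once both hypotheses are in hand, part (2) of the preceding theorem directly yields the existence of $\lim_{n\to\infty}\reg(I^{(n)})/n$, completing the proof.

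The main potential obstacle is the second step: marshaling a clean, uniform reference for Noetherianity of the symbolic Rees algebra across all five classes. The $F$-pure filtration property has been packaged conveniently by Example \ref{theIdeals}, but Noetherianity of $\R(\mathcal{I})$ is a separate classical ingredient which, although known, must be verified family by family. If one wished to be self-contained, the cleanest route would be to observe in each case that the symbolic powers coincide with (or are contained between fixed multiples of) a product-type combinatorial family (intersections of monomial-valuation ideals in the squarefree case, products of minors/Pfaffians in the determinantal cases) and then invoke the Rees-algebra version of standard monomial theory. No further appeal to Frobenius is needed beyond the application of the prior theorem.
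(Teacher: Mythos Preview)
Your approach is correct and matches the paper's implicit argument: the corollary is an immediate consequence of part (2) of the preceding theorem, once one knows that the symbolic power filtration is $F$-pure (Example \ref{theIdeals}) and that the symbolic Rees algebra is Noetherian for each of the five classes. The paper does not spell out a proof, simply citing \cite{de2021blowup}, but your identification of the two required ingredients and the observation that Noetherianity must be checked separately, family by family, is exactly the content behind the deduction.
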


The $F$-purity of a filtration $\I=\{I_n\}_{n\in \NN}$ implies that the Rees algebra $\R(\I)$ and {\it associated graded algebra} $\gr(\I)=\oplus_{n\in\NN}I_n/I_{n+1}$ have nice singularities. Indeed, if  $\I=\{I_n\}_{n\in \NN}$ is $F$-pure then $\R(\I)$ and $\gr(\I)$ are $F$-pure \cite[Theorem 4.7]{de2021blowup}. For determinantal ideals Example \ref{theIdeals} (2)-(5) better results are available: in some cases these algebras are strongly $F$-regular and the  Rees-algebra $\R(I)=\oplus_{n\in \NN} I^nt^n$ is $F$-pure \cite[Section 6]{de2021blowup}.

For filtrations of monomial ideals in arbitrary characteristic, we can use Frobenius-like maps to obtain similar conclusions for the regularity of these filtrations \cite{lewis,MNB}. We  now include a slightly more general version of the known results and show some applications to the study of regularities.  In the following results we do not assume $\KK$ is necessarily of positive characteristic and we use the following notation. 

Let $R=\KK[x_1,\ldots, x_d]$ be a polynomial ring over an arbitrary field $\KK$. A valuation  $\nu$ on $R$ is {\it monomial} if $\nu(\sum_\alpha \lambda_\alpha x^\alpha)=
\min\{\nu(x^\alpha)\mid \lambda_\alpha\neq 0\}$. In this case, there exists $w\in 
\QQ^d_{\gs 0}$ such that $\nu(x^\alpha)=w\cdot \alpha$. We say that the monomial valuation $\nu$ is {\it normalized} if $w\in \ZZ_{\gs 0}^d$. We note that if $\nu$ is a monomial valuation, the ideals $I_n(\nu)$ for $n\in \ZZ$ are all monomial.

Let $m\in\ZZ_{>0}$ and  set  $R^{1/m}=k[x^{1/m}_1,\ldots, x^{1/m}_d]$. We note that $R\subseteq R^{1/m}$. We denote by $I^{1/m}$ the ideal of $R^{1/m}$ generated by $\{f^{1/m}\mid f\in  I \hbox{ monomial} \} $.

\begin{definition}\label{defSplitting}
For $m\in \ZZ_{>0}$, we define  the $R$-homomorphism  $\Phi^R_m:R^{1/m}\to R$ induced by 
$$
\Phi^R_m(x^\alpha)=
\begin{cases} 
     x^{\alpha/m} & \alpha \equiv 0\, (\bmod\, m);\\
      0 & \hbox{otherwise.}
\end{cases}.
$$
\end{definition}

\noindent We note that $\Phi^R_m$ is a splitting and thus  $R$ is a direct summand of $R^{1/m}$. 

\begin{lemma}\label{lemmaMonVal}
Let $\underline{\nu}=\nu_1, \cdots, \nu_r$ be normalized monomial valuations on $R$. For each $n\in \NN$ we define the monomial ideal 
$$I_n(\underline{\nu})=I_n(\nu_1)\cap\cdots \cap I_n(\nu_r)=\{f\in R \mid \nu_i(f)\gs n\text{, for }1\ls i\ls r\}.$$
 Then $$\Phi_m^R\big(( I_{n m+j}(\underline{\nu}))^{1/m}\big)= I_{n+1}(\underline{\nu})$$ for every $n\in \ZZ_{\gs 0}$, $m\in \ZZ_{>0}$, and $1\ls j\ls m$.
 \end{lemma}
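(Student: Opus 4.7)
The plan is to reduce the claim to a monomial-by-monomial computation, since $\Phi_m^R$ is $R$-linear and $I_{nm+j}(\underline\nu)$ is a monomial ideal: both sides of the asserted equality are monomial ideals in $R$, so it will suffice to match their monomial generators. I will (i) describe the monomials of $(I_{nm+j}(\underline\nu))^{1/m}$ explicitly, (ii) apply the definition of $\Phi_m^R$ monomial-by-monomial, and (iii) translate the resulting divisibility condition into membership in $I_{n+1}(\underline\nu)$.

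For each $i$, normalization gives a weight $w_i \in \ZZ_{\gs 0}^d$ with $\nu_i(x^\alpha) = w_i \cdot \alpha$ for $\alpha \in \NN^d$. First I would show that, for $\delta \in \NN^d$, the monomial $x^{\delta/m} \in R^{1/m}$ lies in $(I_{nm+j}(\underline\nu))^{1/m}$ if and only if $w_i \cdot \delta \gs nm+j$ for every $i$. The reverse direction is immediate by taking $\alpha=\delta$; for the forward direction, such an $x^{\delta/m}$ must be an $R^{1/m}$-multiple of some generator $x^{\alpha/m}$ with $\alpha \in \NN^d$ and $w_i \cdot \alpha \gs nm+j$, so $\delta - \alpha \in \NN^d$ componentwise and nonnegativity of $w_i$ yields $w_i \cdot \delta \gs w_i \cdot \alpha \gs nm+j$.

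Next I would apply $\Phi_m^R$: by its definition, the image $\Phi_m^R(x^{\delta/m})$ vanishes unless every entry of $\delta$ is divisible by $m$, in which case $\delta = m\beta$ for some $\beta \in \NN^d$ and $\Phi_m^R(x^{\delta/m}) = x^\beta$. Combining with the previous step, $\Phi_m^R\big((I_{nm+j}(\underline\nu))^{1/m}\big)$ is the monomial ideal of $R$ generated by those $x^\beta$ with $\beta \in \NN^d$ satisfying $m(w_i \cdot \beta) \gs nm+j$, equivalently $w_i \cdot \beta \gs n + j/m$, for every $i$.

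The final step is a rounding argument: since $w_i \in \ZZ_{\gs 0}^d$ and $\beta \in \NN^d$, the quantity $w_i \cdot \beta$ is an integer, and the hypothesis $1 \ls j \ls m$ gives $0 < j/m \ls 1$. Hence $w_i \cdot \beta \gs n + j/m$ is equivalent to $w_i \cdot \beta \gs n+1$, which says exactly $x^\beta \in I_{n+1}(\underline\nu)$. This yields $\Phi_m^R\big((I_{nm+j}(\underline\nu))^{1/m}\big) \subseteq I_{n+1}(\underline\nu)$. For the reverse inclusion, any monomial $x^\beta \in I_{n+1}(\underline\nu)$ satisfies $w_i \cdot (m\beta) \gs m(n+1) \gs nm+j$, so $x^\beta = x^{m\beta/m}$ lies in $(I_{nm+j}(\underline\nu))^{1/m}$ and is fixed by $\Phi_m^R$. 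There is no real obstacle in this argument; the only delicate point is the interplay between $1 \ls j \ls m$ and the integrality provided by normalization, which is precisely what makes the ceiling collapse cleanly to $n+1$.
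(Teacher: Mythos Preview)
Your proof is correct and follows essentially the same approach as the paper's. The only cosmetic difference is that the paper verifies the containment $\Phi_m^R\big((I_{nm+j}(\underline\nu))^{1/m}\big)\subseteq I_{n+1}(\underline\nu)$ one valuation at a time (showing $\Phi_m^R\big((I_{nm+j}(\nu_i))^{1/m}\big)\subseteq I_{n+1}(\nu_i)$ for each $i$ and then intersecting), whereas you handle all $\nu_i$ simultaneously; the underlying rounding argument via integrality of $w_i\cdot\beta$ and the use of the splitting property for the reverse inclusion are identical.
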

 \begin{proof}
Fix $m\in \ZZ_{0}$. For each $1\ls i \ls r$ and $n\in \NN$ we have $I_n(\nu_i)\subseteq (I_{nm}(\nu_i))^{1/m}$ and thus 
$$I_{n+1}(\underline{\nu})=I_{n+1}(\nu_1)\cap\cdots \cap I_{n+1}(\nu_r)
\subseteq 
(I_{(n+1)m}(\nu_1)\cap\cdots \cap I_{(n+1)m}(\nu_r))^{1/m}=(I_{(n+1)m}(\underline{\nu}))^{1/m}.
$$
It follows that  $I_{n+1}\subseteq \Phi_m^R\big(( I_{n m+j})^{1/m}\big)$ for every $1\ls j\ls m$.

Fix $1\ls i\ls r$ and $1\ls j\ls m$. 
 We note  that 
 $( I_{nm+j}(\nu_i))^{1/m}$ is spanned as  a $\KK$-vector space by 
 $\{ ( x^\alpha)^{1/m}\mid \nu_i(x^\alpha)\gs nm+j \}. 
 $ 
Let $(x^\alpha)^{1/m} \in ( I_{nm+j}(\nu_i))^{1/m}$ such that $\Phi_m^R( (x^\alpha)^{1/m})\neq 0$, then
$\alpha/m\in \ZZ_{\gs 0}^d$ and  $\nu_i(x^\alpha)\gs nm+j $. 
Since
$\nu_i(x^{\alpha/m})\gs n+\frac{j}{m}$ and $\nu_i$ is normalized, we must have $ \nu_i(x^{\alpha/m})\gs n+1$ and then  $
\Phi_m^R((x^\alpha)^{1/m} )\in I_{n+1}(\nu_i)$ as we wanted to show. Finally, we have
\begin{align*}
\Phi_m^R\big( ( I_{n m+j}(\underline{\nu}))^{1/m}\big) &\subseteq \Phi_m^R\big( (  I_{nm+j}(\nu_1))^{1/m} \big)\cap \cdots\cap \Phi_m^R\big((  I_{nm+j}(\nu_r))^{1/m} \big)\\
&\subseteq  I_{n+1}(\nu_1)\cap\cdots\cap I_{n+1}(\nu_r)=I_{n +1}(\underline{\nu}).
\end{align*}
 \end{proof}
 
 \begin{example}\label{monFilt}
 We note that well-studied filtrations of monomial ideals appear as $\{I_n(\underline{\nu})\}_{n\in \NN}$ as in Lemma \ref{lemmaMonVal}. Examples of these are:
 \begin{enumerate}
 \item {\it Symbolic powers  of squarefree monomial ideals}. In this case, the valuations $\nu_i$ correspond to  the minimal primes of the squarefree monomial ideal $I$. That is, if $I=Q_1\cap \cdots \cap Q_r$ is the prime decomposition of $I$, then $\nu_i(f)=\max\{n
\in \NN\mid f\in Q_i^n\}$ for $1\ls i\ls r$.
\item {\it Rational powers of monomial ideals}. In this case, the valuations $\nu_i$ are suitable multiples of the Rees valuations of the given monomial ideals \cite[Proposition 4.4]{lewis}.
 \end{enumerate}
 \end{example}

We are now ready to present the result on the regularities of the filtrations $\{I_n(\underline{\nu})\}_{n\in \NN}$.

\begin{theorem}
Let $R=\KK[x_1,\ldots, x_d]$ be a polynomial ring over an arbitrary field $\KK$. Let $\underline{\nu}=\nu_1,  \cdots, \nu_r$ be normalized monomial valuations on $R$ and for each $n\in \NN$ let   $I_n(\underline{\nu})$ be  as  in Lemma \ref{lemmaMonVal}. Then, 
\begin{enumerate}
\item[$(1)$] $a_i(I_{n}(\underline{\nu}))\gs m a_i(I_{\lceil\frac{n}{m}\rceil}(\underline{\nu}) )$ for every $n,m \in \ZZ_{>0}$  and $0\ls i\ls \dim(R/I_1(\underline{\nu}))$.  
\item[$(2)$] 
The limit $\lim\limits_{n\to\infty} \frac{\reg(I_{n}(\underline{\nu}))}{n}$ exists.
\end{enumerate}
In particular, this result holds for the ideals in Example \ref{monFilt}.
\end{theorem}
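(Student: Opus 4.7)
The plan is to derive (1) from the splitting property of $\Phi_m^R$ combined with Lemma \ref{lemmaMonVal}, and then obtain (2) from (1) together with Noetherianness of the Rees algebra $\R(\I)$. Throughout I abbreviate $I_n := I_n(\underline{\nu})$.

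For part (1), I would show that $I_{\lceil n/m \rceil}$ is a graded $R$-direct summand of $(I_n)^{1/m}$. For any monomial $x^\beta \in I_{\lceil n/m \rceil}$, we have $\nu_i(x^\beta) \geq \lceil n/m \rceil \geq n/m$ for every $i$, so $x^\beta \in (I_n)^{1/m}$; this gives an inclusion of graded $R$-modules with respect to the $\frac{1}{m}\NN$-grading on $R^{1/m}$, which restricts to the original $\NN$-grading on $R$. Since $\Phi_m^R$ is $R$-linear, restricts to the identity on $R$, and by Lemma \ref{lemmaMonVal} sends $(I_n)^{1/m}$ onto $I_{\lceil n/m \rceil}$, it splits this inclusion. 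Applying $H^i_{R_+}(-)$ yields $a_i((I_n)^{1/m}) \geq a_i(I_{\lceil n/m \rceil})$. To identify the left-hand side, I would use the $\KK$-algebra isomorphism $\rho\colon R \to R^{1/m}$ given by $x_j \mapsto x_j^{1/m}$: under $\rho$, the ideal $I_n$ corresponds to $(I_n)^{1/m}$ (same monomial generators, reinterpreted), while the standard $\NN$-grading on $R$ is scaled by $1/m$. Since $\sqrt{R_+ R^{1/m}} = (R^{1/m})_+$, this identification gives $a_i((I_n)^{1/m}) = a_i(I_n)/m$, and combining with the previous inequality produces $a_i(I_n) \geq m\, a_i(I_{\lceil n/m \rceil})$.

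For part (2), I would first note that $\R(\I) = \bigoplus_n I_n t^n$ is Noetherian: writing $\nu_i(x^\beta) = w_i \cdot \beta$ with $w_i \in \ZZ_{\geq 0}^d$, it is the semigroup algebra of the rational polyhedral monoid $\{(\beta, n) \in \NN^{d+1} : w_i \cdot \beta \geq n \text{ for all } i\}$, which is finitely generated by Gordan's lemma. Corollary \ref{quasipol} then yields $\reg(I_n) \leq Cn + D$ for some constants, so in particular each sequence $a_i(I_n)/n$ is bounded above. Using (1), I would show that for each $0 \leq i \leq d$ the limit $L_i := \lim_n a_i(I_n)/n$ exists and equals $\sup_k a_i(I_k)/k$: for fixed $k$ and $n \gg 0$, choose $m = \lceil n/k \rceil$ so that $\lceil n/m \rceil = k$; then (1) gives $a_i(I_n)/n \geq (m/n)\, a_i(I_k)$, and since $m/n \to 1/k$ as $n \to \infty$, $\liminf_n a_i(I_n)/n \geq a_i(I_k)/k$. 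Taking $\sup_k$ gives $\liminf \geq \sup \geq \limsup$, so $L_i$ exists, and by the upper bound $L_i$ is finite. Finally, since $\reg(I_n)/n = \max_{0 \leq i \leq d}(a_i(I_n)/n + i/n)$ is a maximum over finitely many indices, continuity of $\max$ gives $\lim_n \reg(I_n)/n = \max_i L_i$.

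The main obstacle lies in part (1), specifically the careful bookkeeping of two distinct grading identifications on $R^{1/m}$: its inherited $\frac{1}{m}\NN$-grading as a graded $R$-module (under which the splitting $\Phi_m^R$ and the inclusion $I_{\lceil n/m \rceil} \hookrightarrow (I_n)^{1/m}$ are degree-preserving), versus the grading transported from $R$ via the abstract ring isomorphism $\rho$ (which scales degrees by $1/m$). Reconciling these two viewpoints is precisely what produces the factor of $m$ in the key inequality, and ultimately in the limit formula for part (2).
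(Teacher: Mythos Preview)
Your proposal is correct and follows essentially the same strategy the paper intends: part (1) via the graded splitting $\Phi_m^R$ of Lemma~\ref{lemmaMonVal} together with the degree-rescaling identification $a_i((I_n)^{1/m})=a_i(I_n)/m$, and part (2) via Noetherianness of $\R(\I)$ combined with a Fekete-type argument from (1). The paper's own proof simply defers both steps to \cite[Theorem 3.4]{MNB} and to the Noetherianity references \cite[Corollary 9.2.1]{HSIC}, \cite{herzog2007}, whereas you spell out the direct-summand argument, the Gordan's-lemma justification of Noetherianity, and the $\liminf \geq \sup_k a_i(I_k)/k \geq \limsup$ computation explicitly; these are precisely the details underlying those citations.
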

 \begin{proof}
 The proof of (1) follows as the one for symbolic powers of squarefree monomial ideals \cite[Theorem 3.4]{MNB}. For part (2), we note that if  $\I=\{I_n(\underline{\nu})\}_{n\in \NN}$, then $\R(\I)$ is Noetherian \cite[Corollary 9.2.1]{HSIC}, \cite[Theorem 1.1, Corollary 1.2]{herzog2007}.
 \end{proof}

\subsection{Depths} In this subsection $(R,\m, \KK)$ is a Noetherian that is either local, or an $\NN$-graded ring over a local ring  $(R_0,\m_0)$ with $\m=\m_0 \oplus_{n>0} R_n$. 

We now turn our attention to the sequence of depths $\{\depth(R/I_n)\}_{n\in \NN}$ of a graded family of ideals. The first result in this topic is the celebrated result by Brodmann \cite{BrodmannDepth}, who showed that the sequence of depth stabilizes for the regular powers of an ideal $I$. We include below the statement of this theorem. We recall that the {\it analytic spread} of $I$, denoted by $\ell(I)$, is the Krull dimension of the graded algebra $\mathcal{F}(I)=\oplus_{n\in \NN}I^n/\m I^n$.

\begin{theorem}[{Brodmann \cite{BrodmannDepth}}]\label{depthPowers}
Let  $I\subseteq R$ be an ideal which is homogeneous in the graded case. Then the limit $\lim_{n\to \infty} \depth(R/I_n)$ exists. Moreover, 
$$\lim_{n\to \infty} \depth(R/I_n)=\min\{\depth(R/I_n)\}\ls \dim(R)-\ell(I).$$
\end{theorem}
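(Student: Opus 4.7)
The plan is to prove the two assertions separately: existence of the limit (and its equality with $\min_n \depth(R/I^n)$) via stability of associated primes, and the bound by $\dim R-\ell(I)$ via Burch's inequality. We may assume $(R,\m,\KK)$ is local, since in the $\NN$-graded setting the statement localizes at $\m$.

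For the existence half, the key input is that $A^*(I):=\bigcup_{n\gs 1}\Ass_R(R/I^n)$ is finite and the assignment $n\mapsto \Ass_R(R/I^n)$ is eventually constant. This is itself a theorem of Brodmann, obtained by identifying $\Ass_R(R/I^n)$ with the associated primes of the degree-$n$ component of a finitely generated graded module over $\gr_I(R)=\bigoplus_{n\gs 0}I^n/I^{n+1}$ and then invoking Noetherianity. With this in hand, I would use the short exact sequences
\[
0\to I^n/I^{n+1}\to R/I^{n+1}\to R/I^n\to 0
\]
and the induced long exact sequences in local cohomology $H^i_\m(-)$ to track the smallest $i$ with $H^i_\m(R/I^n)\neq 0$; once $\Ass$ has stabilized, the connecting maps are controlled well enough to force $\depth(R/I^n)$ to be eventually constant. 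Because the stable value is attained for all sufficiently large $n$, it coincides with $\min_n\depth(R/I^n)$.

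For the bound $\lim\depth(R/I^n)\ls \dim R-\ell(I)$, recall $\ell(I)=\dim\cF(I)$ for $\cF(I)=\R(I)/\m\R(I)$ the special fiber ring. The strategy is to produce a prime $\mathfrak{p}\in\Ass_R(R/I^n)$ for some $n\gg 0$ with $\dim(R/\mathfrak{p})\ls \dim R-\ell(I)$, after which the standard inequality $\depth(M)\ls \dim(R/\mathfrak{p})$ for $\mathfrak{p}\in\Ass(M)$ finishes the argument and passes to the limit. To obtain $\mathfrak{p}$, I would select a minimal prime $\mathfrak{P}$ of $\m\R(I)$ in $\R(I)$ whose image in $\cF(I)$ has dimension $\ell(I)$, and then exploit the correspondence between associated primes of graded $\R(I)$-ideals and associated primes of $R/I^n$ in fixed degree. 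The main obstacle is exactly this last construction---translating a minimal prime of the special fiber into a high-codimension associated prime of $R/I^n$: the naive attempt of contracting $\mathfrak{P}\cap R$ returns only $\m$, so one must instead invoke Burch's primary-decomposition argument within the graded structure of $\R(I)$ to extract the correct prime.
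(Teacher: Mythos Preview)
The paper does not supply its own proof of this theorem; it is simply quoted as Brodmann's result. So there is no ``paper's proof'' to compare against, and your proposal must be judged on its own merits.

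The existence argument you sketch (stability of $\Ass_R(R/I^n)$ together with the short exact sequences $0\to I^n/I^{n+1}\to R/I^{n+1}\to R/I^n\to 0$) is indeed the classical route and is fine as an outline, though the step ``once $\Ass$ has stabilized, the connecting maps are controlled well enough to force $\depth(R/I^n)$ to be eventually constant'' hides the actual work: one really uses that $\Ass_R(I^n/I^{n+1})$ stabilizes (not just $\Ass_R(R/I^n)$), so that $\depth(I^n/I^{n+1})$ is eventually constant, and then the depth lemma on the short exact sequence pins down $\depth(R/I^{n+1})$ in terms of $\depth(R/I^n)$ and this constant.

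There is, however, a genuine gap in your deduction of $\lim_n\depth(R/I^n)=\min_n\depth(R/I^n)$. You write: ``Because the stable value is attained for all sufficiently large $n$, it coincides with $\min_n\depth(R/I^n)$.'' This is a non sequitur: eventual constancy says nothing about whether the constant value is the minimum over all $n$. In fact this equality is \emph{not} part of Brodmann's theorem and is false in general; Herzog and Hibi showed that any eventually constant nonnegative integer-valued function arises as $n\mapsto\depth(R/I^n)$ for a suitable monomial ideal, so the stable value can strictly exceed the minimum. The paper's formulation here appears to have overstated Brodmann's result (perhaps by analogy with the $F$-pure filtration results later in the section, where the equality \emph{does} hold). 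You should not try to prove that equality.

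For the inequality $\lim_n\depth(R/I^n)\le\dim R-\ell(I)$, your idea of producing a stable associated prime of the right coheight is correct in spirit, and you are right that simply contracting a minimal prime of $\m\mathcal{R}(I)$ to $R$ does not work. The standard argument instead passes through $\gr_I(R)$: for $n\gg 0$ one has $\depth(R/I^n)=\depth(I^n/I^{n+1})$, and a prime of maximal dimension in $\mathcal{F}(I)$ lifts to a minimal prime of $\m\gr_I(R)$, which in turn yields an associated prime of some $I^n/I^{n+1}$ of coheight at least $\ell(I)$.
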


Theorem \ref{depthPowers} generated a new line of research, where several authors have studied under which conditions the limit of depths of graded families exists.  We ask the  following question. In Remark \ref{remDepths} we include details of what is known about this question. 

\begin{question}\label{QuestionDepth}
Let $\I=\{I_n\}_{n\in \NN}$ be any of the graded families in Example \ref{gradedFamiliesEx}. When does the limit 
\begin{equation}\label{limitDepth}
\lim_{n\to\infty} \depth(R/I_n)
\end{equation}
exist?
\end{question}

\begin{remark}\label{remDepths}  We include some comments on  Question \ref{QuestionDepth}.

\begin{enumerate}
\item Assume $R$ is analytically unramified. Since the Rees algebra of the filtration of integral closures $\{\overline{I^n}\}_{n\in \NN}$ is module-finite over $\R(I)=\oplus_{n\in \NN}I^nt^n$ \cite[Corollary 9.21]{HSIC}, it follows that the limit $\lim_{n\to \infty}\depth(I^n)$  exists \cite[Theorem 1.1]{herzog2005depth}.
\item The existence of limit \ref{limitDepth} for  symbolic powers of squarefree monomial ideals follows from methods due to Hoa and Trung \cite{HoaTrung}. A slightly more general version was shown by Nguyen and Trung \cite[Thoerem 3.3]{NgTrung}.

\item  Given a graded family $\I=\{I_n\}_{n\in \NN}$, we note that if $\R(I)$ is Noetherian, then the sequence $\{\depth(I_n)\}_{n\in \NN}$ is eventually periodic \cite[Theorem 1.1]{herzog2005depth}. 

\item A recent remarkable result of Nguyen and Trung shows that for any sequence $\{a_n\}_{n\gs 1}$ that is periodic for $n\gg 0$, there exists a homogeneous  ideal $I$ such that $\depth R/I^{(n)}=a_n$ for every $n\gs 1$ \cite{NgTrung}.  In particular,  limit \eqref{limitDepth} may not exist.
. 

\end{enumerate}
\end{remark}

The following result is our main contribution to Question \ref{QuestionDepth}.

\begin{theorem}{\cite{de2021blowup}}
Assume $R$ is  $F$-finte and $F$-pure ring of  characteristic $p>0$ and  that the filtration   $\I=\{I_n\}_{n\in \NN}$ is $F$-pure, then  
\begin{enumerate}
\item[$(1)$] $\depth(I_{n})\ls \depth(I_{\lceil\frac{n}{p^e}\rceil})$ for every $n,\, e\in \NN$.  
\item[$(2)$] If $\R(\I)$ is Noetherian, the limit $\lim\limits_{n\to\infty} \depth(I_n)$ exists and is equal to $\min\{\depth(I_{n})\}$.
\end{enumerate}
\end{theorem}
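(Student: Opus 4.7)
The plan is to use the single splitting $\phi\colon R^{1/p}\to R$ guaranteed by the $F$-purity of $\I$ to exhibit, for each $m\in\NN$ and $e\in\NN$, the quotient $R/I_{\lceil m/p^e\rceil}$ as an $R$-module direct summand of the Frobenius twist $(R/I_m)^{1/p^e}$. Part $(1)$ will then follow because depth of a direct summand is at least that of the ambient module, and the Frobenius twist preserves depth. For part $(2)$, I will combine $(1)$ with the fact that Noetherianity of $\R(\I)$ forces $\{\depth(R/I_n)\}$ to be eventually periodic \cite[Theorem 1.1]{herzog2005depth}, and propagate the minimum value through the periodicity.

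For part $(1)$, first rewrite the defining relation in the uniform form $\phi((I_m)^{1/p})\subseteq I_{\lceil m/p\rceil}$ for every $m\geq 1$: writing $m=np+r$ with $0\leq r<p$, if $r\geq 1$ then $I_m\subseteq I_{np+1}$ and $\lceil m/p\rceil=n+1$, whereas if $r=0$ then $I_m=I_{np}\subseteq I_{(n-1)p+1}$ and $\lceil m/p\rceil=n$. Iterating, the map $\phi^{(e)}:=\phi\circ\phi^{1/p}\circ\cdots\circ\phi^{1/p^{e-1}}\colon R^{1/p^e}\to R$ is an $R$-linear splitting of $\iota_e\colon R\hookrightarrow R^{1/p^e}$, and induction on $e$ using $\lceil\lceil m/p\rceil/p\rceil=\lceil m/p^2\rceil$ yields $\phi^{(e)}((I_m)^{1/p^e})\subseteq I_{\lceil m/p^e\rceil}$. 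Since $\I$ is a graded family, $I_{\lceil m/p^e\rceil}^{[p^e]}\subseteq I_{\lceil m/p^e\rceil}^{p^e}\subseteq I_{p^e\lceil m/p^e\rceil}\subseteq I_m$, so $\iota_e$ induces a natural $R$-linear map $R/I_{\lceil m/p^e\rceil}\to (R/I_m)^{1/p^e}$, which is split by the map induced by $\phi^{(e)}$. Because depth is preserved by direct summands, and $\depth_R(M^{1/p^e})=\depth_R(M)$ for any finitely generated $R$-module $M$ (an element $f\in R$ is a nonzerodivisor on $M^{1/p^e}$ iff $f^{p^e}$ is on $M$, iff $f$ is, with the analogous statement extending to regular sequences), we conclude
$$\depth(R/I_{\lceil m/p^e\rceil})\geq\depth\bigl((R/I_m)^{1/p^e}\bigr)=\depth(R/I_m),$$
which is $(1)$.

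For part $(2)$, let $a=\min_n\depth(R/I_n)$, attained at some $n_0$. Applying $(1)$ to any $n$ in the interval $((n_0-1)p^e,n_0p^e]$ gives $\lceil n/p^e\rceil=n_0$ and so $\depth(R/I_n)\leq\depth(R/I_{n_0})=a$; by minimality, $\depth(R/I_n)=a$ on this whole interval. By Noetherianity of $\R(\I)$ and \cite[Theorem 1.1]{herzog2005depth}, the sequence $\{\depth(R/I_n)\}$ is eventually periodic of some period $\pi$ past some threshold $N_0$. Choose $e$ with $p^e\geq\pi$ and $(n_0-1)p^e\geq N_0$; then the above interval has length at least $\pi$, lies in the periodic range, and hence meets every residue class modulo $\pi$. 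Eventual periodicity then propagates $\depth(R/I_n)=a$ to all $n\gg0$, yielding both the existence of the limit and the asserted value $\min_n\depth(R/I_n)$.

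The main obstacle is the book-keeping needed to make the depth-invariance $\depth_RM=\depth_RM^{1/p^e}$ fully precise in the graded situation, where $M^{1/p^e}$ carries a $\tfrac{1}{p^e}\ZZ$-grading and $F$-finiteness is required so that $M^{1/p^e}$ stays a finitely generated $R$-module. The other point requiring care is passing from the single-step $F$-purity $\phi((I_{np+1})^{1/p})\subseteq I_{n+1}$ to the iterated statement $\phi^{(e)}((I_m)^{1/p^e})\subseteq I_{\lceil m/p^e\rceil}$; the uniform reformulation $\phi((I_m)^{1/p})\subseteq I_{\lceil m/p\rceil}$ is what makes this iteration go through cleanly.
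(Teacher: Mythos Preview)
The paper does not supply its own proof of this theorem; it simply cites \cite{de2021blowup}, and for the parallel monomial--valuation version remarks that the argument runs as in \cite[Proposition~4.9 and Theorem~4.10]{de2021blowup}. Your proof is correct and follows precisely that template: iterate the splitting to get $\phi^{(e)}\bigl((I_m)^{1/p^e}\bigr)\subseteq I_{\lceil m/p^e\rceil}$, use $I_{\lceil m/p^e\rceil}^{[p^e]}\subseteq I_m$ to make $R/I_{\lceil m/p^e\rceil}$ an $R$-module direct summand of $(R/I_m)^{1/p^e}$, and conclude via the behaviour of depth under summands and Frobenius twists; then for part~(2) feed the inequality into the eventual periodicity coming from Noetherianity of $\R(\I)$.

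One small bookkeeping slip in part~(2): your condition $(n_0-1)p^e\geq N_0$ cannot be satisfied when $n_0=1$ and $N_0>0$. You do not actually need the whole interval $((n_0-1)p^e,\,n_0p^e]$ to lie in the periodic range, only that it contain $\pi$ consecutive integers beyond $N_0$. It suffices to take $e$ with $p^e\geq\pi$ and $n_0p^e\geq N_0+\pi$; since $n_0\geq 1$, both are achievable for $e\gg 0$, and then the last $\pi$ integers of the interval do the job.
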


Thus, as it was the case for regularities, we obtain  the following corollary. 

\begin{corollary}[{\cite{de2021blowup}}]
Assume $R$ is as in Definition \ref{DefFpureFilt}. If $I$ is as in Example \ref{theIdeals} (1)-(5), the limit 
$$\lim\limits_{n\to\infty} \depth(R/I_n)$$ exists and is equal to $\min\{\depth(I_{n})\}$.
\end{corollary}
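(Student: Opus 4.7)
The statement follows as a direct specialization of the preceding theorem, so my plan is essentially to verify that its two hypotheses are met for each of the ideal classes listed in Example \ref{theIdeals} (1)--(5), and then invoke the theorem. Concretely, the filtration in question is $\I = \{I^{(n)}\}_{n \in \NN}$, and the two things I need are: (a) $\I$ is an $F$-pure filtration in the sense of Definition \ref{DefFpureFilt}, and (b) the symbolic Rees algebra $\R(\I) = \bigoplus_{n \in \NN} I^{(n)} t^n$ is Noetherian. Given these, the theorem immediately yields both the existence of $\lim_{n\to\infty}\depth(R/I^{(n)})$ and the identification of this limit with $\min\{\depth(R/I^{(n)}) \mid n \in \NN\}$.

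For (a), there is nothing to do: Example \ref{theIdeals} (1)--(5) asserts precisely that each of the five classes of ideals is symbolic $F$-pure, which by Definition \ref{DefFpureFilt} means that $\{I^{(n)}\}_{n \in \NN}$ is an $F$-pure filtration. So the entire argument reduces to justifying the Noetherianity hypothesis (b) for each class. For squarefree monomial ideals this is classical: the symbolic Rees algebra of any monomial ideal is finitely generated (as already used implicitly in Example \ref{theFiltr}(1) via the valuation description, and cited for arbitrary monomial ideals through \cite[Theorem 3.2]{herzog2007}). For the determinantal-type ideals in (2)--(5) --- generic, symmetric, Pfaffian, and Hankel --- one appeals to the structural results from the literature showing that the symbolic Rees algebras in these cases are finitely generated $R$-algebras; in many of these cases one even has that $\R(\I)$ is itself $F$-pure (or strongly $F$-regular in certain ranges), as noted in the discussion following the previous corollary.

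The main potential obstacle, then, is the bookkeeping for (b) in cases (2)--(5): one needs to point to the correct generation statements in the literature (or in the companion preprint \cite{de2021blowup}) that certify Noetherianity of the symbolic Rees algebra for each family. Once that is in place, the proof itself is a one-line application of the preceding theorem, so I would present it exactly that way: cite the symbolic $F$-purity statements from Example \ref{theIdeals} (1)--(5), cite the Noetherianity statements for the symbolic Rees algebras of the same examples, and then invoke the theorem to conclude both the existence of the limit and its equality with the minimum.
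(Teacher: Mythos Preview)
Your proposal is correct and matches the paper's approach: the paper presents this as an immediate consequence of the preceding theorem (``Thus, as it was the case for regularities, we obtain the following corollary''), relying on the symbolic $F$-purity stated in Example \ref{theIdeals} (1)--(5) together with the Noetherianity of the corresponding symbolic Rees algebras. Your identification of Noetherianity as the only nontrivial bookkeeping, and your citations for it (classical for monomial ideals, literature/\cite{de2021blowup} for the determinantal families), are exactly what is needed.
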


Using the splitting  introduced in Definition \ref{defSplitting}, we obtain the following result in arbitrary characteristic. 

\begin{theorem}\label{thmValu}
Let $R=\KK[x_1,\ldots, x_d]$ be a polynomial ring over an arbitrary field $\KK$. Let $\underline{\nu}=\nu_1,  \cdots, \nu_r$ be normalized monomial valuations on $R$ and for each $n\in \NN$ let   $I_n(\underline{\nu})$ be  as  in Lemma \ref{lemmaMonVal}. Then, 
\begin{enumerate}
\item[$(1)$] $\depth(I_{n}(\underline{\nu}))\ls  \depth(I_{\lceil\frac{n}{m}\rceil}(\underline{\nu}) )$ for every $n,m \in \ZZ_{>0}$.
\item[$(2)$] 
The limit $\lim\limits_{n\to\infty} \depth(R/I_{n}(\underline{\nu}))$ exists and is equal to $\min\{\depth(R/I_{n}(\underline{\nu}))\}$
\end{enumerate}
In particular, this result holds for the ideals in Example \ref{monFilt}.
\end{theorem}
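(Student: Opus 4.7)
The plan is to carry out the same kind of argument used for $F$-pure filtrations, but with the Frobenius splitting replaced by the $R$-linear splitting $\Phi^R_m:R^{1/m}\to R$ of Definition \ref{defSplitting}, which exists in any characteristic. Lemma \ref{lemmaMonVal} will play the role of the defining property of an $F$-pure filtration. I will prove both statements for the depths of the quotients $R/I_n(\underline{\nu})$; this is equivalent to the stated form because $R$ is Cohen--Macaulay and each $I_n(\underline{\nu})$ is a proper nonzero ideal, so $\depth(I_n(\underline{\nu}))=\depth(R/I_n(\underline{\nu}))+1$.

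For part (1), fix $n,m\in\ZZ_{>0}$ and set $k=\lceil n/m\rceil$, so that $n=(k-1)m+j$ for some $1\ls j\ls m$. Lemma \ref{lemmaMonVal} then gives $\Phi^R_m\bigl((I_n(\underline{\nu}))^{1/m}\bigr)=I_k(\underline{\nu})$. Conversely, for any monomial generator $x^\alpha$ of $I_k(\underline{\nu})$ we have $\nu_i(x^{m\alpha})=m\nu_i(x^\alpha)\gs mk\gs n$ for every $i$, so $x^{m\alpha}\in I_n(\underline{\nu})$ and therefore $x^\alpha=(x^{m\alpha})^{1/m}\in(I_n(\underline{\nu}))^{1/m}$. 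Hence the inclusion $\iota:R\hookrightarrow R^{1/m}$ maps $I_k(\underline{\nu})$ into $(I_n(\underline{\nu}))^{1/m}$ and descends to an $R$-linear map $\overline{\iota}:R/I_k(\underline{\nu})\to R^{1/m}/(I_n(\underline{\nu}))^{1/m}$. Composing with the map induced by $\Phi^R_m$ gives the identity on $R/I_k(\underline{\nu})$, so $R/I_k(\underline{\nu})$ is an $R$-direct summand of $R^{1/m}/(I_n(\underline{\nu}))^{1/m}$; consequently,
$$\depth\bigl(R/I_k(\underline{\nu})\bigr)\gs \depth_R\bigl(R^{1/m}/(I_n(\underline{\nu}))^{1/m}\bigr).$$
To identify the right-hand side with $\depth(R/I_n(\underline{\nu}))$, note that $R^{1/m}$ is a free and hence faithfully flat $R$-module; change of rings yields $\depth_R(M)=\depth_{R^{1/m}}(\m R^{1/m},M)$ for every $R^{1/m}$-module $M$, and this last quantity equals $\depth_{R^{1/m}}(\m^{1/m},M)$ since $\m^{1/m}$ and $\m R^{1/m}$ have the same radical in $R^{1/m}$. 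Applying this to $M=R^{1/m}/(I_n(\underline{\nu}))^{1/m}$ and using the $\KK$-algebra isomorphism $\psi:R^{1/m}\xrightarrow{\sim} R$ sending $x_i^{1/m}\mapsto x_i$, which carries $(I_n(\underline{\nu}))^{1/m}$ to $I_n(\underline{\nu})$ and $\m^{1/m}$ to $\m$, gives $\depth_R(R^{1/m}/(I_n(\underline{\nu}))^{1/m})=\depth(R/I_n(\underline{\nu}))$ and hence part (1).

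For part (2), applying part (1) with $m=n$ shows that $\{\depth(R/I_n(\underline{\nu}))\}_{n\gs 1}$ is bounded above by $\depth(R/I_1(\underline{\nu}))$, so $d^*:=\min_{n\gs 1}\depth(R/I_n(\underline{\nu}))$ exists and is attained at some $n^*\gs 1$. If $n^*=1$ the previous bound already gives $\depth(R/I_n(\underline{\nu}))\ls d^*$ for all $n\gs 1$; otherwise, for $n\gs n^*(n^*-1)$ the interval $[n/n^*,\,n/(n^*-1))$ has length at least $1$ and hence contains a positive integer $m$ with $\lceil n/m\rceil=n^*$, so part (1) yields $\depth(R/I_n(\underline{\nu}))\ls\depth(R/I_{n^*}(\underline{\nu}))=d^*$. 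Combined with minimality, this gives $\depth(R/I_n(\underline{\nu}))=d^*$ for $n\gg 0$, so the limit exists and equals $d^*$.

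The main technical point is the change-of-rings identification $\depth_R(R^{1/m}/(I_n(\underline{\nu}))^{1/m})=\depth(R/I_n(\underline{\nu}))$, which adapts to arbitrary characteristic the module-level computation that underlies the $F$-pure case; the rest of the argument is a direct translation of the $F$-pure proof once Definition \ref{defSplitting} and Lemma \ref{lemmaMonVal} are in place.
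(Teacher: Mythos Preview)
Your argument is correct and, for part (1), follows essentially the same route as the paper: use the splitting $\Phi^R_m$ of Definition~\ref{defSplitting} together with Lemma~\ref{lemmaMonVal} to exhibit $R/I_{\lceil n/m\rceil}(\underline{\nu})$ as an $R$-direct summand of $R^{1/m}/(I_n(\underline{\nu}))^{1/m}$, and then identify the depth of the latter with $\depth(R/I_n(\underline{\nu}))$ via the $\KK$-algebra isomorphism $R^{1/m}\cong R$. This is exactly the translation of \cite[Proposition~4.9]{de2021blowup} that the paper invokes.

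Where you differ is in part (2). The paper first notes that the Rees algebra $\R(\I)$ of the filtration $\{I_n(\underline{\nu})\}$ is Noetherian \cite[Corollary~9.2.1]{HSIC}, \cite{herzog2007}, and then appeals to the argument of \cite[Theorem~4.10]{de2021blowup}, which combines the depth inequality with eventual periodicity of depths coming from Noetherianity (cf.\ Remark~\ref{remDepths}(3)). You instead exploit that part (1) is available for \emph{every} $m\in\ZZ_{>0}$, not just powers of $p$, and give a direct combinatorial argument: once the minimum $d^*$ is attained at some $n^*$, for $n\gg 0$ one can choose $m$ with $\lceil n/m\rceil=n^*$ and conclude. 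This is a genuine simplification --- your proof of (2) needs no Noetherianity hypothesis on $\R(\I)$ at all. The paper's route has the advantage of being a uniform template that also covers the $F$-pure setting of \cite{de2021blowup} (where only $m=p^e$ is available and Noetherianity is essential), while yours is more elementary and shows that the conclusion of (2) holds for \emph{any} filtration satisfying the inequality of (1) for all $m$.
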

 \begin{proof}
We note that if  $\R(\I)$ is Noetherian, where $\I=\{I_n(\underline{\nu})\}_{n\in \NN}$ \cite[Corollary 9.2.1]{HSIC}, \cite[Theorem 1.1, Corollary 1.2]{herzog2007}. The proof now follows similar  to   \cite[Proposition 4.9 and Theorem 4.10]{de2021blowup}.
 \end{proof}

We obtain the following corollary.

\begin{corollary}
With the notation in Theorem \ref{thmValu} we have $\depth(I_{1}(\underline{\nu}))\gs  \depth(I_{2}(\underline{\nu}) )\gs  \depth(I_{3}(\underline{\nu}) ).$
\end{corollary}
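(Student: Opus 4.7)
The plan is to deduce both inequalities as direct instances of Theorem \ref{thmValu}(1), which states that $\depth(I_{n}(\underline{\nu})) \ls \depth(I_{\lceil n/m\rceil}(\underline{\nu}))$ for every $n,m \in \ZZ_{>0}$. The strategy is simply to realize the indices $1$ and $2$ as ceilings of $2/m$ and $3/m$ for appropriately chosen integers $m$, so no new ingredient beyond the theorem is needed.

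First I would obtain the inequality $\depth(I_{1}(\underline{\nu})) \gs \depth(I_{2}(\underline{\nu}))$ by applying Theorem \ref{thmValu}(1) with $n=2$ and $m=2$; since $\lceil 2/2\rceil = 1$, the theorem specializes to exactly this comparison. Next, for $\depth(I_{2}(\underline{\nu})) \gs \depth(I_{3}(\underline{\nu}))$, I would apply Theorem \ref{thmValu}(1) with $n=3$ and $m=2$; since $\lceil 3/2\rceil = 2$, this gives the desired inequality. Chaining the two yields the corollary.

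There is essentially no technical obstacle in either step, as both reduce to a direct substitution. What is mildly subtle, and perhaps worth a remark in the final write-up, is that this trick does \emph{not} automatically extend to a monotone chain beyond index $3$: to derive $\depth(I_{n-1}(\underline{\nu})) \gs \depth(I_{n}(\underline{\nu}))$ from the theorem one needs an integer $m$ with $\lceil n/m\rceil = n-1$, which requires $n/(n-1)\ls m < n/(n-2)$, and this interval contains no integer once $n \gs 4$. Thus the corollary records precisely the range where Theorem \ref{thmValu}(1) directly forces a decreasing chain of depths, and further monotonicity (if true) would require additional input such as the strict $F$-pure filtration framework of Definition \ref{DefFpureFilt}.
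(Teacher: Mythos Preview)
Your proof is correct and is exactly the intended argument: the paper states the corollary without proof, as it follows immediately from Theorem \ref{thmValu}(1) by the very substitutions $n=2,m=2$ and $n=3,m=2$ that you describe. Your remark explaining why the theorem alone cannot push the chain past index $3$ is accurate and a nice addition.
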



\section{Injectivity of maps between $\Ext$ and local cohomology modules}

Throughout this section, $\k$ denotes a field, and $S=\mathbb{k}[x_1,\ldots,x_d]$ is a polynomial ring over $\k$. Unless otherwise specified, we intend $S$ to be standard graded, that is $\deg(x_i)=1$ for all $i$.  Musta\c{t}\v{a} proved the following result about maps between $\Ext$ and local cohomology modules for Frobenius-like powers of squarefree monomial ideals:

\begin{theorem}[{\cite[Theorem 1.1]{MustataLC}}] \label{thm Mustata}
Let $I \subseteq S$ be a squarefree monomial ideal. Given a minimal monomial generating set $m_1,\ldots,m_t$ of $I$, for any $n \in \ZZ_{>0}$ let $I^{[n]} = (m_1^n,\ldots,m_t^n)$ be its $n$-th Frobenius-like power. The natural map
\[
\Ext^i_S(S/I^{[n]},S) \to H^i_I(S) = \lim_{j \to \infty} \Ext^i_S(S/I^{[j]},S)
\]
is injective for all $n \in \ZZ_{>0}$ and all $i \in \ZZ$.
\end{theorem}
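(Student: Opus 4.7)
The natural map is the canonical map $\psi_n: \Ext^i_S(S/I^{[n]}, S) \to H^i_I(S)$ into the direct limit $\varinjlim_j \Ext^i_S(S/I^{[j]}, S)$; its transition maps are $\pi_{n,m}: \Ext^i_S(S/I^{[n]}, S) \to \Ext^i_S(S/I^{[m]}, S)$ for $m \geq n$, induced by the surjections $S/I^{[m]} \twoheadrightarrow S/I^{[n]}$. A module maps injectively into a filtered colimit if and only if every transition map out of it is injective, so $\psi_n$ is injective if and only if every $\pi_{n,m}$ with $m \geq n$ is. The plan is therefore to prove injectivity of each transition map $\pi_{n,m}$.

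Since each $I^{[n]}$ is a monomial ideal, $\Ext^i_S(S/I^{[n]}, S)$ is $\ZZ^d$-graded and every $\pi_{n,m}$ is homogeneous of degree zero; injectivity can thus be checked on each multigraded component $\Ext^i_S(S/I^{[n]}, S)_\beta$. To describe these components concretely, I would use the Taylor resolution of $S/I^{[n]}$: its free summands are indexed by subsets $F \subseteq \{1, \ldots, t\}$, with the summand at $F$ sitting in multidegree $\lcm\{m_j^n : j \in F\}$. The squarefreeness of the $m_j$'s gives the crucial identity $\lcm\{m_j^n : j \in F\} = n \cdot \alpha_F$, where $\alpha_F := \lcm\{m_j : j \in F\}$ is a $0/1$-vector independent of $n$. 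Consequently, the Taylor complex of $I^{[n]}$ is the Taylor complex of $I$ with all multidegree shifts scaled by $n$ and differentials decorated by $n$-th powers of the original monomial entries.

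At the cochain level, a lift of $S/I^{[m]} \twoheadrightarrow S/I^{[n]}$ sends the Taylor generator at $F$ in $T_\bullet(I^{[m]})$ to $x^{(m-n)\alpha_F}$ times the corresponding generator in $T_\bullet(I^{[n]})$, so $\pi_{n,m}$ on each $F$-summand of $\Hom_S(T_\bullet(-), S)$ is multiplication by $x^{(m-n)\alpha_F}$. Restricting to multidegree $\beta$, each cochain group becomes a direct sum over those $F$ with $\beta + n\alpha_F \in \NN^d$ (respectively $\beta + m\alpha_F \in \NN^d$) of one-dimensional $\k$-vector spaces, and the cochain-level map $\pi_{n,m}$ is an injection given by monomial isomorphisms on each summand present in both the source and the target.

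The final and most delicate step is passing from this cochain-level injection to an injection on cohomology. My plan is to identify the degree-$\beta$ cochain complex with the simplicial (co)chain complex, over $\k$, of an explicit subcomplex $\Delta^{(n)}_\beta$ of the full simplex on vertex set $\{1,\ldots,t\}$, where $F \in \Delta^{(n)}_\beta$ precisely when $\beta + n\alpha_F \in \NN^d$; the map $\pi_{n,m}$ then becomes an inclusion $\Delta^{(n)}_\beta \hookrightarrow \Delta^{(m)}_\beta$. Injectivity at the level of reduced (co)homology should follow by producing a retraction of this inclusion at the simplicial level, giving a split injection on cohomology. Constructing this retraction from the combinatorics of $\beta,\, n,\, m$, and the exponent vectors $\alpha_F$ is the main obstacle I anticipate. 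The squarefreeness of $I$ is essential throughout: it is what makes $\alpha_F$ a $0/1$-vector and lets the complex $\Delta^{(n)}_\beta$ depend on $n$ only through a simple threshold condition on the coordinates of $\beta$, a structure that collapses if $I$ is not squarefree.
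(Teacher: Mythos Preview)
Your approach is sound but takes a genuinely different route from the paper. The paper argues via prime-characteristic methods: a squarefree monomial ideal defines an $F$-split ring by Fedder's criterion; $F$-split rings are cohomologically full; cohomological fullness passes to $S/I^{[n]}$ along the flat Frobenius-like endomorphism $\varphi_n$; and being cohomologically full is equivalent to the injectivity of $\Ext^i_S(S/I^{[n]},S)\to H^i_I(S)$. This is more conceptual, applies verbatim to any ideal defining an $F$-pure ring, and fits the survey's theme, but as written it is restricted to characteristic $p>0$, with characteristic zero handled separately via Du~Bois singularities. Your combinatorial argument through the Taylor resolution is closer to Musta\c{t}\v{a}'s original, works uniformly over any field, and is specific to monomial ideals.

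The obstacle you anticipate in fact dissolves once you push your threshold observation one step further. Fix $\beta$ and set $N=\{k:\beta_k<0\}$. If some $k\in N$ has $\beta_k<-n$, then no $F$ satisfies $\beta+n\alpha_F\in\NN^d$, the degree-$\beta$ cochain complex for $I^{[n]}$ vanishes, and injectivity is trivial. Otherwise every $k\in N$ has $-n\le\beta_k<0$, and then $F$ contributes if and only if $(\alpha_F)_k=1$ for all $k\in N$, a condition independent of $n$. Hence for $m\ge n$ the two degree-$\beta$ cochain complexes are indexed by the \emph{same} collection of subsets, and your cochain-level map is an isomorphism on each one-dimensional summand, therefore an isomorphism on cohomology; no retraction needs to be built. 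One terminological wrinkle: the set $\Delta^{(n)}_\beta$ you define is closed under taking supersets, not subsets, so it is not literally a simplicial complex (its complement in $2^{[t]}$ is), but this does not affect the argument.
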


Motivated in part by this result and by the notion of $F$-full rings in prime characteristic the first author, together with Dao and Ma, introduced the notion of cohomologically full rings \cite{DDSM}.

By making use of the desirable cohomological properties of cohomologically full rings, Conca and Varbaro were able to settle an important conjecture due to Herzog \cite{CDNG,HR}:

\begin{theorem}[{\cite[Theorem 1.2]{CV}}] \label{thm Conca Varbaro}
Let $<$ be a monomial order on $S$, and $I$ be a homogeneous ideal such that $\IN_<(I)$ is squarefree. Then
\[
\dim_K(H^i_\m(S/I)_j) = \dim_K(H^i_\m(S/\IN_<(I))_j) \quad \text{ for all } i,j \in \ZZ.
\]
In particular, the extremal Betti numbers of $I$ and $\IN_<(I)$ coincide. 
As a consequence, $\depth(S/I) = \depth(S/\IN_<(I))$ and $\reg(S/I) = \reg(S/\IN_<(I))$.
\end{theorem}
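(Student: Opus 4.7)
My plan is to set up the standard Gröbner one-parameter flat deformation of $I$ and reduce the theorem to a statement about how local cohomology of the total space specializes to the two fibers. By Theorem \ref{thm initial weight}, I can replace the monomial order $<$ by a weight $\omega \in (\ZZ_{>0})^d$ with $\IN_\omega(I) = \IN_<(I)$. Following Remark \ref{RemNotation}, put $T = S[t]$, $J = {\rm hom}_\omega(I) \subseteq T$, and $R = T/J$. Then $R$ is a flat $\k[t]$-algebra with $R/(t-1)R \cong S/I$ and $R/tR \cong S/\IN_<(I)$. Crucially, since $I$ is standard-homogeneous, $J$ is homogeneous with respect to \emph{both} the weight grading $(\deg x_i = \omega_i,\ \deg t = 1)$ and the standard grading $(\deg x_i = 1,\ \deg t = 0)$. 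In particular $R$ is bigraded and each standard-graded piece $[R]_j$ is a $\k[t]$-module.

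The central technical claim I aim to prove is that the local cohomology $H^i_\m(R)$ is a flat $\k[t]$-module for every $i$, where $\m = (x_1,\ldots,x_d)R$ -- this is the \emph{fiber-full} property of the degeneration. Since $\k[t]$ is a PID and each $[H^i_\m(R)]_j$ carries the standard grading with $t$ in degree zero, this is equivalent to showing that $t$ acts injectively on each standard-graded piece. Once fiber-fullness is established, the long exact local cohomology sequence attached to $0 \to R \xrightarrow{t-a} R \to R/(t-a)R \to 0$, for $a \in \{0,1\}$, together with $t$-regularity on both $H^i_\m(R)$ and $H^{i+1}_\m(R)$, yields isomorphisms $[H^i_\m(R)]_j / (t-a) \cong [H^i_\m(R/(t-a)R)]_j$ in every standard degree. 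A finitely generated flat module over $\k[t]$ is free, so its fiber dimensions at $t = 0$ and at $t = 1$ coincide, giving the desired equality $\dim_\k [H^i_\m(S/I)]_j = \dim_\k [H^i_\m(S/\IN_<(I))]_j$.

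To prove fiber-fullness I would exploit the favorable nature of the special fiber. Because $\IN_<(I)$ is squarefree, $S/\IN_<(I)$ is a Stanley-Reisner ring, and Musta\c{t}\v{a}'s Theorem \ref{thm Mustata}, recast in the framework of \cite{DDSM}, shows that such rings are cohomologically full. The crucial deformation-theoretic input is that a graded flat $\k[t]$-family whose special fiber is cohomologically full is itself fiber-full over $\k[t]$; this is essentially the main point of \cite{VarbaroFiberFull}. The argument there compares the natural maps $\Ext^j_T(R/tR, T) \to H^j_\m(R/tR)$ on the fiber with their analogs upstairs on $R$, using the injectivity of the former (which is exactly cohomological fullness) to propagate injectivity of multiplication by $t$ on the local cohomology of $R$. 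The remaining assertions of the theorem are then formal: extremal Betti numbers are read off specific non-vanishing values of $\dim_\k [H^i_\m(S/I)]_j$ via graded local duality, while $\depth(S/I) = \min\{i : H^i_\m(S/I) \neq 0\}$ and $\reg(S/I) = \max\{i + j : [H^i_\m(S/I)]_j \neq 0\}$.

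The hard part, by far, is verifying fiber-fullness. All of the hypothesis -- squarefree initial ideal, Stanley-Reisner special fiber, Musta\c{t}\v{a}-style injectivity of $\Ext$ into local cohomology -- is consumed in this single step, and the rest of the proof is a formal unwinding via long exact sequences and local duality. A minor but real subtlety is keeping the two gradings on $R$ straight and checking that it is the \emph{standard}-graded $\k[t]$-module structure (rather than the weight-graded one) that governs the specialization to $S/I$ and to $S/\IN_<(I)$.
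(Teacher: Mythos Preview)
Your approach is correct and essentially the same as the paper's: both set up the one-parameter Gr{\"o}bner deformation $R=T/{\rm hom}_\omega(I)$, invoke cohomological fullness of the squarefree special fiber to obtain flatness of the relevant cohomology over $\k[t]$ (the paper states this as Proposition~\ref{prop Ext flat}, which is exactly the fiber-full input you cite from \cite{VarbaroFiberFull}), and then read off the equality of graded pieces by comparing the fibers at $t=0$ and $t=1$. The only cosmetic difference is that the paper works on the dual side, proving flatness of $\Ext^i_T(R,T)$ over $\k[t]$ and invoking graded local duality at the end, whereas you phrase the same step directly for $H^i_{(x_1,\ldots,x_d)}(R)$; the finiteness of $[H^i_{(x_1,\ldots,x_d)}(R)]_j$ over $\k[t]$ that you implicitly use is standard (e.g.\ \cite[Theorem~16.1.5]{BroSharp}), so nothing is lost.
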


The last result that we want to consider in this section is a Theorem of Nadi and Varbaro, who obtained relations between Lyubeznik numbers of an ideal and its initial ideal, provided the latter is squarefree. We here recall only the statement of this result; we refer to a later subsection for the definition and properties of the Lyubeznik numbers $\lambda_{ij}(-)$.

\begin{theorem}[{\cite[Lemma 2.1 \& Corollary 2.5]{VarbaroNadi}}]\label{thm VarbaroNadi}
Assume that $\k$ has characteristic $p>0$, and let $<$ be a monomial order on $S$. If $I \subseteq S$ is a homogeneous ideal such that $\IN_<(I)$ is squarefree, then 
\[
\lambda_{ij}(S/I) \leq \lambda_{ij}(S/\IN_<(I)) \quad \text{ for all } i,j \in \ZZ.
\]
\end{theorem}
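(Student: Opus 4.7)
The plan is to exploit the Gröbner deformation of Remark \ref{RemNotation} to produce a flat $\k[t]$-family $R = T/J$, with $T = S[t]$ and $J = {\rm hom}_\omega(I)$, whose fiber at $t = 1$ is $S/I$ and whose fiber at $t = 0$ is $S/\IN_<(I)$, and then to establish upper semicontinuity of Lyubeznik numbers along this family. Under the standing assumption $\chara(\k) = p > 0$, the Lyubeznik numbers admit the characterization $\lambda_{ij}(S/I) = \dim_\k \Ext^i_S(\k, H^{d-j}_I(S))$, and analogously for $\IN_<(I)$, so the inequality to prove becomes a comparison of these particular $\Ext$-dimensions between the two fibers.

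The core technical step is to check that the $T$-modules $H^{d-j}_J(T)$, and the iterated local cohomology built from them, behave well in the deformation. Concretely, I would aim to show that $H^{d-j}_J(T)$ is $\k[t]$-flat and satisfies base-change isomorphisms
\[
H^{d-j}_J(T)\otimes_{\k[t]} \k[t]/(t-a) \;\cong\; H^{d-j}_{I_a}(S) \qquad \text{for all } a \in \k,
\]
where $I_0 = \IN_<(I)$ and $I_a = I$ for $a \neq 0$. This is the property that the total space $R$ is \emph{fiber-full} with respect to local cohomology, and here one leverages the squarefree hypothesis crucially: since $S/\IN_<(I)$ is a Stanley-Reisner ring, hence $F$-pure, Theorem \ref{ThmGoebnerFinj} transports $F$-injectivity to the whole family, which is the standard hypothesis guaranteeing fiber-fullness of the local cohomology modules in the Gröbner degeneration.

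From base-change compatibility one then extracts semicontinuity: for each pair $(i,j)$, the function $a \mapsto \dim_\k \Ext^i_S(\k, H^{d-j}_{I_a}(S))$ on $\Spec(\k[t])$ is upper semicontinuous, since $\Ext$-dimensions can only jump on closed subsets of the base. Specializing from $t = 1$ to the closed point $t = 0$ yields $\lambda_{ij}(S/I) \leq \lambda_{ij}(S/\IN_<(I))$ for all $i,j \in \ZZ$, as desired.

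The main obstacle is the fiber-fullness step. Local cohomology does not in general commute with specialization, and without a careful argument the Hilbert functions of the $\Ext$ modules could jump unpredictably. The squarefreeness of $\IN_<(I)$ is indispensable exactly because it is what triggers $F$-injectivity of the special fiber and, by Theorem \ref{ThmGoebnerFinj}, of the full deformation $R$; this in turn is the property that forces $H^{d-j}_J(T)$ to be $\k[t]$-flat with the correct fibers, thereby enabling the semicontinuity argument and giving the claimed inequality of Lyubeznik numbers.
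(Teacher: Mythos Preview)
Your deformation setup is right, and semicontinuity along the $\k[t]$-family is indeed the mechanism. The gap is in the object you propose to control. You try to show that $H^{d-j}_J(T)$ is $\k[t]$-flat with fibers $H^{d-j}_{I_a}(S)$, and then run semicontinuity on $\dim_\k \Ext^i_S(\k, H^{d-j}_{I_a}(S))$. But $H^{d-j}_J(T)$ is not finitely generated over $T$, so neither the flatness claim nor the base-change isomorphisms fit the standard fiber-full framework you invoke; that framework (and Theorem \ref{ThmGoebnerFinj}, which concerns $H^i_\m(S/I)$) controls local cohomology at the maximal ideal, or dually the finitely generated modules $\Ext^j_T(R,T)$, not local cohomology supported at $J$. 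So the ``core technical step'' as you have written it is unsupported.

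The paper avoids this by never touching $H^{d-j}_J(T)$. Instead it works with the finitely generated double-$\Ext$ modules $E^{i,j}(T)=\Ext^{d-i}_T(\Ext^{d-j}_T(R,T),T)$. The squarefree hypothesis makes $S/\IN_<(I)$ cohomologically full, and Proposition \ref{prop Ext flat} then gives that $\Ext^{d-j}_T(R,T)$ is $\k[t]$-flat; this is enough to base-change the inner $\Ext$ and identify $E^{i,j}(T/(x))_{(0,*)}$ with $\Ext^{d-i}_S(\Ext^{d-j}_S(S/I_a,S),S)_0$ for $x=t-1,\,t$. A direct comparison of the free rank of $E^{i,j}(T)_{(0,*)}$ against its specializations yields
\[
\dim_\k\bigl(\Ext^{d-i}_S(\Ext^{d-j}_S(S/I,S),S)_0\bigr)\;\leq\;\dim_\k\bigl(\Ext^{d-i}_S(\Ext^{d-j}_S(S/\IN_<(I),S),S)_0\bigr).
\]
The passage to Lyubeznik numbers is then not definitional but goes through Zhang's inequality \eqref{Eq Lyubeznik} on the left and the $F$-pure equality of Theorem \ref{thm Lyubeznik} on the right (using that $S/\IN_<(I)$ is $F$-split). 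If you want to repair your argument, replace the role of $H^{d-j}_J(T)$ by $\Ext^{d-j}_T(R,T)$ and route the comparison through $\Ext^{d-i}_S(\Ext^{d-j}_S(-,S),S)_0$ rather than through $\Ext^i_S(\k,H^{d-j}_{-}(S))$.
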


The goals of this section are to revise the definition and some of the main properties of cohomologically full rings, following recent treatment of this subject \cite{DDSM}. With that, we present proofs of Theorems \ref{thm Mustata}, \ref{thm Conca Varbaro} and \ref{thm VarbaroNadi} by exploiting the fact that squarefree monomial ideal and their Frobenius powers define of cohomologically full rings \cite{DDSM}. 

\subsection{Cohomologically full rings}
We start by giving the definition of cohomologically full rings in our setup. The definition is more general, and we refer the interested reader to \cite{DDSM} for more details. 
\begin{definition} \label{DefnCF} Let $S=\mathbb{k}[x_1.\ldots,x_d]$ with the standard grading, $\m=(x_1,\ldots,x_d)$, and $I \subseteq S$ be a homogeneous ideal. Then $S/I$ is $i$-cohomologically full if, for every homogeneous ideal $J \subseteq I$ such that $\sqrt{J} = \sqrt{I}$, the induced map $H^i_\m(S/J) \to H^i_\m(S/I)$ is surjective. The ring $S/I$ is cohomologically full if it is $i$-cohomologically full for every $i \in \NN$.
\end{definition}

We point out that the notion of cohomologically full ring is very much related to that of ring with liftable local cohomology \cite{KK} and fiber full ring \cite{VarbaroFiberFull,Yu,Yairon1,Yairon2}.

\begin{remark} \label{remark CF independent presentation} Even if the definition of cohomologically full depends on the presentation of the ring as a quotient $S/I$ of a polynomial ring $S$, it can be shown that being cohomologically full is independent of the presentation. In other words, if $R=S/I$ is $i$-cohomologically full, and $R$ can also be presented as $S'/I'$, where $S'=\mathbb{k}[y_1,\ldots,y_{d'}]$ and $I'$ is a homogeneous ideal of $S'$, then $S'/I'$ is also cohomologically full \cite[Proposition 2.1]{DDSM}.
\end{remark}

The connection between the notion of cohomologically full ring and Musta\c{t}\v{a}'s problem lies in the next proposition, which is a direct application of graded local duality \cite[Theorem 3.6.19]{BrHe}.

\begin{proposition}[{\cite[Proposition 2.1]{DDSM}}] \label{propCF}
Let $S=\k[x_1,\ldots,x_d]$ with the standard grading, and $I \subseteq S$ be a homogeneous ideal. The following conditions are equivalent:
\begin{enumerate}
\item $S/I$ is $i$-cohomologically full.
\item For every homogeneous ideal $J \subseteq I$ such that $\sqrt{J} = \sqrt{I}$ the natural map $\Ext^{d-i}_S(S/I,S) \to \Ext^{d-i}_S(S/J,S)$ is injective.
\item For every family of ideals $\{I_n\}_{n \in \NN}$ of $S$ such that $I_0=I$, $I_{n+1} \subseteq I_n$ for all $n$ and which is cofinal with the the family $\{I^n\}$ of ordinary powers of $I$, such that the natural map $\Ext^{d-i}_S(S/I,S) \to \Ext^{d-i}_S(S/I_n,S)$ is injective for every $n \in \NN$.
\item The natural map $\Ext^{d-i}_S(S/I,S) \to H^{d-i}_I(S)$ is injective.
\end{enumerate}
\end{proposition}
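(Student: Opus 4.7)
\medskip

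The plan is to leverage graded local duality, which converts questions about surjectivity of maps on local cohomology into questions about injectivity of dual maps on $\Ext$. Recall that for a finitely generated graded $S$-module $M$, graded local duality gives a natural isomorphism $H^i_\m(M) \cong \Hom_\k(\Ext^{d-i}_S(M,S(-d)),\k)$, and Matlis duality $(-)^\vee = \Hom_\k(-,\k)$ is a faithfully exact contravariant functor on graded modules with finite-dimensional components. In particular, a graded map $\varphi$ between such modules is surjective if and only if $\varphi^\vee$ is injective.

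\textbf{Step 1: (1) $\Leftrightarrow$ (2).} Given a homogeneous ideal $J\subseteq I$ with $\sqrt{J}=\sqrt{I}$, the surjection $S/J \twoheadrightarrow S/I$ induces a map $H^i_\m(S/J) \to H^i_\m(S/I)$, which under graded local duality is (up to the twist by $S(-d)$) the Matlis dual of the natural map $\Ext^{d-i}_S(S/I,S) \to \Ext^{d-i}_S(S/J,S)$. Hence surjectivity of the former is equivalent to injectivity of the latter, giving (1) $\Leftrightarrow$ (2).

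\textbf{Step 2: (2) $\Rightarrow$ (3) $\Rightarrow$ (4).} For (2) $\Rightarrow$ (3), any family $\{I_n\}$ as in (3) satisfies $I_n \subseteq I_0 = I$, and cofinality with $\{I^n\}$ forces $\sqrt{I_n}=\sqrt{I}$, so (2) applies to each $I_n$. For (3) $\Rightarrow$ (4), I would use the standard identification $H^{d-i}_I(S) \cong \varinjlim_n \Ext^{d-i}_S(S/I_n,S)$ for any such cofinal filtration, and the compatible system of maps $\alpha_n : \Ext^{d-i}_S(S/I,S) \to \Ext^{d-i}_S(S/I_n,S)$ induces the map $\alpha$ of (4). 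If $\alpha(x)=0$, then by definition of direct limit the element $\alpha_n(x)$ is killed by some transition map $\Ext^{d-i}_S(S/I_n,S) \to \Ext^{d-i}_S(S/I_m,S)$ for some $m \geq n$; but this transition map sends $\alpha_n(x)$ to $\alpha_m(x)$, so $\alpha_m(x)=0$, and injectivity of $\alpha_m$ forces $x=0$.

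\textbf{Step 3: (4) $\Rightarrow$ (2).} For any homogeneous $J \subseteq I$ with $\sqrt{J}=\sqrt{I}$, we have $H^{d-i}_I(S) = H^{d-i}_J(S)$, and the natural maps assemble into a commutative triangle
\[
\Ext^{d-i}_S(S/I,S) \longrightarrow \Ext^{d-i}_S(S/J,S) \longrightarrow H^{d-i}_I(S),
\]
whose composite is the map in (4). Since (4) asserts this composite is injective, the first arrow is injective as well, which is (2).

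The main subtlety I anticipate is Step 2: one must verify carefully that $H^{d-i}_I(S)$ is indeed the direct limit of $\Ext^{d-i}_S(S/I_n,S)$ for a cofinal descending family $\{I_n\}$ (not just for the family of ordinary powers), and that the transition maps agree with the naturally induced ones. This reduces to the observation that cofinality in the poset of ideals of $S$ yields an isomorphism of the two direct limits, together with the standard identification $H^j_I(S)=\varinjlim \Ext^j_S(S/I^n,S)$. The remaining verifications are routine applications of the formal properties of graded local duality and of direct limits of graded modules.
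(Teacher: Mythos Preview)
Your proof is correct and follows essentially the same approach as the paper: both use graded local duality to pass between surjectivity on $H^i_\m$ and injectivity on $\Ext^{d-i}_S$, both establish (3) $\Rightarrow$ (4) via the direct-limit description of $H^{d-i}_I(S)$, and both obtain the remaining implication by factoring the map of (4) through $\Ext^{d-i}_S(S/J,S)$. The only organizational difference is that you prove (1) $\Leftrightarrow$ (2) as a single biconditional and close the circle via (4) $\Rightarrow$ (2), whereas the paper runs the cycle (1) $\Rightarrow$ (2) $\Rightarrow$ (3) $\Rightarrow$ (4) $\Rightarrow$ (1) and invokes local duality again at the end; your Step~3 factorization (using $H^{d-i}_I(S)=H^{d-i}_J(S)$) is in fact slightly cleaner than the paper's choice of $k$ with $I^k\subseteq J$.
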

\begin{proof}
Assume (1). Applying graded local duality \cite[Section 13.4]{BroSharp} we obtain that the map $\Ext^{d-i}_S(S/I,S(-d)) \to H^{d-i}_I(S/J,S(-d))$ is injective, where $\omega_S \cong S(-d)$ is the graded canonical module of $S$. Using that $\Ext^{d-i}_S(S/I,S(-d)) \cong \Ext^{d-i}_S(S/I,S)(-d)$, and applying the exact functor $- \otimes_S S(d)$ that shifts degrees by $d$, we obtain the statement of (2).

We have that (2) implies (3).

Assuming (2), we have that the map $\Ext^{d-i}_S(S/I,S) \to \lim\limits_{n \to \infty} \Ext^{d-i}_S(S/I_n,S) \cong H^{d-i}_I(S)$ is injective, and (3) is proved.

Finally, assume (4), and let $J \subseteq I$ be any homogeneous ideal such that $\sqrt{J} = \sqrt{I}$. Let $k \in \NN$ be such that $I^k \subseteq J$. Since $H^{d-i}_I(S) \cong \lim\limits_{n \to \infty} \Ext^{d-i}_S(S/I^n,S)$, and because of our choice of $k$, the map $\Ext^{d-i}_S(S/I,S) \to H^{d-i}_I(S)$ can be factored as the composition
\[
\Ext^{d-i}_S(S/I,S) \to \Ext^{d-i}_S(S/J,S) \to \Ext^{d-i}_S(S/I^k,S) \to H^{d-i}_I(S).
\]
As the composition is injective, the first map is injective. Now applying the exact functor $- \otimes_S S(-d)$ that shifts degrees by $-d$, and graded local duality, we get that the map $H^i_\m(S/J) \to H^i_\m(S/I)$ is surjective, and (1) is proved.
\end{proof}

\subsection{Injectivity of maps from Frobenius-like powers of squarefree monomial ideals}

In light of Proposition \ref{propCF}, we can restate Musta\c{t}\v{a}'s result by saying that squarefree monomial ideals and their $n$-th Frobenius-like powers are cohomologically full. 

The strategy to prove Musta\c{t}\v{a}'s Theorem can now be divided into two steps: first, to show that squarefree monomial ideals define $F$-split rings in prime characteristic and Du Bois singularities in characteristic zero. Second, to show that $F$-split rings and Du Bois singularities are cohomologically full. In this article, we only focus on the prime characteristic setup. 

The fact that squarefree monomial ideals define $F$-split rings is a direct consequence of Fedder's Criterion, Theorem \ref{ThmFedderCriterion}, which states that $S/I$ is $F$-split if and only if $I^{[p]}:_S I$ is not contained in $\m^{[p]} = (x_1^p,\ldots,x_d^p)$. In our assumptions, let $u_1,\ldots,u_t$ be a minimal monomial generating set of $I$. Since each $u_i$ is squarefree, observe that the monomial $u=(x_1\cdots x_d)^{p-1}$ satisfies $uu_i \in (u_i^p)$. Therefore $u \in (I^{[p]}:_SI) \smallsetminus \m^{[p]}$, as desired. Now we show that $F$-split rings are cohomologically full.

\begin{theorem}[{\cite{SinghWalther,MaFiniteness}}] \label{thm Fsplit is CF}
Let $\k$ be a field of prime characteristic $p>0$, $S=\k[x_1,\ldots,x_d]$ with the standard grading, and $I \subseteq S$ be such that $S/I$ is $F$-split. Then $S/I$ is cohomologically full.
\end{theorem}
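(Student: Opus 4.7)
The plan is to verify condition (4) of Proposition \ref{propCF}, namely that the natural map $\Ext^{d-i}_S(S/I,S)\to H^{d-i}_I(S)$ is injective for every $i\in\NN$. Since the Frobenius-like powers $\{I^{[p^e]}\}_e$ are cofinal in the family of ordinary powers of $I$, I would first identify $H^{d-i}_I(S)=\varinjlim_{e}\Ext^{d-i}_S(S/I^{[p^e]},S)$; as filtered colimits preserve injections, this reduces the problem to showing that for every $e\geq 1$ the natural map $\pi_e^*:\Ext^{d-i}_S(S/I,S)\to \Ext^{d-i}_S(S/I^{[p^e]},S)$ induced by the surjection $\pi_e:S/I^{[p^e]}\twoheadrightarrow S/I$ is injective.

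The main tool will be Fedder's criterion (Theorem \ref{ThmFedderCriterion}) together with Remark \ref{Trace}: for each $e\ge 1$, the $F$-splitness of $S/I$ yields an $S$-linear map $\phi:F^e_*S\to S$ such that $\phi\bigl((I^{[p^e]})^{1/p^e}\bigr)\subseteq I$ and, after rescaling, $\phi(1^{1/p^e})=1$. This $\phi$ descends to an $S$-linear map $\tilde\phi:F^e_*(S/I^{[p^e]})\to S/I$, and also to the splitting $\bar\phi:F^e_*(S/I)\to S/I$ of Frobenius, with $\tilde\phi=\bar\phi\circ F^e_*(\pi_e)$. The crucial construction is the $S$-linear map $\tilde F^e:S/I\to F^e_*(S/I^{[p^e]})$ defined by $\bar r\mapsto (r^{p^e})^{1/p^e}$, which is well defined because $r\in I$ implies $r^{p^e}\in I^{[p^e]}$. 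A short computation using the $S$-linearity of $\phi$ and $\phi(1^{1/p^e})=1$ yields the key identity
\[
\tilde\phi\circ \tilde F^e=\mathrm{id}_{S/I},
\]
so that the identity of $S/I$ factors through $F^e_*(S/I^{[p^e]})$ at the level of $S$-modules. Applying $\Ext^{d-i}_S(-,S)$ turns this into a factorization of $\mathrm{id}_{\Ext^{d-i}_S(S/I,S)}$ through $\Ext^{d-i}_S(F^e_*(S/I^{[p^e]}),S)$.

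The final ingredient will be Grothendieck duality for the Frobenius morphism on the regular ring $S$, which identifies $\Ext^{d-i}_S(F^e_*M,S)$ with $F^e_*\Ext^{d-i}_S(M,S)$ without changing the underlying abelian group. Expanding the dualized identity factorization through the decomposition $\tilde\phi=\bar\phi\circ F^e_*(\pi_e)$ exhibits $\pi_e^*$ as a factor of an identity map on $\Ext^{d-i}_S(S/I,S)$, and I would use this to extract the required injectivity of $\pi_e^*$, thereby concluding the proof.

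The main obstacle is the careful bookkeeping needed to pass from the $S$-module identity $\tilde\phi\circ \tilde F^e=\mathrm{id}_{S/I}$ to injectivity of the specific abelian-group map $\pi_e^*$. The subtlety is that the $\Ext$-factorization decomposes as $\pi_e^*\circ \bar\phi^*$ (precomposed with the Frobenius twist coming from Grothendieck duality), so one must reconcile this with the natural map $\pi_e^*$ using the fact that the dual endomorphism $\bar\phi^*$ of $\Ext^{d-i}_S(S/I,S)$ interacts compatibly with the Frobenius action on local cohomology. This is where $F$-purity enters in an essential way: it implies $F$-injectivity of $S/I$, so Frobenius acts injectively on every $H^i_\m(S/I)$, and via graded local duality this translates into the surjectivity of the Frobenius-dual operator on $\Ext^{d-i}_S(S/I,S)$ needed to isolate $\pi_e^*$ as an injective constituent of the identity factorization.
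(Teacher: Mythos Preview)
Your factorization $\tilde\phi\circ\tilde F^e=\mathrm{id}_{S/I}$ is correct, and dualizing through $\tilde\phi=\bar\phi\circ F^e_*(\pi_e)$ together with Grothendieck duality does give
\[
\mathrm{id}_{\Ext^{d-i}_S(S/I,S)}=(\tilde F^e)^*\circ F^e_*(\pi_e^*)\circ \bar\phi^*.
\]
However, this only shows that $F^e_*(\pi_e^*)$ is injective on the \emph{image} of $\bar\phi^*$. To conclude that $\pi_e^*$ itself is injective you would need $\bar\phi^*:\Ext^{d-i}_S(S/I,S)\to F^e_*\Ext^{d-i}_S(S/I,S)$ to be surjective. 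Your proposed justification does not establish this: $F$-injectivity says the Frobenius action $\iota_*$ on $H^i_\m(R)$ is injective, and by graded local duality this gives surjectivity of $\iota^*$ on $\Ext$, not of $\bar\phi^*$. Dually, surjectivity of $\bar\phi^*$ would correspond to \emph{injectivity of the Cartier operator} $\bar\phi_*$ on $H^i_\m(R)$, which does not follow from $F$-splitting in the elementary way you suggest (indeed $\bar\phi_*\circ\iota_*=\mathrm{id}$ forces $\bar\phi_*$ to be surjective, not injective).

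The paper's proof addresses exactly this missing piece, but by a different route: rather than trying to control $\bar\phi^*$, it proves directly that $R=S/I$ is $F$-full, i.e., that $H^i_\m(R)$ is generated as an $R$-module by the image of Frobenius. The key is an Artinian descending-chain argument on the submodules $N_e=R\text{-span}\langle F_i^j(\eta):j\ge e\rangle$, together with the Cartier identity $\psi_i(fF_i(\eta))=\psi(f)\eta$ (not merely $\psi_i\circ F_i=\mathrm{id}$), to push the stabilization index down to $0$. Once $F$-fullness is known, the surjectivity of $H^i_\m(S/I^{[q]})\to H^i_\m(S/I)$ (equivalently, the injectivity of $\pi_e^*$) follows by factoring the surjection $S^{1/q}\otimes_S H^i_\m(R)\twoheadrightarrow H^i_\m(R)^{1/q}$ through $H^i_\m(S/I^{[q]})^{1/q}$. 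Your argument is salvageable only if you supply this $F$-fullness step; the appeal to $F$-injectivity alone does not suffice.
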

\begin{proof}
Let $R=S/I$ is $F$-split. If we consider the Frobenius map $F:R \to R$, then this induces an additive map $F_i:H^i_\m(R) \to H^i_\m(R)$ such that $F(r\eta) = r^pF(\eta)$ for all $r \in R$ and $\eta \in H^i_\m(R)$, called a Frobenius action. From this point of view, the fact that $R$ is $F$-split means that there is an additive map $\psi:R \to R$ such that $\psi(f^pg) = f\psi(g)$ for all $f,g \in R$, called a Cartier map. In turn, this induces additive maps $\psi_i:H^i_\m(R)\to H^i_\m(R)$ such that $\psi_i(f^p\eta) = f\psi_i(\eta)$ for all $f \in R$ and $\eta \in H^i_\m(R)$ and $\psi_i \circ F_i = {\rm id}_{H^i_\m(R)}$. A computation on the $\check{\text{C}}$ech complex shows that $\psi_i(fF_i(\eta)) = \psi(f)\eta$ for all $r \in R$ and $\eta \in H^i_\m(R)$. 

Now for $e \in \NN$ let $N_e = R$-span$\langle F_i^j(\eta) \mid j \geq e\rangle$. Since $N_0 \supseteq N_1 \supseteq \ldots$ is a descending chain of $R$-submodules of $H^i_\m(R)$, and the latter is Artinian, the chain stabilizes. Let $e_0$ be the smallest integer such that $F_i^{e_0}(\eta) \in N_{e_0+1}$. We claim that $e_0=0$. If not, then we can then find elements $f_1,\ldots,f_t$ such that $F_i^{e_0}(\eta) = \sum_{j=1}^t f_j F_i^{e_0+j}(\eta)$. Applying the map $\psi_i$ gives that
\[
F_i^{e_0-1}(\eta) = \psi_i \circ F_i^{e_0}(\eta) = \sum_{j=1}^t \psi_i(f_j F_i^{e_0+j}(\eta)) = \sum_{j=1}^t \psi(f_j) F_i^{e_0-1+j}(\eta) \in N_{e_0},
\]
contradicting the minimality of $e_0$. It follows that $H^i_\m(R) = R$-span$\langle F(H^i_\m(R)) \rangle = R$-span$\langle F^e(H^i_\m(R)) \rangle$ for every $e \in \NN$. Equivalently, the $R$-linear map $\beta_{e,i}: R^{1/q} \otimes_R H^i_\m(R) \to H^i_\m(R)^{1/q} \cong H^i_\m(R^{1/q})$, defined on basic tensors as $\beta_{e,i}(r^{1/q} \otimes \eta) = (rF_i^e(\eta))^{1/q}$ is surjective for every $q=p^e$ and every $i \in \NN$.

Now observe that the natural map $\alpha_{e,i}:S^{1/q} \otimes_S H^i_\m(R) \to R^{1/q} \otimes_R H^i_\m(R)$ is surjective by exactness of tensor products. It follows that the composition $\gamma_{e,i} = \beta_{e,i} \circ \alpha_{e,i}$ is surjective. Finally, with an argument analogous to the one above we have that $\gamma_{e,i}$ factors as the composition $S^{1/q} \otimes_S H^i_\m(S/I) \to H^i_\m(S^{1/q}/IS^{1/q}) \cong H^i_\m(S/I^{[q]})^{1/q} \to H^i_\m(S/I)^{1/q}$, where the last map is the one induced by the natural projection $S/I^{[q]} \to S/I$. As $\gamma_{e,i}$ is surjective, so is the natural map $H^i_\m(S/I^{[q]}) \to H^i_\m(S/I)$ for all $q=p^e$. Since $\{I^{[q]}\}$ is a descending family of ideals cofinal with the ordinary powers, it follows from Proposition \ref{propCF} that $R=S/I$ is cohomologically full.
\end{proof}



\begin{definition}
Let $\k$ be a field, and $S=\k[x_1,\ldots,x_n]$. Given an integer $n \geq 1$, we let $\varphi_n:S \to S$ be the $\k$-algebra homomorphism such that $\varphi_n(x_i) = x_i^n$. We call $\varphi_n$ the $n$-th Frobenius-like homomorphism on $S$.
\end{definition}

We observe that $\varphi_n$ is a flat map for all $n \geq 1$. Moreover, if $\k$ has characteristic $p>0$ and $n=p^e$ for some $e$, then $\varphi_{p^e}$ coincides with the $e$-th iteration of the Frobenius map on $S$. Observe that if $I\subseteq S$ is a monomial ideal, then $\varphi_n(I)S$ coincides with the ideal $I^{[n]}$ defined above in the context of Musta\c{t}\v{a}'s Theorem.

We now let $^nS$ be $S$ viewed as a module over itself through restriction of scalars via $\varphi_n$. If $M$ is a finitely generated graded $S$-module, and $S^m \stackrel{A}{\to} S^n \to M \to 0$ is a graded free presentation of $M$ with presentation matrix $A=(a_{ij}) \in M_{nm}(S)$, we let $\varphi^n_S(M)$ be the graded $S$-module whose presentation matrix is $A^{[n]} = (a_{ij}^n)$. Then $\varphi^n_S(-)$ defines a functor from the category of graded $S$-modules to itself. It can easily be checked that $\varphi^n_S(S) =S$ and that $\varphi^n_S(S/I)=S/\varphi_n(I)S$ for any homogeneous ideal $I \subseteq S$. Since $\varphi_n$ is flat, the functor $\varphi^n_S$ is exact.

\begin{proposition} \label{propCF flat} Let $\k$ be a field, and $S=\k[x_1,\ldots,x_d]$ with the standard grading, and let $\varphi_n$ be the $n$-th Frobenius-like homomorphism on $S$. Let $I \subseteq S$ be a homogeneous ideal such that $S/I$ is cohomologically full, and the family $\{\varphi_n(I)S\}_{n \in \ZZ_{>0}}$ is cofinal with the family of ordinary powers $\{I^n\}_{n \in \ZZ_{>0}}$. Then $S/\varphi_n(I)S$ is cohomologically full for all $n >0$. 
\end{proposition}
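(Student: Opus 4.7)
The plan is to verify Proposition \ref{propCF}$(4)$ for $R_n := S/\varphi_n(I)S$, namely that the natural comparison map
\[
\Ext^{d-i}_S(R_n, S) \longrightarrow H^{d-i}_{\varphi_n(I)S}(S)
\]
is injective for every $i$. The central observation I would rely on is that $\varphi_n$ is a flat ring homomorphism, since $S$ is a free module over $\varphi_n(S) = \k[x_1^n, \ldots, x_d^n]$ with basis $\{x_1^{\alpha_1}\cdots x_d^{\alpha_d} : 0 \leq \alpha_i < n\}$. Hence the base-change functor $\varphi^n_S(-) = - \otimes_{S, \varphi_n} S$ is exact and sends $S/I$ to $S/\varphi_n(I)S$. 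The strategy is then to apply $\varphi^n_S$ to the injection $\Ext^{d-i}_S(S/I, S) \hookrightarrow H^{d-i}_I(S)$, which holds by hypothesis and Proposition \ref{propCF}$(4)$, and to identify the result with the desired comparison map.

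To carry this out I would record two standard flat base-change identifications. For Ext of finitely presented modules, a finite graded free resolution $F_\bullet \to S/I$ base-changes to a finite graded free resolution $\varphi^n_S(F_\bullet) \to S/\varphi_n(I)S$ (by exactness of $\varphi^n_S$ and because $\varphi^n_S(S) = S$); dualizing yields a natural isomorphism $\varphi^n_S(\Ext^j_S(S/I, S)) \cong \Ext^j_S(S/\varphi_n(I)S, S)$ for all $j$. For local cohomology, the $\Cech$ complex on a generating set $f_1, \ldots, f_t$ of $I$ base-changes, by flatness and the compatibility of localization with tensor products, to the $\Cech$ complex on $\varphi_n(f_1), \ldots, \varphi_n(f_t)$, a generating set of $\varphi_n(I)S$; taking cohomology yields $\varphi^n_S(H^j_I(S)) \cong H^j_{\varphi_n(I)S}(S)$.

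Applying $\varphi^n_S$ to the hypothesized injection and using the two identifications, one obtains an injective map which, by naturality of the comparison Ext-to-local-cohomology transformation, coincides with the natural map $\Ext^{d-i}_S(R_n, S) \hookrightarrow H^{d-i}_{\varphi_n(I)S}(S)$. This verifies Proposition \ref{propCF}$(4)$ for $R_n$, and hence $R_n$ is cohomologically full. The step that most requires care is this last naturality identification; I would make it precise by writing $H^{d-i}_I(S) = \lim_{k \to \infty} \Ext^{d-i}_S(S/I^k, S)$, using that $\varphi^n_S$ commutes with filtered colimits (being a tensor product) and that $\varphi_n(I^k)S = (\varphi_n(I)S)^k$, so that the base-changed colimit genuinely computes $H^{d-i}_{\varphi_n(I)S}(S)$. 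The cofinality hypothesis enters implicitly: it guarantees $\sqrt{\varphi_n(I)S} = \sqrt{I}$ and that $\{(\varphi_n(I)S)^k\}_k$ is cofinal with the ordinary powers of $\varphi_n(I)S$, which is what allows the base-changed limit to be interpreted as the local cohomology supported at $\varphi_n(I)S$.
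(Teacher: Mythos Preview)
Your argument is correct and follows the paper's strategy: apply the exact base-change functor $\varphi^n_S$ to the injection guaranteed by Proposition~\ref{propCF}, then identify the result via flat base change for $\Ext$ and for local cohomology. The paper differs only in that it verifies characterization~(3) using the family $\{J_m=\varphi_m(I)S\}_m$ rather than~(4) directly; one consequence is that the cofinality hypothesis---whose role your closing paragraph misidentifies---is actually needed only in the paper's route (to make $\{J_{nm}\}_m$ an admissible family for~(3)) and plays no genuine role in yours.
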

\begin{proof}
Let $n>0$. For $m>0$ let $\varphi_m(I)S=J_m$. By assumption, the natural map 
$$\gamma_{m}:\Ext^i_S(S/I,S) \to \Ext^i_S(S/J_m,S)$$
 is injective for all $m>0$. By exactness of $\varphi^n_S$, it follows that 
 $$\varphi^n_S(\gamma_m):\varphi^n_S(\Ext^i_S(S/I,S))  \to \varphi^n_S(\Ext^i_S(S/J_m,S)$$ is injective for all $m>0$. By flatness of $\varphi_n$, we have that 
 $$\varphi^n_S(\Ext^i_S(S/I,S)) \cong \Ext^i_S(\varphi^n_S(S/I),\varphi^n_S(S)) = \Ext^i_S(S/J_n,S),$$ and similarly, $$\varphi^n_S(\Ext^i_S(S/J_m,S)) \cong \Ext^i_S(\varphi^n_S(S/J_m),\varphi^n_S(S)) = \Ext^i_S(S/J_{n+m},S).$$ 
 It follows that the natural map $\Ext^i_S(S/J_n,S) \to \Ext^i_S(S/J_{m},S)$ is injective for all $m>n$, and therefore $S/J_n$ is $i$-cohomologically full by Proposition \ref{propCF}. Since $i$ was arbitrary, it follows that $S/J_n$ is cohomologically full.
\end{proof}

We can finally recover Musta\c{t}\v{a}'s result in characteristic $p>0$.

\begin{corollary} \label{coroll Mustata CF} Let $\k$ be a field of prime characteristic $p>0$, $S=\k[x_1,\ldots,x_d]$ with the standard grading, and $I \subseteq S$ a squarefree monomial ideal. Then the map $\Ext^i_S(S/I^{[n]},S) \to H^i_I(S)$ is injective for all $i \in \ZZ$ and all $n \in \ZZ_{>0}$.
\end{corollary}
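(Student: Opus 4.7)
The plan is to assemble Musta\c{t}\v{a}'s theorem by chaining the results the paper has already developed: Fedder's criterion, the fact that $F$-split rings are cohomologically full, the Frobenius-like preservation of cohomological fullness, and the characterization of cohomological fullness via injectivity of $\Ext$-to-local-cohomology maps. Concretely, I want to show $S/I^{[n]}$ is cohomologically full for every $n\in\ZZ_{>0}$ and then apply Proposition~\ref{propCF}(4).

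First I would verify that $S/I$ itself is $F$-split. Let $u_1,\ldots,u_t$ be the minimal squarefree monomial generators of $I$, and set $u=(x_1\cdots x_d)^{p-1}$. For each $i$, since $u_i$ is squarefree, the monomial $uu_i$ is divisible by $u_i^p$, so $uu_i\in(u_i^p)\subseteq I^{[p]}$. Hence $u\in I^{[p]}:_S I$, while clearly $u\notin\m^{[p]}=(x_1^p,\ldots,x_d^p)$. Fedder's Criterion (Theorem~\ref{ThmFedderCriterion}) then gives that $S/I$ is $F$-pure, hence $F$-split by Remark~\ref{FpureFsplit}. Theorem~\ref{thm Fsplit is CF} then yields that $S/I$ is cohomologically full.

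Next I would verify that the family $\{I^{[n]}\}_{n\in\ZZ_{>0}}$ is cofinal with the ordinary powers $\{I^n\}_{n\in\ZZ_{>0}}$. The containment $I^{[n]}\subseteq I^n$ is immediate from $u_i^n\in I^n$. In the other direction, $\sqrt{I^{[n]}}=\sqrt{I}$ (both ideals have the same set of minimal monomial primes), so some power of $I$ lies in $I^{[n]}$; an explicit bound is $I^{nt}\subseteq I^{[n]}$ by the pigeonhole principle applied to any monomial which is a product of $nt$ generators. With cofinality in hand, Proposition~\ref{propCF flat}, applied to $\varphi_n$, gives that $S/\varphi_n(I)S=S/I^{[n]}$ is cohomologically full for every $n\in\ZZ_{>0}$.

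Finally, invoking Proposition~\ref{propCF}(4) for the ideal $I^{[n]}$, the natural map
\[
\Ext^i_S(S/I^{[n]},S)\to H^i_{I^{[n]}}(S)
\]
is injective for every $i\in\ZZ$. Since $\sqrt{I^{[n]}}=\sqrt{I}$, local cohomology depends only on the radical and we may identify $H^i_{I^{[n]}}(S)=H^i_I(S)$, completing the proof. There is essentially no genuine obstacle once the right path through the earlier results is identified; the only subtle point is to notice that Proposition~\ref{propCF flat} is the natural mechanism for transferring cohomological fullness from $S/I$ to all its Frobenius-like analogues, after which the cofinality check and the radical identification are routine.
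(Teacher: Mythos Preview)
Your proof is correct and follows essentially the same approach as the paper: show $S/I$ is $F$-split (hence cohomologically full by Theorem~\ref{thm Fsplit is CF}), check cofinality of $\{I^{[n]}\}$ with ordinary powers, apply Proposition~\ref{propCF flat} to conclude $S/I^{[n]}$ is cohomologically full, and then invoke Proposition~\ref{propCF}(4). The paper's proof is terser and leaves the final identification $H^i_{I^{[n]}}(S)=H^i_I(S)$ implicit, but the logical structure is the same.
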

\begin{proof}
By Theorem \ref{thm Fsplit is CF} we have that $S/I$ is cohomologically full. Since the family of ideals $\{I^{[n]} = \varphi_n(I)\}_{n \in \ZZ_{>0}}$ is cofinal with the family of ordinary powers of $I$, we have that $S/I^{[n]}$ is cohomologically full for all $n>0$.
\end{proof}

\begin{remark} If $\k$ has characteristic zero, we have already observed that if $I$ is squarefree then $S/I$ is Du Bois \cite[Theorem 6.1]{Schwede}. Since Du Bois singularities are cohomologically full \cite{MaSchwedeShimomoto}, by Proposition \ref{propCF flat} we have that $S/I^{[n]}$ is cohomologically full for all $n>0$ also in this case.
\end{remark}

\subsection{Squarefree initial ideals: the result of Conca and Varbaro} 

We provide a proof of this result, which summarizes the approaches of the original article \cite{CV}, and the one given by Varbaro  \cite{VarbaroFiberFull} through the notion of fibre-full modules.

The following is a key result in order to achieve our goal.



\begin{proposition} \label{prop Ext flat} Let $T=S[t] = \k[x_1,\ldots,x_d,t]$ with $\deg(x_i) > 0$ for all $i$ and $\deg(t)=1$. Let $J \subseteq T$ be a homogeneous ideal such that $t$ is a non-zero divisor on $T/J$. If $T/(J,t)$ is cohomologically full, then $\Ext^i_T(T/J,T)$ is flat over $\k[t]$ for all $i \in \NN$.
\end{proposition}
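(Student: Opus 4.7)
The plan is to reduce the statement to showing that $t$ is a non-zero-divisor on $M^i := \Ext^i_T(T/J, T)$ for every $i$, which by Remark \ref{rem flat k[t]} is equivalent to $\k[t]$-flatness. Write $R = T/J$ and $\bar R = T/(J,t)$, and for each $k \geq 1$ let $M^i_{(k)} := \Ker(t^k : M^i \to M^i(k))$, so that $\Gamma_t(M^i) = \bigcup_k M^i_{(k)}$ is the $t$-torsion submodule.

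The first step is to exploit cohomological fullness of $\bar R$: since $(J, t^n) \subseteq (J, t)$ have the same radical, Proposition \ref{propCF}(2) gives an injection
\[
\iota_n : \Ext^i_T(\bar R, T) \hookrightarrow \Ext^i_T(R/t^n R, T)
\]
for every $n \geq 1$ and $i \in \NN$, induced by the natural surjection $R/t^n R \twoheadrightarrow \bar R$. Next, since $t$ (hence $t^n$) is a non-zero-divisor on $R$, applying $\Hom_T(-, T)$ to $0 \to R(-n) \xrightarrow{t^n} R \to R/t^n R \to 0$ produces a long exact $\Ext$ sequence, from which one extracts the short exact sequence
\[
0 \longrightarrow (M^{i-1}/t^n M^{i-1})(n) \longrightarrow \Ext^i_T(R/t^n R, T) \longrightarrow M^i_{(n)} \longrightarrow 0.
\]

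The commutative square $R(-n) \xrightarrow{t^{n-1}} R(-1)$ (with $\mathrm{id}_R$ on $R$) relates the cases $n$ and $n=1$ and yields a ladder of these short exact sequences in which the middle vertical is $\iota_n$ and the left vertical is
\[
\mu_n : (M^{i-1}/tM^{i-1})(1) \longrightarrow (M^{i-1}/t^n M^{i-1})(n), \qquad [x] \longmapsto [t^{n-1} x].
\]
The Snake Lemma, together with the injectivity of $\iota_n$, forces $\Ker(\mu_n) = 0$. A direct computation (an element $[x]$ is in $\Ker(\mu_n)$ iff $t^{n-1}x \in t^n M^{i-1}$, iff $x \in tM^{i-1} + M^{i-1}_{(n-1)}$) identifies
\[
\Ker(\mu_n) \;\cong\; M^{i-1}_{(n-1)}\big/\bigl(M^{i-1}_{(n-1)} \cap tM^{i-1}\bigr),
\]
so $M^{i-1}_{(n-1)} \subseteq tM^{i-1}$ for every $n \geq 2$; letting $n \to \infty$ gives $\Gamma_t(M^{i-1}) \subseteq tM^{i-1}$.

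To conclude, take $x \in \Gamma_t(M^{i-1})$ and write $x = ty$ for some $y \in M^{i-1}$; any $t^k$ killing $x$ then kills $ty = x$, hence $t^{k+1}y = 0$, so $y \in \Gamma_t(M^{i-1})$. This yields $\Gamma_t(M^{i-1}) = t \cdot \Gamma_t(M^{i-1})$, and since $\Gamma_t(M^{i-1})$ is a finitely generated graded $T$-module with $t \in \m_T$, graded Nakayama's Lemma forces $\Gamma_t(M^{i-1}) = 0$ for all $i$; equivalently, $t$ is a non-zero-divisor on every $M^i$. I expect the main technical care to lie in correctly extracting the contravariant map $\mu_n$ together with its degree shifts, and in the explicit identification of $\Ker(\mu_n)$; the remainder is a routine Snake Lemma argument followed by graded Nakayama.
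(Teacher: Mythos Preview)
Your argument is correct and takes a genuinely different route from the paper. Both proofs begin the same way: cohomological fullness of $T/(J,t)$ makes the natural map $\Ext^i_T(R/(t),T)\to\Ext^i_T(R/(t^n),T)$ injective for every $n$. From there the paper passes through graded local duality to translate into a statement about $H^i_\m(R/(t))\to H^i_\m(R/(t^j))$, takes the direct limit, invokes a spectral sequence identification $\varinjlim_j H^i_\m(R/(t^j))\cong H^{i+1}_\m(R)$ (citing \cite{HMS,MaQuy}) to recognize the resulting map as the connecting homomorphism for $0\to R\xrightarrow{t}R\to R/(t)\to 0$, deduces surjectivity of $\cdot t$ on $H^i_\m(R)$, and finally dualizes back. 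You instead stay entirely on the $\Ext$ side: from the sequences $0\to R(-n)\xrightarrow{t^n}R\to R/(t^n)\to 0$ you extract the filtration pieces $M^{i-1}/t^nM^{i-1}$ and $M^i_{(n)}$, compare $n$ with $1$ via a ladder and the Snake Lemma, identify $\Ker(\mu_n)$ by hand, and finish with graded Nakayama. Your approach is more elementary and self-contained---no local duality, no limits, no spectral sequence, no external references---at the cost of a small explicit computation of $\Ker(\mu_n)$. The paper's detour through local cohomology, on the other hand, makes transparent the link with the surjectivity of $\cdot t$ on $H^i_\m(R)$, which is conceptually closer to the fiber-full viewpoint used later. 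One small remark on your final step: the version of Nakayama you need is simply that a nonzero finitely generated graded $T$-module has a minimal degree, which $tN=N$ with $\deg(t)=1$ immediately contradicts; you are not using $\m_T N=N$.
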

\begin{proof}
Let $R=T/J$. Since $R/(t)$ is cohomologically full the natural map $\Ext^i_T(R/(t),T) \hookrightarrow H^i_{(J,t)}(T)$ is injective for all $i \in \NN$. This map factors through the natural map $\alpha_{j}: \Ext^i_T(R/(t),T) \to \Ext^i_T(R/(t^j),T)$, therefore $\alpha_{j}$ is injective for all $j \geq 1$. As a consequence of the injectivity of $\alpha_{j}$ and the long exact sequence on $\Ext$ modules induced by the short exact sequence $0 \to R/(t^{j-1}) \to R/(t^j) \to R/(t) \to 0$, we have that the map $\beta_{j,j-1}: \Ext^i_T(R/(t^j),T) \to \Ext^i_T(R/(t^{j-1}),T)$ is surjective for all $j \geq 2$. Observe that $\beta_{j,1} = \beta_{2,1}\circ \beta_{3,2} \circ \ldots \circ \beta_{j,j-1}$, so that $\beta_{j,1}$ is also surjective. Let $\m=(x_1,\ldots,x_n,t)$. Applying graded local duality \cite[Section 13.4]{BroSharp} we obtain that the map $H^i_\m(R/(t)) \to H^i_\m(R/(t^j))$ is injective for all $i \in \NN$ and all $j \geq 2$, and therefore the map $H^i_\m(R/(t)) \to \lim_{j \to \infty} H^i_\m(R/(t^j))$ is injective as well. A spectral sequence argument shows that $\lim_{j \to \infty} H^i_\m(R/(t^j)) \cong H^i_\m(H^1_{(t)}(R)) \cong H^{i+1}_\m(R)$, and the resulting map $H^i_\m(R/(t)) \to H^{i+1}_\m(R)$ is the connecting homomorphism on the long exact sequence of local cohomology induced by the short exact sequence $0 \to R \stackrel{\cdot t}{\to} R \to R/(t) \to 0$ \cite[Lemma 2.2]{HMS} and \cite[Proposition 3.3]{MaQuy}. The injectivity of such a map for all $i \in \NN$ gives that $H^i_\m(R) \stackrel{\cdot t}{\to} H^i_\m(R)$ is surjective for all $i \in \NN$. Finally, using again graded local duality, we conclude that $\Ext^i_T(R,T) \stackrel{\cdot t}{\to} \Ext^i_T(R,T)$ is injective for all $i \in \NN$, that is, $t$ is a non-zero divisor for $\Ext^i_T(R,T)$ for all $i \in \NN$. As already observed, since $\Ext^i_T(R,T)$ is a graded $\k[t]$-module, this is equivalent to $\Ext^i_T(R,T)$ being flat.
\end{proof}


Now assume that $I \subseteq S$ is a homogeneous ideal, and for a given weight $\omega \in \ZZ_{>0}^d$ let $J={\rm hom}_\omega(I)$. In this way, $R=T/J$ becomes a bi-graded $T$-module by giving each $x_i$ degree $(1,\omega_i)$, and by giving $t$ degree $(0,1)$. As a consequence, every module $\Ext^i_T(R,T)$ is bi-graded. We can write $\Ext^i_T(R,T) \cong \bigoplus_{j \in \ZZ} \Ext^i_T(R,T)_{(j,*)}$, where each $\Ext^i_T(R,T)_{(j,*)}$ is a finitely generated graded $\k[t]$-module. As such, 
\[
\Ext^i_T(R,T)_{(j,*)} \cong \k[t]^{\oplus a_{i,j}} \oplus \left( \bigoplus_{m \in \ZZ_{>0}} \left(\k[t]/(t^{m})\right)^{\oplus b_{i,j,m}}\right),
\]
where $a_{i,j}, b_{i,j,m}$ are non-negative integers. Let $b_{i,j} = \sum_{m \in \ZZ_{>0}} b_{i,j,m}$.

\begin{remark} \label{remark shift ext}
Let $I \subseteq S$ be an ideal, $\omega \in \NN^d$ be a weight, and $J={\rm hom}_\omega(I) \subseteq T=S[t]$. Since $t-1$ and $t$ are regular on $T$ and $T/(t) \cong T/(t-1) \cong S$, for all $i,j \in \ZZ$ we have that $\Ext^{i+1}_T(R/(t-1),T)_{(j,*)} \cong \Ext^i_S(S/I,S)_j$ and $\Ext^{i+1}_T(R/(t),T)_{(j,*)} \cong \Ext^i_S(S/\IN_\omega(I),S)_j$ \cite[Lemma 3.1.16]{BH}.
\end{remark}

We are finally ready to prove Theorem \ref{thm Conca Varbaro}. 
\begin{proof}[Proof of Theorem \ref{thm Conca Varbaro}]
The claim about depth and regularity follow from the equality between dimensions, as both these invariants are measured by graded local cohomology modules supported at $\m$. Furthermore, by graded local duality \cite[Section 13.4]{BroSharp} and because squarefree monomial ideals define cohomologically full rings, it suffices to show that if $S/\IN_<(I)$ is cohomologically full then $\dim_\k(\Ext^i_S(S/I,S)_j) = \dim_\k(\Ext^i_S(S/\IN_<(i),S)_j)$ for all $i,j \in \ZZ$. 

Let $\omega \in \ZZ_{>0}^d$ be a weight such that $\IN_<(I) = \IN_\omega(I)$, let $J={\rm hom}_\omega(I) \subseteq T=S[t]$ and let $R=T/J$. Let $x$ be either $t-1$ or $t$. Applying the functor $\Hom_T(-,T)$ to the short exact sequence $0 \to R \stackrel{\cdot x}{\to} R \to R/(x) \to 0$ induces a long exact sequence of $\Ext$-modules. For every $j \in \ZZ$, we then have a long exact sequence of finitely generated $\k[t]$-modules as follows:
\[
\xymatrix{
\ldots  \ar[r]^-{\alpha_{i,j}} & \Ext^{i+1}_T(R,T)_{(j,*)} \ar[r] & \Ext^{i+1}_T(R/(x),T)_{(j,*)} \ar[r] & \Ext^{i+1}_T(R,T)_{(j,*)} \ar[r]^-{\alpha_{i+1,j}} & \ldots,
}
\]
which gives short exact sequences $0 \to {\rm coker}(\alpha_{i,j}) \to \Ext^{i+1}_T(R/(x),T)_{(j,*)} \to \ker(\alpha_{i+1,j}) \to 0$. If $x=t-1$, then ${\rm coker}(\alpha_{i,j}) \cong \k^{a_{i,j}}$, while $\ker(\alpha_{i+1,j}) = 0$. Thus, by Remark \ref{remark shift ext}, we have that $\dim_\k(\Ext^{i+1}_T(R/(t-1),T)_{(j,*)}) = \dim_\k(\Ext^{i}_S(S/I,S)_j) = a_{i,j}$.

Now assume that $x=t$. Since $S/I \cong T/(J,t)$ is cohomologically full, by Proposition \ref{prop Ext flat} we have that $\Ext^i_T(R,T)$ is a flat graded $\k[t]$-module for every $i \in \ZZ$. It follows that $b_{i,j} = 0$ for all $i,j \in \ZZ$, and therefore ${\rm coker}(\alpha_{i,j}) \cong \k^{a_{i,j}}$ and $\ker(\alpha_{i+1,j}) = 0$. Again by Remark \ref{remark shift ext}, we conclude that $\dim_\k(\Ext^{i+1}_T(R/(t),T)_{(j,*)}) = \dim_\k(\Ext^{i}_S(S/\IN_\omega(I),S)_j) = a_{i,j}$, and the theorem follows.
\end{proof}

\subsection{Lyubeznik numbers of ideals with squarefree initial ideal}
Throughout this subsection $\k$ is assumed to have characteristic $p>0$. We first recall the definition of Lyubeznik numbers.
\begin{definition} Let $I \subseteq S = \k[x_1,\ldots,x_d]$ be an ideal. For $i,j \in \NN$ we define the $(i,j)$-Lyubeznik number of $S/I$ as
\[
\lambda_{ij}(S/I) = \dim_\k(\Ext^i_S(\k,H^{d-j}_I(S))).
\]
\end{definition}
Lyubeznik \cite{LyuDMod} proved that such invariants only depend on the quotient $S/I$, and not on its presentation. Lyubeznik numbers detect several important information about a ring $R$ as above. For instance, they are related to connectivity of the punctured spectrum or of the projective variety associated to $R$ \cite{Walther2,DSGNB}, and more generally to high connectivity \cite{ZhangHighest}. They are also related to singular cohomology \cite{GarciaSabbah} and {\'e}tale cohomology \cite{B-B}. From a more combinatorial point of view, the Lyubeznik numbers of a Stanley-Reisner ring are topological invariants of the geometric realization of the associated simplicial complex \cite{AMY}. The interested reader can find more details in a survey by Witt, Zhang and the third author on this topic \cite{SurveyLyuNum}.

In the case of rings defined by squarefree monomial ideals, a result of Yanagawa \cite[Theorem 1.1]{YStr} relates Lyubeznik numbers with vector-space dimensions of certain $\Ext$ modules. This is related to previous work of Zhang \cite[Theorem 1.2]{ZhangLyubeznikProjective}, which in prime characteristic directly implies that
\begin{equation}\label{Eq Lyubeznik}
\lambda_{ij}(S/I)\leq\dim_\k(\Ext^{d-i}_S(\Ext^{d-j}_S(S/I,S),S))_0).
\end{equation}
Yanagawa's result was later extended by the first and the last authors, together with Grifo, from rings defined by squarefree monomial ideals to $F$-pure rings.

\begin{theorem}[{\cite[Theorem C]{DSGNB}}]\label{thm Lyubeznik} Let $I \subseteq S=\k[x_1,\ldots,x_d]$ be a homogeneous ideal. 
If $S/I$ is $F$-pure, then 
\[
\lambda_{ij}(S/I) =\dim_\k(\Ext^{d-i}_S(\Ext^{d-j}_S(S/I,S),S))_0).
\]
\end{theorem}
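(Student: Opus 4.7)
The plan is to promote Zhang's inequality \eqref{Eq Lyubeznik} to the stated equality under the $F$-pure hypothesis, by establishing the reverse bound
\[
\lambda_{ij}(S/I)\;\ge\;\dim_\k \Ext^{d-i}_S\bigl(\Ext^{d-j}_S(S/I,S),S\bigr)_0.
\]
I would first translate the right-hand side into local cohomology. By graded local duality \cite[Section 13.4]{BroSharp} applied to the finitely generated graded $S$-module $N:=\Ext^{d-j}_S(S/I,S)$, one has $\dim_\k\Ext^{d-i}_S(N,S)_0=\dim_\k H^i_\m(N)_{-d}$. Since $\lambda_{ij}(S/I)=\dim_\k \Ext^i_S(\k,H^{d-j}_I(S))$ by definition, the problem becomes showing
\[
\dim_\k \Ext^i_S\bigl(\k,H^{d-j}_I(S)\bigr)\;\ge\;\dim_\k H^i_\m(N)_{-d}.
\]

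Next I would use the natural injection $\iota\colon N\hookrightarrow H^{d-j}_I(S)$, which exists because $S/I$ is $F$-pure (hence cohomologically full) by Theorem~\ref{thm Fsplit is CF} combined with Proposition~\ref{propCF}. Applying $\Ext^\bullet_S(\k,-)$ to the short exact sequence $0\to N\to H^{d-j}_I(S)\to C\to 0$ produces a long exact sequence, and the inequality will follow once one shows that the map $\Ext^i_S(\k,N)\to \Ext^i_S(\k,H^{d-j}_I(S))$ is injective on the graded component corresponding to $H^i_\m(N)_{-d}$ under local duality. This in turn amounts to a vanishing of the connecting term $\Ext^{i-1}_S(\k,C)$ in the appropriate graded degree.

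The key input for this vanishing is the Frobenius action on $H^{d-j}_I(S)$, which in the $F$-pure setting is controlled by compatible $S$-linear splittings supplied by Fedder's Criterion (Theorem~\ref{ThmFedderCriterion}). Iterated Frobenius scales internal degrees by $p^e$, so any non-trivial obstructing class in the relevant graded piece of $\Ext^{i-1}_S(\k,C)$ would, under these iterates, produce infinitely many distinct Bass classes in $\Ext^i_S(\k,H^{d-j}_I(S))$ and contradict the finiteness of the Lyubeznik numbers $\lambda_{ij}(S/I)$ proved in \cite{LyuDMod}. The Fedder splittings are precisely what guarantee these iterated classes remain non-zero rather than being killed by the Frobenius system.

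The main obstacle will be the delicate bookkeeping among three structures — the internal grading on $S$, the homological degree in the $\Ext$ functors, and the Frobenius scaling — together with verifying that the graded component singled out by graded local duality is exactly the one that the $F$-pure splittings detect. This is where the full strength of $F$-purity, rather than only $F$-injectivity, is indispensable: only $F$-purity produces the $R$-linear retractions of the Frobenius-like inclusions needed to transport classes coherently across every Frobenius iterate.
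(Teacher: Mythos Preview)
Your proposal has a genuine gap at the point where you try to transport the quantity $\dim_\k H^i_\m(N)_{-d}$ into $\Ext^i_S(\k,H^{d-j}_I(S))$ via the map $\Ext^i_S(\k,N)\to\Ext^i_S(\k,H^{d-j}_I(S))$. The module $\Ext^i_S(\k,N)$ is not the same object as $H^i_\m(N)$, nor does graded local duality give an identification of any particular graded piece of $\Ext^i_S(\k,N)$ with $H^i_\m(N)_{-d}$; local duality relates $H^i_\m(N)_{-d}$ to $\Ext^{d-i}_S(N,S)_0$, which is an entirely different functor in the second variable. So even if the map induced by $N\hookrightarrow H^{d-j}_I(S)$ on $\Ext^i_S(\k,-)$ were injective in some degree, this would not produce the inequality you want. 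The subsequent ``infinitely many Bass classes'' argument is also not solid: Frobenius on $H^{d-j}_I(S)$ is not degree-preserving in the internal grading, and the Lyubeznik module $\Ext^i_S(\k,H^{d-j}_I(S))$ is a single finite-dimensional vector space with no internal grading that Frobenius could spread classes across, so the contradiction you sketch does not materialize.

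The paper's argument avoids this detour entirely. It uses the \emph{precise} formula of Zhang, not just the inequality \eqref{Eq Lyubeznik}: $\lambda_{ij}(S/I)$ equals the dimension of the Frobenius-stable part $\bigcap_e F^e\bigl(\Ext^{d-i}_S(\Ext^{d-j}_S(R,S),S)_0\bigr)$. The $F$-split hypothesis then enters in one clean stroke: the splitting $R^{1/p}\to R$ makes the natural Frobenius action $F$ on $\Ext^{d-i}_S(\Ext^{d-j}_S(R,S),S)$ injective (using $\Ext^{d-i}_S(\Ext^{d-j}_S(R^{1/p},S),S)\cong\bigl(\Ext^{d-i}_S(\Ext^{d-j}_S(R,S),S)\bigr)^{1/p}$). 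Restricted to the finite-dimensional degree-zero piece, an injective additive endomorphism is bijective, so the stable part is the whole space and equality follows immediately. The missing idea in your attempt is precisely this: work with the Frobenius action directly on the double-Ext module in degree zero, rather than trying to factor through the cokernel $C$ of $N\hookrightarrow H^{d-j}_I(S)$.
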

\begin{proof}
Extending the base field affects neither side of the equality, therefore we may assume that $\k$ is a perfect field and that $R=S/I$ is $F$-split. Let $F$ be the Frobenius action on $\Ext^{d-i}_S(\Ext^{d-j}_S(R,S),S)$ induced by applying the functor $\Ext^{d-i}_S(\Ext^{d-j}_S(-,S),S)$ to the natural Frobenius map on $R$. Zhang \cite[Theorem 1.2]{ZhangLyubeznikProjective} proved that 
\[
\lambda_{ij}(S/I) = \dim_\k\left(\bigcap_{e\in\NN} F^e\left(\Ext^{d-i}_S(\Ext^{d-j}_S(R,S),S)_0\right)\right),
\]
that is, the $(i,j)$-th Lyubeznik number of $R$ equals the vector space dimension of the stable part of $\Ext^{d-i}_S(\Ext^{d-j}_S(R,S),S)_0$ under the action of Frobenius. Because $\k$ is perfect, the stable part is a $\k$-vector subspace of $\Ext^{d-i}_S(\Ext^{d-j}_S(R,S),S)_0$, and this shows the claimed inequality.

Now assume that $R$ is $F$-split, that is, the natural inclusion $R \hookrightarrow R^{1/p}$ splits. The induced map $\Ext^{d-i}_S(\Ext^{d-j}_S(R,S),S) \to \Ext^{d-i}_S(\Ext^{d-j}_S(R^{1/p},S),S)$ is then injective for all $i,j \in \NN$. As $\Ext^{d-i}_S(\Ext^{d-j}_S(R^{1/p},S),S) \cong \left(\Ext^{d-i}_S(\Ext^{d-j}_S(R,S),S)\right)^{1/p}$ \cite[Lemma 6.1]{ZhangLyubeznikProjective}, we conclude that the natural Frobenius action $F$ on $\Ext^{d-i}_S(\Ext^{d-j}_S(R,S),S)$ is injective. As this action is graded, in particular we have that $F$ is injective on the degree zero part $\Ext^{d-i}_S(\Ext^{d-j}_S(R,S),S)_0$. Since $F$ is an injective endomorphism of the finitely generated $\k$-vector space $\Ext^{d-i}_S(\Ext^{d-j}_S(R,S),S)_0$, it must also be surjective, and an isomorphism. It follows that the whole vector space $\Ext^{d-i}_S(\Ext^{d-j}_S(R,S),S)_0$ is stable under the action of Frobenius, and the proposition follows.
\end{proof}

We are now ready to prove the Theorem of Nadi and Varbaro. We follow closely the strategy of the original proof \cite[Lemma 2.1 \& Corollary 2.5]{VarbaroNadi}. 
\begin{proof}[Proof of Theorem \ref{thm VarbaroNadi}]
Let $\omega \in \ZZ_{>0}$ be a weight such that $\IN_\omega(I) = \IN_<(I)$. Let $T=S[t]$, $J={\rm hom}_\omega(I)$, and let $R=T/J$. For $i,j \in \ZZ$ and a $T$-module $M$ we let $E^{i,j}(M) = \Ext^{d-i}_T(\Ext^{d-j}_T(R,T),M)$. We have that $E^{i,j}(T)$ is bi-graded, and $E^{i,j}(T)_{(r,*)}$ is a finitely generated graded $\k[t]$-module for any $r \in \ZZ$. As such, we can write it as 
\[
E^{i,j}(T)_{(r,*)} \cong \k[t]^{a_{i,j,r}} \oplus \left(\bigoplus_{m >0} (\k[t]/(t^m))^{b_{i,j,r,m}}\right)
\]
for some non-negative integers $a_{i,j,r},b_{i,j,r,m}$.

For $x=t-1$ or $x=t$ we have a short exact sequence $0 \to T \stackrel{\cdot x}{\to} T \to T/(x) \to 0$, and applying the functor $\Ext^{d-i}_T(\Ext^{d-j}_T(R,T),-)_{(r,*)}$ this induces a long exact sequence of finitely generated $\k[t]$ modules as follows:
\[
\xymatrix{
\ldots \ar[r]^-{\alpha_{i,j,r}} & E^{i,j}(T)_{(r,*)} \ar[r] & E^{i,j}(T/(x))_{(r,*)} \ar[r] & E^{i-1,j}(T)_{(r,*)} \ar[r]^-{\alpha_{i-1,j,r}} & \ldots,
}
\]
where $\alpha_{i,j,r}$ is the multiplication by $x$ on $E^{i,j}(T)_{(r,*)}$. We then have short exact sequences 
\[
\xymatrix{ 
0 \ar[r] & {\rm coker}(\alpha_{i,j,r}) \ar[r] & E^{i,j}(T/(x))_{(r,*)} \ar[r] & \ker(\alpha_{i-1,j,r}) \ar[r] & 0.
}
\]
If $x=t-1$, then ${\rm coker}(\alpha_{i,j,r}) \cong \k^{a_{i,j,r}}$, while $\ker(\alpha_{i-1,j,r}) = 0$, therefore $\dim_\k(E^{i,j}(T/(x))_{(r,*)}) = a_{i,j,r}$. On the other hand, if $x=t$, then ${\rm coker}(\alpha_{i,j,r}) \cong \k^{a_{i,j,r}} \oplus \left(\bigoplus_{m>0} \k^{b_{i,j,r,m}}\right)$, and thus we have $\dim_\k(E^{i,j}(T/(x)_{(r,*)})) \geq a_{i,j,r}$. Since $\IN_<(I)$ is squarefree, by Proposition \ref{prop Ext flat} we have that $\Ext^{d-j}_T(R,T)$ is a flat graded $\k[t]$-module, and thus $x$ is a non-zero divisor for it. It follows from \cite[Lemma 3.4]{Varbaro FiberFull} and the fact that $x$ is regular on $\Ext^{d-j}_T(R,T)$ that 
\begin{align*}
E^{i,j}(T/(x))_{(r,*)} & \cong \Ext^{d-i}_{T/(x)}(\Ext^{d-j}_T(R,T) \otimes_T T/(x),T/(x))_{(r,*)} \\
& \cong \Ext^{d-i}_{T/(x)}(\Ext^{d-j}_{T/(x)}(R/(x),T/(x)),T/(x))_{(r,*)} \\
& \cong \begin{cases} \Ext^{d-i}_S(\Ext^{d-j}_S(S/I,S),S)_r & \text{ if } x=t-1 \\ \Ext^{d-i}_S(\Ext^{d-j}_S(S/\IN_<(I),S),S)_r & \text{ if } x=t \end{cases}
\end{align*}
Therefore, for $r=0$ and using the inequalities obtained above we conclude that
\[
\dim_\k\left(\Ext^{d-i}_S(\Ext^{d-j}_S(S/I,S),S)_0\right) = a_{i,j,0} \leq \dim_\k\left(\Ext^{d-i}_S(\Ext^{d-j}_S(S/\IN_<(I),S),S)_0\right).
\]
Finally, since $S/\IN_<(I)$ is $F$-split, we conclude by Equation (\ref{Eq Lyubeznik}) that $\lambda_{i,j}(S/I) \leq \lambda_{i,j}(S/\IN_<(I))$.
\end{proof}

\section*{Acknowledgments}
This manuscript was inspired by the work that Rafael H. Villarreal has done connecting several areas of mathematics with commutative algebra. 
The second and third authors are also grateful for the generous mentorship that many Latin American algebrists have received from Rafael H. Villarreal.
We thank Carlos Espinosa-Valdéz, Delio Jaramillo-Velez, and Yuriko Pitones for careful reading of an earlier version of this manuscript.
\bibliographystyle{alpha}
\bibliography{References}

\end{document}